\documentclass{article}
\usepackage[12pt]{extsizes}
\usepackage [utf8]{inputenc}
\usepackage[T1]{fontenc}
\usepackage{amsmath,amssymb,amsthm}
\usepackage[all]{xy}
\usepackage{enumitem}
\usepackage{xcolor,mathrsfs} 
\definecolor{vertfonce}{rgb}{0.20, 0.46, 0.25}
\definecolor{rougefonce}{rgb}{0.64, 0.09, 0.20}
\usepackage[breaklinks=true,
            colorlinks=true,
            linkcolor=rougefonce,
            citecolor=vertfonce,
            hypertexnames=false]{hyperref}

\usepackage{autonum} 

\newtheorem{theo}{Theorem}[section]
\newtheorem{lemm}[theo]{Lemma}
\newtheorem{prop}[theo]{Proposition}
\newtheorem{coro}{Corollary}[theo]
\newtheorem{defi}[theo]{Definition}

\newtheorem{remark}[theo]{Remark}

\newcommand{\Vois}{\mathrm{Neigh}}
\newcommand{\Op}{\mathrm{Op}}
\newcommand{\Hol}{\mathrm{Hol}}
\newcommand{\loc}{\mathrm{loc}}
\newcommand{\Brg}{\mathrm{Brg}}
\newcommand{\Top}{\mathop{\textup{Top}}}

\newcommand{\w}{\mathrm{w}}
\newcommand{\abs}[1]{\left|#1\right|}
\newcommand{\norm}[1]{\left\|#1\right\|}
\newcommand{\tnorm}[1]{\|#1\|}
\newcommand{\Opbrg}{\Op^{\Brg}}
\newcommand{\lphi}{L^2_\Phi}
\newcommand{\dbar}{\overline\partial}
\newcommand{\DD}{\:\!\mathrm{d}}
\newcommand{\trsp}{\raisebox{.6ex}{${\scriptstyle t}$}}

\newcommand{\Cinf}{\mathscr{C}^\infty}
\newcommand{\spt}{\mathop{\textup{supp}}}
\newcommand{\adiag}{\textup{adiag}}
\newcommand{\phy}{\varphi}
\newcommand{\intint}{\int\!\!\!\!\int}

\newcommand{\RM}{\mathbb{R}}

\newcommand{\NM}{\mathbb{N}}
\newcommand{\CM}{\mathbb{C}}
\renewcommand{\O}{\mathcal{O}}
\newcommand{\h}{\hbar}
\newcommand{\N}{\mathcal{N}}
\newcommand{\pscal}[2]{\left(#1 | #2\right)}
\newcommand{\ham}[1]{\mathcal{X}_{#1}}
\newcommand{\hnegl}{\equiv_\Phi}

\title{Analytic Bergman operators in the semiclassical limit}

\author{Ophélie \textsc{Rouby}\footnote{Lycée Edouard Branly, 2, rue
    Porte Gayole 62321 Boulogne-sur-mer, France}, Johannes
  \textsc{Sjöstrand}\footnote{Université de Bourgogne, CNRS, IMB - UMR
    5584, BP 47870, F-21078 Dijon, France}~ and \textsc{V\~u Ng\d oc}
  San\footnote{Univ Rennes, CNRS, IRMAR - UMR 6625, F-35000 Rennes,
    France}}

\date{}

\begin{document}
\numberwithin{equation}{section}

\maketitle

\begin{abstract}
  Transposing the Berezin quantization into the setting of analytic
  microlocal analysis, we construct approximate semiclassical Bergman
  projections on weighted $L^2$ spaces with analytic weights, and show
  that their kernel functions admit an asymptotic expansion in the
  class of analytic symbols. As a corollary, we obtain new estimates
  for asymptotic expansions of the Bergman kernel on $\CM^n$ and for
  high powers of ample holomorphic line bundles over compact complex
  manifolds.
\end{abstract}
\maketitle

\section{Introduction}

Let $L\to X$ be a holomorphic line bundle over a closed complex
manifold $X$, and assume that $L$ is equipped with a positive
Hermitian metric.  The corresponding Chern form induces a Riemannian
metric on $X$, and the integrated scalar product on $L$ gives a
natural Hilbert space structure on sections of $L$. In this work we
will be interested in the so-called \emph{semiclassical limit} $L^k$
of high tensor powers of the line bundle $L$, where $\frac{1}{k}$
plays the role of the small Planck constant $\h$. The line bundle
$L^k$ is naturally equipped with the product Hermitian metric, and we
may consider the Hilbert space $L^2(X;L^k)$ of square-integrable
sections of $L^k$.  The orthogonal projection onto holomorphic
sections:
\[
\Pi_k : L^2(X;L^k) \to H^0(X; L^k)
\]
is called the Bergman projection. A central question in complex
geometry is to understand the asymptotic behavior, as $k\to+\infty$,
of the distributional kernel $K(x,y;k)$ of $\Pi_k$. The same problem
arises in the sister theory of the Szegö projection, for which one
considers the boundary of a strictly pseudo-convex domain in $\CM^n$.
After the pioneer works of Fefferman~\cite{fefferman74}, Boutet de
Monvel--Sjöstrand~\cite{boutet-sjostrand76} and
Kashiwara~\cite{kashiwara77}, their techniques have been transposed
over to compact complex manifolds.  In particular, thanks to the works
by Bouche~\cite{bouche90}, Tian~\cite{tian90}, and then
Catlin~\cite{catlin99} and Zelditch~\cite{zelditch98}, a complete
asymptotic expansion of the Bergman function (the norm of the Bergman
kernel on the diagonal) was given:
\[
\abs{K(x,x;k)}_{L^k_x} \sim {k^n}\left(b_0(x) + \frac{b_1(x)}{k} +
  \frac{b_2(x)}{k^2} + \cdots\right)
\]
where the coefficients $b_j$ are smooth functions on $X$, and the
symbol $\sim$ stands for asymptotic expansion with respect to the
powers of the small parameter $\frac{1}{k}$, in the $\Cinf$
topology. See for instance the expository works~\cite{berndtsson03},
\cite{ma-marinescu}, and the references therein.
In~\cite{zelditch98}, this expansion was used to obtain approximations
of arbitrary Kähler metrics by Fubini-Study metrics obtained via the
Kodaira embeddings; this result, which can be seen as a semiclassical
interpretation of some constructions by Donaldson~\cite{Donaldson96},
is now called the Tian-Yau-Zelditch theorem, see~\cite{Phong-Sturm07}.

Since then, there has been an intense activity on getting a better
understanding of the expansion of $K(x,y;k)$ --- including the
analysis away from the diagonal $x=y$, applying it to new more general
settings, and exploring new applications. For instance,
in~\cite{Song-Zelditch10}, the authors show that the local Bergman
expansion can yield approximation results on global geometric objects
like geodesics in the space of Kähler metrics. Recently, a very
interesting program about ``partial'' Bergman kernels has been
started, where one considers the projections on specific subspaces of
holomorphic functions, see~\cite{Zelditch-Zhou19}. The robustness of
the Bergman expansion manifests itself in the way that several authors
have been able to generalise it to the non-Kähler symplectic manifolds
(see~\cite{Charles2016} for a very general treatment,
and~\cite{kordyukov18} for the use of the Bochner Laplacian for
obtaining analogues of the spaces $H^0(X; L^k)$).

In this paper, we wish to consider the issue of the relationships
between the analyticity of the metric on $L$ and optimal estimates for
$b_k$ on or off the diagonal. The question has raised recent interest,
see for instance the articles~\cite{christ-13}, \cite{christ-13b},
\cite{hezari-lu-xu17}. In this last paper, the authors prove that, if
the metric is analytic (in which case the functions $b_k$ are analytic
on $X$), the estimate $\abs{\tilde b_k(x,y)} \leq C^k k!^2$ holds
locally uniformly near the diagonal in $X\times X$, where $\tilde b_k$
is the holomorphic extension of $b_k$.  But they also present the
conjecture formulated by Zelditch in 2014 that a stronger, more
natural estimate
\begin{equation}
  \label{equ:natural-analytic-estimate}
  \abs{\tilde b_k(x,y)} \leq C^k k!
\end{equation}
could hold, and relate various debates on this issue. This question
was also asked to us by S. Zelditch and L. Charles in 2014 and 2016.
One of our main results, Theorem~\ref{theo:line-bundle} below, settles
the conjecture in a positive way, showing that the more natural
version~\eqref{equ:natural-analytic-estimate} is correct and hence
that we have exponentially accurate approximations. As a consequence
of the existence of such asymptotics, the Bergman kernel admit lower
and upper bounds of the form $k^ne^{-k d^2(x,y)/C}$.  Recently, the
upper bound has been obtained independently by Hezari and
Xu~\cite{hezari-xu18}, and an alternative approach to Theorem
\ref{theo:line-bundle} was proposed
in~\cite{deleporte18,charles2019analytic,hezari2019property}.

But this result was not our unique goal.  In fact, our initial
motivation for undertaking this research has its roots in the spectral
theory of Berezin-Toeplitz operators. Starting from a prequantizable
Kähler manifold $X$, one can construct a Hermitian line bundle as
above, and define an algebra of operators extending the usual
geometric quantization scheme, see ~\cite{berezin75,BG}, and
also~\cite{charles-toeplitz,lefloch-book} and references therein.
Such operators are defined by a `symbol' $f$, which is a function on
$X$, through the formula
\[
T_f : u \mapsto \Pi_k (f u) : H^0(X;L^k) \to H^0(X;L^k)\,.
\]
In~\cite{rouby-17} it was conjectured that, for Berezin-Toeplitz
operators with analytic symbols on a Riemann surface, one has a very
accurate asymptotic description, in the semiclassical limit
$k\to+\infty$, of all individual eigenvalues, provided that the
operator is `nearly selfadjoint'. The conjecture was supported by the
proof of this result in the case of analytic pseudo-differential
operators acting on $L^2(\RM)$ or $L^2(S^1)$~\cite{rouby-17}, and,
under some additional geometric assumption (related to complete
integrability), on $L^2(\RM^2)$~\cite{melin-sjostrand-non,
  h-sj-04,h-sj-vn-07}.

Although the idea of transposing these results to the Berezin-Toeplitz
case is natural, analytic microlocal analysis was never applied to
general Berezin-Toeplitz operators, and there was a fundamental
obstacle to this. Namely, one should prove that the Bergman projection
$\Pi_k$, viewed as a complex Fourier integral operator, has an
analytic symbol with a suitable asymptotic expansion. Building the
necessary theory and finally proving this result constitutes the core
of this article, see Theorems~\ref{theom}, \ref{bcn4}
and~\ref{theo:line-bundle}. We hope that these results will allow to
improve other semiclassical estimates involving analytic Hamiltonians
like~\cite{hitr-mant-sjoe17}, and to reach phenomena that are
inherently exponentially small, such as tunneling; see for
instance~\cite{bonth-raym-vns19}.

\medskip

To conclude this introduction, we would like to give an informal
overview of the method. As mentioned above, it had been realized for a
long time that the asymptotic study of the Bergman kernel is tightly
related to semiclassical analysis,
see~\cite{boutet-sjostrand76,zelditch98}. More recently, other
approaches have been proposed that derive asymptotic expansions in a
more `elementary' (but still semiclassical) way, see for
instance~\cite{b-b-sj-08}. The techniques that we use in this article
also go back to the initial ideas, but in a more systematic and
natural way: we combine a fully microlocal approach with $L^2$
estimates, in order to obtain a transparent 'local-to-global'
principle. The first step is to construct an approximate Bergman
projection by means of analytic microlocal analysis, via a
quantization scheme which was initially advocated by
Charles~\cite{charles-toeplitz}, that we call Bargmann-Bergman (or
$\Brg$ for short) quantization. By its local nature, this step does
not require any geometric assumption on the underlying phase
space. The second step uses an $L^2$-analysis of these operators
(combined with the usual Hörmander $\dbar$ estimates) to show that, up
to an exponentially small error in terms of the semiclassical
parameter, the exact Bergman projection coincides with the
microlocally constructed one. Once this is established, the estimates
for the asymptotics of the Bergman kernel are a consequence of the
pseudo-differential calculus in analytic classes of symbols developed
in~\cite{sj-asterisque-82}. We have applied this second step to two
cases, namely $\CM^n$ (Section~\ref{sec:bcn}) and compact complex
manifolds (Section~\ref{sec:line-bundle}). It would be interesting to
apply the same program to more general non-compact geometries.

\subsection*{Organization of the article}

In section~\ref{sec:brg} we introduce the `$\Brg$' quantization on
weighted spaces of germs of holomorphic functions $H_{\Phi,x_0}$,
where $x_0\in\CM^n$ and $\Phi$ is an analytic, strictly
plurisubharmonic function defined near $x_0$
(Definition~\ref{defi:brg-quantization}). This particular form of
quantization is directly inspired by the well-known exact formula for
the Bergman projection in the setting of weighted $L^2$ spaces on
$\CM^n$, when the weight is quadratic
(Proposition~\ref{prop_defi_Brg_projection}); it was introduced by
Charles~\cite{charles-toeplitz} in the setting of $\Cinf$
Berezin-Toeplitz operators.

Section \ref{sec:proof} contains the main microlocal result of the
paper, namely: using an analytic Fourier integral operator, one
obtains a microlocal equivalence between the $\Brg$ quantization and
the `usual' (but complex) Weyl quantization (Theorem~\ref{theom}). The
proof consists in expressing this equivalence as a product of analytic
Fourier integral operators, and proving transverse intersection of the
underlying canonical relations.

In Section~\ref{sec:appr-bergm-proj}, we cast the microlocal result in
terms of approximate weighted $L^2$ spaces on nested domains. This
allows the construction of approximate Bergman projections
(Proposition~\ref{prop:approx_proj}), which are unique modulo an
exponentially small error (Proposition~\ref{prop:uniqueness}).

Sections~\ref{sec:bcn} and~\ref{sec:line-bundle} contain the main
applications to the asymptotics of the Bergman kernel. In
Section~\ref{sec:bcn} we treat the case of weighted $L^2$ spaces on
$\CM^n$, while Section~\ref{sec:line-bundle} deals with the Bergman
projection associated with a holomorphic Hermitian line bundle.

\vskip 1em

\paragraph{Acknowledgements.}
The question of proving the analytic behaviour of the Bergman kernel
was raised to one of us (J.S.) by Steve Zelditch and Maciej Zworski,
and independently to the three of us by Laurent Charles. We are
grateful to them for giving us the impulse to write this paper.  We
also thank Laurent Charles for explaining the anzatz he used
in~\cite{charles-toeplitz} which is at the origin of our $\Brg$
quantization. Finally, we are grateful to the referees for their
careful reading and suggestions.  Funding for O.R. was provided in
part by the Funda\c{c}\~ao para a Ci\^encia e a Tecnologia (FCT)
through project PTDC/MAT-CAL/4334/2014.

\tableofcontents

\section{\texorpdfstring{$\Brg$}{Brg} quantization}
\label{sec:brg}

\subsection{FBI-Bargmann transforms and Bergman kernels}

For the sake of completeness, and in order to introduce the relevant
notation, we discuss here the FBI-Bargmann transform, which is in fact
a generalization of the original Segal-Bargmann and FBI transforms
from~\cite{bargmann} and~\cite{bros-iagolnitzer} to the case of a
general strictly plurisubharmonic quadratic form $\Phi : \CM^n\to \RM$
(see Definition \ref{defi_plurisubharmonic_fonctions}). The original
case investigated by Bargmann corresponds to
$\Phi(z)= \frac{1}{2} \abs{z}^2$; the corresponding transform has been
used in various settings under different names: Bargmann-Segal, Gabor,
or wavepacket transforms. The general case was studied by several
authors; one can find a good account of the theory in the
book~\cite[Chapter 13]{zworski-book-12}. In~\cite{sj-asterisque-82},
\cite{sj-96} these transformations are treated as Fourier integral
operators and integrated into microlocal (semiclassical) analysis.

We present here the semiclassical version.  Let $ 0 < \hbar \leq 1$ be
the semiclassical parameter.  Without explicit notice, all constants
in this text are implicitly independent of $\h$.

The following definition was introduced in~\cite{sj-asterisque-82} and
specifically in~\cite[(1.3)]{sj-96} in the quadratic case, where the
constant $c_\phy$ was computed.  In the wording
of~\cite{zworski-book-12}:
\begin{defi} \label{defi_FBI_transform} Let $ \phi(z, x)$ be a
  holomorphic quadratic function on $ \CM^n \times \CM^n$ such that:
  \begin{enumerate}[label=\roman*)]
  \item $ \Im \left( \dfrac{\partial^2 \phi}{\partial x^2} \right)$ is
    a positive definite matrix;
  \item
    $ \det \left( \dfrac{\partial^2 \phi}{\partial x \partial z}
    \right) \neq 0$.
  \end{enumerate}
  The \textbf{FBI-Bargmann transform} associated with the function
  $\phi$ is the operator, denoted by $T_{ \phi}$, defined on the
  Schwartz space $ \mathscr{S}( \mathbb{R}^n)$ by:
  \[
  T_{ \phi} u(z) = c_{ \phi} \hbar^{-3n/4} \int_{ \mathbb{R}^n}
  e^{(i/\hbar) \phi(z,x)} u(x)\DD x, \quad z\in\CM
  \]
  where:
  \begin{equation} \label{formule_constante_c_phi} c_{ \phi} =
    \dfrac{1}{2^{n/2} \pi^{3n/4}} \dfrac{| \det \partial_x \partial_z
      \phi|}{(\det \Im \partial^2_x \phi )^{1/4}}.
  \end{equation}
\end{defi}
Let us define the complex linear canonical transformation
$\kappa_\phi$ by
\begin{align}
  \kappa_{ \phi}: \CM^n \times \CM^n 
  & \longrightarrow \CM^n \times \CM^n \\
  (x, - \partial_x \phi(z, x)) & \longmapsto (z, \partial_z \phi(z, x)).
\end{align}

\begin{defi} \label{defi_plurisubharmonic_fonctions} A function
  $ \Phi \in \mathscr{C}^2( \CM^n; \mathbb{R})$ is called
  \textbf{plurisubharmonic} (respectively \textbf{strictly
    plurisubharmonic}) if, for all $x \in \CM^n$, the matrix
  $ ( \partial^2_{x_j, \bar{x}_k} \Phi )_{j, k=1}^n$ is positive
  semidefinite (respectively positive definite).
\end{defi}
We shall often identify the matrix
$ ( \partial^2_{x_j, \bar{x}_k} \Phi )_{j, k=1}^n$ (where $j$ is the
line index and $k$ the column index) with the $(1,1)$-form
$\partial_{\bar x}\partial_x \Phi = \sum_{j,k} \partial^2_{x_j,
  \bar{x}_k} \Phi\, \DD \bar x_k \wedge \DD x_j $.
\begin{prop}[\cite{sj-96}]
  With the notation of Definition \ref{defi_FBI_transform}, define for
  $z \in \CM^n$:
  \begin{equation} \label{eq_defi_Phi} \Phi(z) := \max_{x \in
      \mathbb{R}^n} - \Im \phi(z, x) .
  \end{equation}
  Then $\Phi$ is a strictly plurisubharmonic quadratic function and
  $\kappa_{\phi}$ is a bijection from $ \mathbb{R}^{2n}$ to
  \begin{equation}
    \label{equ:Lambda_Phi}
    \Lambda_{ \Phi} = \left\lbrace \left(z, \dfrac{2}{i} \dfrac{\partial
          \Phi}{\partial z}(z) \right); z \in \CM^n
    \right\rbrace.
  \end{equation}
\end{prop}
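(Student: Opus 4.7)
The plan is to exploit the critical-point equation for the quadratic form $-\Im \phi(z, \cdot)$ at each fixed $z$, apply an envelope-theorem argument to identify $(2/i)\partial_z \Phi$ with the $z$-derivative of $\phi$ at the maximizer, and then read off the image of $\kappa_\phi$. For each $z \in \CM^n$, the function $x \mapsto -\Im \phi(z, x)$ on $\RM^n$ is a real quadratic polynomial whose Hessian is $-\Im \partial_x^2\phi$, negative definite by hypothesis (i). Hence there is a unique maximizer $x = x(z) \in \RM^n$, characterized by $\Im \partial_x \phi(z, x(z)) = 0$ and depending real-affinely on $(z, \bar z)$; consequently $\Phi$ is a real-valued quadratic function on $\CM^n$.

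Next I compute $\partial_z \Phi$ (Wirtinger derivative) by writing $-\Im w = (i/2)(w - \bar w)$ and using $\overline{\phi(z, x)} = \bar\phi(\bar z, x)$ for real $x$, where $\bar\phi$ denotes the conjugate-holomorphic function. The chain rule yields
\[
\partial_z \Phi(z) = \tfrac{i}{2}\bigl[\partial_z \phi(z, x(z)) + (\partial_x \phi - \partial_x \bar\phi)(z, x(z)) \cdot \partial_z x(z)\bigr].
\]
The bracketed second term vanishes because $\partial_x \phi - \partial_x \bar\phi = 2i\, \Im \partial_x \phi$ is zero at $x = x(z)$, yielding the crucial identity
\[
\partial_z \phi(z, x(z)) = \tfrac{2}{i}\, \partial_z \Phi(z).
\]

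To verify strict plurisubharmonicity, I differentiate once more in $\bar z$. Since $\phi$ is holomorphic, $\partial_{\bar z}\partial_z \phi = 0$, so $\partial_{\bar z}\partial_z \Phi = (i/2)\, \partial_x \partial_z \phi \cdot \partial_{\bar z} x(z)$. Solving the critical-point equation for $x(z)$ explicitly and differentiating in $\bar z$ expresses $\partial_{\bar z} x(z)$ as a constant multiple of $(\Im \partial_x^2\phi)^{-1}\, \overline{\partial_x \partial_z \phi}$, so the Levi form of $\Phi$ has the shape $(1/4)\, M (\Im A)^{-1} M^*$ with $M = \partial_x\partial_z\phi$ invertible by hypothesis (ii) and $\Im A = \Im \partial_x^2\phi$ positive definite by (i). This Hermitian matrix is manifestly positive definite, so $\Phi$ is strictly plurisubharmonic.

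Finally, by (ii), $\kappa_\phi$ extends to a holomorphic symplectomorphism of $\CM^{2n}$. A point $(x, \xi) \in \CM^{2n}$ lies in $\RM^{2n}$ precisely when $x \in \RM^n$ and $\xi = -\partial_x \phi(z, x) \in \RM^n$; the latter forces $\Im \partial_x \phi(z, x) = 0$, i.e.\ $x = x(z)$. Conversely, any $z \in \CM^n$ yields a unique real pair $(x(z), -\partial_x\phi(z, x(z))) \in \RM^{2n}$ whose $\kappa_\phi$-image is $(z, \partial_z\phi(z, x(z))) = (z, (2/i)\partial_z\Phi(z))$ by the envelope identity, establishing the bijection $\RM^{2n} \leftrightarrow \Lambda_\Phi$. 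The main technical care is the Wirtinger bookkeeping, since $x(z)$ depends on both $z$ and $\bar z$; the whole argument rests on the observation that the critical-point condition precisely cancels the $\partial_z x(z)$ contribution, making the envelope formula clean.
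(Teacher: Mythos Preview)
Your proof is correct. The paper itself does not prove this proposition: it is quoted from~\cite{sj-96} and simply stated without argument, so there is no ``paper's own proof'' to compare against. Your envelope-theorem approach (differentiate through the maximizer, use the critical-point condition $\Im\partial_x\phi(z,x(z))=0$ to kill the $\partial_z x(z)$ terms, then compute the Levi form as $\tfrac14\,M(\Im\partial_x^2\phi)^{-1}M^*$ with $M=\partial_x\partial_z\phi$) is the standard and natural route; the bijection argument at the end is also clean, using that $\kappa_\phi$ is already a global affine bijection on $\CM^{2n}$ and that the real constraint on $\xi=-\partial_x\phi(z,x)$ pins $x$ to the maximizer $x(z)$.
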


Throughout the paper, we shall use the following notation.
\begin{itemize}
\item $L(\DD z)$ is the Lebesgue measure on $ \CM^n$, \textit{i.e.}
  \begin{equation}
    \label{equ:lebesgue}
    L(\DD z)= \prod_{j=1}^n \left( 
      \dfrac{i}{2}\DD z_j \wedge\DD \bar{z}_j \right) =: 
    \left( \frac{i}{2} \right)^n\DD z \wedge\DD \bar{z}. 
  \end{equation}
\item
  $L^2_{\Phi}( \CM^n) := L^2( \CM^n, e^{-2 \Phi(z)/ \hbar} L(\DD z))$
  is the set of measurable functions $f:\CM^n\to\CM$ such that:
  \[
  \int_{ \CM^n} |f(z)|^2 e^{-2 \Phi(z)/ \hbar} L(\DD z) < + \infty.
  \]
\item $H_{\Phi}( \CM^n) := \Hol( \CM^n) \cap L^2_{ \Phi}( \CM^n)$ is
  the closed subspace of holomorphic functions in
  $L^2_{\Phi}( \CM^n)$.
\item If $z,w\in\CM^n$, $z=(z_1,\dots,z_n)$ and $w=(w_1,\dots,w_n)$,
  then we denote by $z\cdot w$ the `complex scalar product',
  \emph{i.e.}
  \[
  z \cdot w := \sum_{j=1}^n z_j w_j \; .
  \]
\end{itemize}

\begin{prop}[{\cite[Formula~(1.12)]{sj-96}}] \label{prop_defi_Brg_projection}
  Let $ \Phi$ be the strictly plurisubharmonic quadratic function
  defined by Equation \eqref{eq_defi_Phi}, and let $\psi$ be the
  unique holomorphic quadratic form on $ \CM^n \times \CM^n$ such
  that, for all $z \in \CM^n$:
  \begin{equation}
    \psi(z, \bar{z}) = \Phi(z) \,.
    \label{equ:psi}  
  \end{equation}
  The following properties hold.
  \begin{enumerate}[label=\roman*)]
  \item The orthogonal projection
    $ \Pi_{\Phi}: L^2_{ \Phi}( \CM^n) \to H_{\Phi}( \CM^n)$ is given
    by:
    \begin{equation}
      \Pi_{ \Phi} u(z) = \dfrac{2^n \det ( \partial_{z\bar z}^2 \Phi )}{( \pi \hbar)^n} 
      \int_{ \CM^n} 
      e^{\frac{2}{\h}( \psi(z, \bar{w}) - \Phi(w))} u(w) L(\DD w) .
      \label{equ:quadratic-projection}
    \end{equation}
    
  \item $T_{ \phi}: L^2( \mathbb{R}^n) \to H_{ \Phi}( \CM^n)$ is a
    unitary transformation and if
    $T_{\phi}^*: L^2_{\Phi}( \CM^n) \to L^2( \mathbb{R}^n)$ is the
    adjoint of $T_{\phi}$, then $\Pi_{ \Phi} = T_{\phi} T_{\phi}^*$.
  \end{enumerate}
  The operator $ \Pi_{\Phi}$ is called the Bergman projection onto
  $H_{ \Phi}$.
\end{prop}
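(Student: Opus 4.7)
The plan is to prove (i) and (ii) in tandem: establish that $T_\phi$ is an isometry, then compute the integral kernel of $T_\phi T_\phi^*$, and finally check this gives the orthogonal projection onto $H_\Phi$.

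First I would verify that $T_\phi$ maps $\mathscr{S}(\RM^n)$ into $\Hol(\CM^n) \cap L^2_\Phi(\CM^n)$. Holomorphy in $z$ follows from differentiation under the integral sign, justified by Gaussian decay in $x$ coming from the positivity of $\Im\partial^2_x\phi$. The pointwise bound $|T_\phi u(z)| e^{-\Phi(z)/\h} \lesssim \h^{-3n/4}$ is obtained by completing the square in $x$: the exponential growth rate $\Phi(z)/\h$ is precisely the content of the defining identity~\eqref{eq_defi_Phi}. For the isometry, I would expand $\|T_\phi u\|^2_{L^2_\Phi}$ by Fubini and evaluate the inner $z$-integral, which is an \emph{exact} Gaussian in $(z,\bar z)$ since the phase $\phi(z,x)-\overline{\phi(z,y)}+2i\Phi(z)$ is quadratic. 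The Gaussian stationary-phase formula produces a kernel concentrated on $\{x=y\}$, and matching the prefactor against $\delta(x-y)$ forces precisely the normalization $c_\phi$ from~\eqref{formule_constante_c_phi}. Consequently $T_\phi^* T_\phi = \mathrm{Id}$, so $T_\phi T_\phi^*$ is an orthogonal projection of $L^2_\Phi(\CM^n)$ onto the closed image of $T_\phi$, a subspace of $H_\Phi(\CM^n)$.

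To identify this projection with $\Pi_\Phi$ and prove (i) simultaneously, I would compute the kernel of $T_\phi T_\phi^*$ by Gaussian integration in the remaining real variable $x$:
\[
K(z,w) \,=\, |c_\phi|^2 \h^{-3n/2} \int_{\RM^n} e^{(i/\h)\bigl(\phi(z,x) - \overline{\phi(w,x)}\bigr)}\DD x.
\]
The phase is quadratic in $x$ with positive definite imaginary part, so this evaluates to a Gaussian constant times $\exp\bigl((i/\h)[\phi(z,x_c) - \overline{\phi(w, x_c)}]\bigr)$ at the unique complex critical point $x_c = x_c(z, \bar w)$, which depends holomorphically on $(z, \bar w)$. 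By polarization of the identity $\Phi(z) = -\Im\phi(z, x_c(z, \bar z))$, the critical value equals $(2/i)\psi(z, \bar w)$, hence $K(z,w) = C_\phi\, e^{(2/\h)\psi(z,\bar w)}$. The prefactor $C_\phi$ matches $2^n\det(\partial^2_{z\bar z}\Phi)/(\pi\h)^n$ via the linear-algebra identity obtained by differentiating the critical-point equation $\partial_x\phi(z,x_c) = \partial_x\overline{\phi(z,x_c)}$ in $(z,\bar z)$, which expresses $\partial^2_{z\bar z}\Phi$ in terms of $\partial_x\partial_z\phi$ and $\Im\partial^2_x\phi$. To rule out that the image is a strict closed subspace of $H_\Phi$, I would verify the reproducing property of this kernel: writing $\Phi = \Phi_0 + \Re H$ with $\Phi_0$ purely $(1,1)$ and $H$ holomorphic, the isometric conjugation $u \mapsto e^{H/\h}u$ reduces to the pure Hermitian case $\Phi_0(z) = \langle Az,\bar z\rangle$, which after the linear change $z\mapsto A^{-1/2}z$ is the classical Fock space where the reproducing property follows from the orthonormal monomial basis $z^\alpha/(\h^{|\alpha|+n}\pi^n \alpha!)^{1/2}$.

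The only real obstacle is the normalization bookkeeping: tracking how $c_\phi$, the Jacobians in the $(z,\bar z)$ and $x$ Gaussian integrals, and $\det\Im\partial^2_x\phi$ combine into the exact prefactor $2^n\det(\partial^2_{z\bar z}\Phi)/(\pi\h)^n$. Everything else reduces to exact stationary phase for quadratic phases, a unitary conjugation by a holomorphic weight, and a linear change of coordinates.
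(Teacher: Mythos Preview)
The paper does not actually prove this proposition: it is stated as a cited result from \cite{sj-96} and no argument is supplied in the present text. So there is no ``paper's own proof'' to compare against; the authors treat the formula and the unitarity of $T_\phi$ as known background.

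That said, your outline is the standard route to this result and is essentially what one finds in the cited reference or in textbook treatments (e.g.\ \cite[Chapter 13]{zworski-book-12}): exploit that all phases are quadratic so every Gaussian integral is exact, establish $T_\phi^*T_\phi=\mathrm{Id}$ to get that $T_\phi T_\phi^*$ is an orthogonal projection, compute its kernel by integrating out the real variable, and then check that the range is all of $H_\Phi$ by reducing (via the holomorphic weight $e^{H/\h}$ and a linear change of variables) to the model Fock space. Your remark that the only delicate point is the bookkeeping of constants is accurate; the identity linking $\det(\partial^2_{z\bar z}\Phi)$, $\det(\partial_x\partial_z\phi)$, and $\det(\Im\partial_x^2\phi)$ is exactly what makes the prefactors match, and it drops out of differentiating the critical-point equation as you indicate. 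Nothing in your sketch is wrong or incomplete at the level of a proof outline.
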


The FBI transform allows to obtain a correspondence between Weyl
operators acting on $L^2( \mathbb{R}^n)$ and Weyl operators acting on
$H_{\Phi}( \CM^n)$, introduced in~\cite{sj-96}.  Let
$S( \mathbb{R}^{2n})$ denote the following symbol class:
\[
S( \mathbb{R}^{2n}) = \lbrace a \in \Cinf( \mathbb{R}^{2n}); \quad
\forall \alpha \in \mathbb{N}^{2n}, \exists C_{ \alpha} > 0,
| \partial^{ \alpha} a | \leq C_{ \alpha} \rbrace.
\]
Using the parametrization~\eqref{equ:Lambda_Phi} of
$ \Lambda_{ \Phi} \simeq \CM^n$, we will also use the class of symbols
$S( \Lambda_{ \Phi})$ that we identify with
$S( \CM^n) \simeq S( \mathbb{R}^{2n})$.

\begin{defi}
  Let $a_{ \hbar} \in S( \mathbb{R}^{2n})$. Define the \textbf{Weyl
    quantization} of $a_{ \hbar}$, denoted by
  $ \Op^{\w}_\h(a_{ \hbar})$, by the following formula, for
  $u \in \mathscr{S}( \mathbb{R}^{n})$:
  \[
  \left[\Op^{\w}_\h(a_{ \hbar}) u \right] (x) = \dfrac{1}{(2 \pi
    \hbar)^n} \intint_{\mathbb{R}^{2n}} e^{\frac{i}{\h}(x-y) \cdot
    \xi} a_{ \hbar} \left( \tfrac{x+y}{2}, \xi \right) u(y)\DD y\DD
  \xi.
  \]
  Then $a_{ \hbar}$ is called the (Weyl) symbol of the
  pseudo-differential operator $ \Op^{\w}_\h(a_{ \hbar})$.
\end{defi}
By the Calderón-Vaillancourt theorem (see for instance~\cite[Theorem
7.11]{dimassi-sjostrand}, \cite[Theorem 2.8.1]{martinez-book} or
\cite[Theorem 4.23]{zworski-book-12}), such an operator
$ \Op^{\w}_\h(a_{ \hbar})$ extends to a bounded operator on
$L^2(\RM^n)$ whose operator norm is bounded by a constant independent
of $\h$.

\begin{defi}
  \label{defi:complex-weyl}
  Let $b_{ \hbar} \in S( \Lambda_{ \Phi})$.  The \textbf{complex Weyl
    quantization} of the symbol $b_{ \hbar}$ is the operator given by
  the contour integral:
  \[
  [\Op^{\w}_{ \Phi}(b_{ \hbar}) u] (z) = \dfrac{1}{(2 \pi \hbar)^n}
  \intint_{\Gamma(z)} e^{(i/\hbar) (z-w) \cdot \zeta} b_{ \hbar}
  \left( \tfrac{z+w}{2}, \zeta \right) u(w)\DD w\DD \zeta,
  \]
  where
  $ \Gamma(z) = \left\lbrace (w, \zeta) \in \CM^{2n}; \zeta =
    \dfrac{2}{i} \dfrac{\partial \Phi}{\partial z} \left(
      \tfrac{z+w}{2} \right) \right\rbrace$.
\end{defi}

The complex Weyl quantization can be related to the usual Weyl
quantization be the following result, which is an instance of the
``exact Egorov theorem'':
\begin{prop}[\cite{h-sj-04}]\label{prop_transfor_Bargmann_et_quantif_weyl}
  Let $a_{ \hbar} \in S( \mathbb{R}^{2n})$. We have
  \begin{equation}
    T_{ \phi} \Op^{\w}_\h(a_{ \hbar}) T_\phi^*  = \Op^{\w}_{ \Phi}(b_{ \hbar})
  \end{equation}
  where the symbol $b_{ \hbar}$ is given by
  $ b_{ \hbar} = a_{ \hbar} \circ \kappa_{ \phi}^{-1}$, thus
  $b_{ \hbar} \in S( \Lambda_{ \Phi})$.
\end{prop}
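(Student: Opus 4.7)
The plan is to verify the identity directly at the level of Schwartz kernels, exploiting the fact that all the oscillatory phases involved are quadratic in the integration variables, so that stationary phase is exact and the computation reduces to Gaussian integration plus a contour deformation. Equivalently, one can view the result as the manifestation of the metaplectic covariance of Weyl quantization: the quadratic FBI transform $T_\phi$ implements the metaplectic lift of the linear canonical transformation $\kappa_\phi$, and for any such unitary implementer $U$ one has $U \Op^{\w}_\h(a) U^* = \Op^{\w}_\h(a\circ \kappa^{-1})$; the present statement is the transcription of that fact to the complex side $\Lambda_\Phi$ of $\kappa_\phi$.

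Concretely, I would first write the kernel of $T_\phi \Op^{\w}_\h(a_\h) T_\phi^*$ as an iterated oscillatory integral in $(x,y,\xi)\in\RM^{3n}$ with phase
\[
\Psi(z,w,x,y,\xi) = \phi(z,x) + (x-y)\cdot \xi - \overline{\phi(w,y)},
\]
plus a weight $e^{-2\Phi(w)/\h}$ coming from the adjoint pairing in $L^2_\Phi(\CM^n)$. The phase is quadratic in $(x,y,\xi)$ and its unique critical point satisfies $x=y$ and $\xi = -\partial_x \phi(z,x) = -\partial_y \overline{\phi(w,y)}$, which are exactly the defining equations of $\kappa_\phi$ and $\overline{\kappa_\phi}$. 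Performing exact Gaussian integration in $(x,y,\xi)$, the prefactor combines with $|c_\phi|^2$ and the $(2\pi\h)^{-n}$ of the Weyl formula to reproduce the normalisation appearing in Definition \ref{defi:complex-weyl}; the critical value of $\Psi$ is precisely $(z-w)\cdot \zeta$ for $\zeta$ the covector determined by the critical equations; and the symbol read off at the stationary point is exactly $a_\h \circ \kappa_\phi^{-1}$ evaluated at the $\Lambda_\Phi$-midpoint $\tfrac{z+w}{2}$.

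The main technical obstacle is the bookkeeping of contours. Stationary phase naturally produces an integral along a real $\RM^n$-type contour dictated by the critical equations, whereas $\Op^{\w}_\Phi(b_\h)$ is defined through the contour $\Gamma(z)$ attached to $\Lambda_\Phi$. To pass from one to the other I would invoke Stokes' formula, using holomorphicity of the integrand in $(w,\zeta)$ together with the strict plurisubharmonicity of $\Phi$, which provides Gaussian transversal decay and kills the boundary contributions. A cleaner alternative, which avoids the contour shift entirely, is to verify the identity on the family of exponential symbols $a(x,\xi) = e^{i(\alpha\cdot x + \beta\cdot \xi)/\h}$: for such $a$ both sides are explicit Gaussian integrals and $\kappa_\phi$ acts linearly on $(\alpha,\beta)$, so the claim becomes a finite-dimensional calculation; one then extends to $a_\h \in S(\RM^{2n})$ by Fourier inversion and continuity of both sides.
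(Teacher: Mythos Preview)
The paper does not actually prove this proposition: it is stated as a citation from \cite{h-sj-04}, preceded only by the remark that it is ``formally, an application of the invariance of Weyl quantization under the metaplectic representation (with complex quadratic phase).'' Your proposal is precisely a concrete unpacking of that remark, and the strategy is correct: since $\phi$ is quadratic, the kernel of $T_\phi \Op^{\w}_\h(a_\h)T_\phi^*$ is an oscillatory integral with quadratic phase in the fibre variables, exact stationary phase applies, and the critical point equations are exactly the graph of $\kappa_\phi$, forcing the symbol on the output side to be $a_\h\circ\kappa_\phi^{-1}$.

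One small comment on the contour step: the deformation from the stationary-phase contour to $\Gamma(z)$ is indeed justified, but the cleanest way to see that no boundary terms arise is not strict plurisubharmonicity of $\Phi$ per se (the integrand is entire in $(w,\zeta)$ here), but rather the quadratic decay along the deformation combined with the fact that the contour $\Gamma(z)$ in Definition~\ref{defi:complex-weyl} is an affine-linear graph over $w\in\CM^n$. For quadratic $\Phi$ this is a genuine $\CM^n$-cycle (no truncation at radius $r$ as in the later analytic-symbol setting), so Stokes' theorem applies globally without residual terms. Your alternative route via exponential symbols $e^{i(\alpha\cdot x+\beta\cdot\xi)/\h}$ and Fourier superposition is also standard and perhaps the path of least resistance, since it sidesteps the contour bookkeeping entirely.
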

In particular,
$ \Op^{\w}_{ \Phi}(b_{ \hbar}) : H_{ \Phi}( \CM^n) \to H_{ \Phi}(
\CM^n)$ is uniformly bounded with respect to $\hbar$.

There exists also a connection between the Berezin-Toeplitz
quantization and the complex Weyl quantization $\Op^{\w}_{ \Phi}$ of
Definition~\ref{defi:complex-weyl} above. Let us first recall the
definition of the Berezin-Toeplitz quantization of $ \CM^n$.

\begin{defi}
  Let $f_{\hbar} \in S( \CM^n)$. Define the \textbf{Berezin-Toeplitz
    quantization} of $f_{\hbar}$ as:
  \[
  T_{f_{\hbar}} := \Pi_{ \Phi} M_{f_{_\hbar}} \Pi_{ \Phi} ,
  \]
  where $M_{f_{\hbar}}: L^2_{\Phi}( \CM^n) \to L^2_{\Phi}( \CM^n)$ is
  the operator of multiplication by the function $f_{\hbar}$. We call
  $f_{\hbar}$ the {symbol} of the {Berezin-Toeplitz operator}
  $T_{f_{\hbar}}$.
\end{defi}

\begin{remark}
  In \cite{sj-96}, Berezin-Toeplitz operators on $ \CM^n$ were denoted
  by $ \widetilde{\Op}_{ \hbar, 0}$.
\end{remark}

The relation between the Berezin-Toeplitz and the complex Weyl
quantizations of $ \CM^n$ is given in the next proposition, where we
identify $\Lambda_\Phi$ with $\CM^n$.
\begin{prop}[{\cite[(1.23)]{sj-96}}, {\cite[Theorem
    13.10]{zworski-book-12}}] \label{prop_Toeplitz=pseudo_H(Phi)} $ $
  \begin{enumerate}[label=\roman*)]
  \item \label{item:f-to-b} Let $f_{\hbar} \in S( \CM^n)$ admit an
    asymptotic expansion in powers of $\hbar$ (in the topology of
    $S(\CM^n)$). Let $T_{f_{\hbar}}$ be the Berezin-Toeplitz operator
    of symbol $f_{\hbar}$. Then, we have:
    \[
      T_{f_{\hbar}} = \Op^{\w}_{ \Phi}( b_{ \hbar}) \quad \text{ in }
      \quad \mathcal{L}(H_{\Phi}( \CM^n)),
    \]
    where $b_{ \hbar} \in S( \Lambda_{ \Phi})$ admits an asymptotic
    expansion in powers of $\hbar$ given, for all
    $z \in \Lambda_{ \Phi} \simeq \CM^n$, by
    \begin{equation}
      \label{equ:f-to-b}
      b_{ \hbar}(z) = \exp \left( \dfrac{\hbar}{4} \left\langle
          \left( \partial^2_{z \bar{z}} \Phi
          \right)^{-1} \partial_z, \partial_{ \bar{z}}
        \right\rangle \right) ( f_{\hbar}(z)) \quad \text{ in } \quad
      S(\Lambda_{\Phi}).
    \end{equation}

  \item \label{item:b-to-f} Let $b_{ \hbar} \in S( \Lambda_{ \Phi})$
    admit an asymptotic expansion in powers of $ \hbar$. Then, there
    exists a function $f_{\hbar} \in S( \CM^n)$ such that:
    \[
      \Op^{\w}_{ \Phi}(b_{ \hbar}) = T_{f_{\hbar}} +
      \mathcal{O}(\hbar^{\infty}) \quad \text{ in } \quad
      \mathcal{L}(H_{\Phi}( \CM^n)),
    \]
    where $T_{f_{\hbar}}$ is the Berezin-Toeplitz operator of symbol
    $f_{\hbar}$, and $f_\h$ admits the following asymptotic expansion
    in powers of $\hbar$
    \begin{equation}
      \label{equ:b-to-f}
      f_{ \hbar}(z) \sim \exp \left( \dfrac{-\hbar}{4} \left\langle
          \left( \partial^2_{z \bar{z}} \Phi
          \right)^{-1} \partial_z, \partial_{ \bar{z}} \right\rangle
      \right) ( b_{\hbar}(z)) \quad \text{ in } \quad S(\CM^n).
    \end{equation}
  \end{enumerate}
\end{prop}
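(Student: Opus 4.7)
The plan is to reduce both parts of the statement to a Gaussian integration in $\hbar$ which, in the present quadratic setting, is exact on each term of the asymptotic expansion.

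For part~\ref{item:f-to-b}, I would first write the kernel of $T_{f_\hbar}$ explicitly using Proposition~\ref{prop_defi_Brg_projection}. Since $\Pi_\Phi$ is a projection and the outer copy is redundant when testing against $u\in H_\Phi$, one obtains
\[
T_{f_\hbar}u(z) = \frac{2^n \det(\partial^2_{z\bar z}\Phi)}{(\pi\hbar)^n}
\int_{\CM^n} e^{\frac{2}{\hbar}(\psi(z,\bar w) - \Phi(w))} f_\hbar(w) u(w)\, L(\DD w).
\]
Using that $\Phi$ is quadratic, a direct algebraic manipulation decomposes the phase as
\[
\tfrac{2}{\hbar}(\psi(z,\bar w)-\Phi(w)) = \tfrac{i}{\hbar}(z-w)\cdot\zeta - \tfrac{1}{\hbar}(z-w)\cdot\partial^2_{z\bar z}\Phi\cdot\overline{(z-w)},
\]
where $\zeta = \tfrac{2}{i}\partial_z\Phi(\tfrac{z+w}{2})$ parametrizes $\Gamma(z)$. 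The first term matches the complex Weyl phase of Definition~\ref{defi:complex-weyl}, while the second is a constant-coefficient Gaussian in the direction transverse to $\Gamma(z)$.

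The main step is then the transverse Gaussian integration. I would extend $f_\hbar$ almost holomorphically and deform the $\CM^n$-contour to $\Gamma(z)$, producing $\mathcal{O}(\hbar^\infty)$ remainders from the $\dbar$-errors by standard almost analytic machinery. The transverse Gaussian against the (positive definite) Hessian $\partial^2_{z\bar z}\Phi$ can then be computed term-by-term by stationary phase; because the Hessian is constant, the formal expansion sums exactly to the action of the heat-type operator $\exp(\tfrac{\hbar}{4}\langle(\partial^2_{z\bar z}\Phi)^{-1}\partial_z,\partial_{\bar z}\rangle)$ applied to $f_\hbar$. This identifies $T_{f_\hbar}$ with $\Op^{\w}_\Phi(b_\hbar)$ for $b_\hbar$ as in~\eqref{equ:f-to-b}.

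Part~\ref{item:b-to-f} follows by formal inversion. The operator $\exp(\tfrac{\hbar}{4}\langle(\partial^2_{z\bar z}\Phi)^{-1}\partial_z,\partial_{\bar z}\rangle)$ is invertible on the space of formal power series in $\hbar$ with coefficients in $S(\CM^n)$, with explicit inverse $\exp(-\tfrac{\hbar}{4}\langle\cdots\rangle)$. A standard Borel summation applied to this inverse produces a genuine symbol $f_\hbar\in S(\CM^n)$ whose Berezin-Toeplitz quantization agrees with $\Op^{\w}_\Phi(b_\hbar)$ modulo $\mathcal{O}(\hbar^\infty)$ in $\mathcal{L}(H_\Phi)$. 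I expect the main technical hurdle to be the rigorous justification of the contour deformation and the stationary phase remainder estimates uniformly in $\hbar$, since $f_\hbar$ is only smooth; once these analytic estimates are in place, the Gaussian computation and the Borel summation are routine.
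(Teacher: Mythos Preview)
The paper does not prove this proposition; it is quoted from \cite[(1.23)]{sj-96} and \cite[Theorem~13.10]{zworski-book-12}, and the text moves directly to the remarks following the statement. So there is no ``paper's own proof'' to compare against.

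That said, your outline is essentially the standard argument found in those references. The quadratic identity you write,
\[
2\bigl(\psi(z,\bar w)-\Phi(w)\bigr)=i(z-w)\cdot\zeta-(z-w)\cdot\partial^2_{z\bar z}\Phi\cdot\overline{(z-w)},\qquad \zeta=\tfrac{2}{i}\partial_z\Phi\bigl(\tfrac{z+w}{2}\bigr),
\]
is correct for quadratic $\Phi$, and it is exactly what makes the computation work: the first term is the complex Weyl phase on $\Gamma(z)$, and the constant-Hessian Gaussian in the transverse variable produces the heat operator $\exp\bigl(\tfrac{\hbar}{4}\langle(\partial^2_{z\bar z}\Phi)^{-1}\partial_z,\partial_{\bar z}\rangle\bigr)$ acting on $f_\hbar$. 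As the paper's own remark after the proposition confirms, this is an \emph{exact} identity (forward heat flow), so part~\ref{item:f-to-b} holds without any $\mathcal{O}(\hbar^\infty)$ remainder and without needing almost analytic extensions; the latter, together with Borel summation, are only needed for the formal inversion in part~\ref{item:b-to-f}. One phrasing issue: you should not say ``deform the $\CM^n$-contour to $\Gamma(z)$''. The $w$-integral over $\CM^n$ \emph{is} the parametrization of $\Gamma(z)$ once $\zeta=\zeta(z,w)$ is substituted; what remains is a careful bookkeeping of the measure $\DD w\wedge\DD\zeta$ versus $L(\DD w)$ and the prefactor $\det(\partial^2_{z\bar z}\Phi)$, not a contour deformation.
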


\begin{remark}
  In Item~\ref{item:f-to-b}, we actually don't need $f_\h$ to admit an
  asymptotic expansion in powers of $\h$. Then $b_\h$ is given
  by~\eqref{equ:f-to-b}, which is an exact formula, corresponding to
  solving a heat equation in positive time. On the other hand, the
  reverse formula~\eqref{equ:b-to-f} is only formal.
\end{remark}
\begin{remark}
  If $f_{\hbar} = 1$, then $T_{f_{\hbar}} = \Pi_{\Phi}$.  Hence
  Proposition \ref{prop_Toeplitz=pseudo_H(Phi)} implies that the
  Bergman projection $\Pi_{\Phi}$ can be written as a complex
  pseudo-differential operator. This remark was essential in the
  work~\cite{rouby-17}; our aim in this paper is to obtain a analogue
  of this remark for more general, non-quadratic $\Phi$.
\end{remark}

\subsection{Analytic symbols}

In view of extending the representation
formula~\eqref{equ:quadratic-projection} for the Bergman projection to
the case of a general phase function $\Phi$, we first need to discuss
the microlocal classes of analytic symbols, as introduced
in~\cite{sj-asterisque-82}, following~\cite{boutet-kree-67}.

\begin{defi}[Space $H^{\loc}_{ \Phi}$]
  \label{defi:h-loc}
  Let $ \Omega$ be an open subset of $\CM^n$. Let
  $ \Phi \in \mathscr{C}^0( \Omega; \mathbb{R})$. Let $u_{ \hbar}$ be
  a function defined on $ \Omega$. We say that $u_{ \hbar}$ belongs to
  the space $H_{ \Phi}^{\loc}( \Omega)$ if:
  \begin{enumerate}
  \item $ u_{ \hbar} \in \Hol( \Omega)$;
  \item $ \forall K \Subset \Omega$, $ \forall \epsilon > 0$,
    $ \exists C > 0$, such that
    $ |u_{ \hbar}(z)| \leq C e^{(\Phi(z) + \epsilon)/ \hbar}$ for all
    $z \in K$.
  \end{enumerate}
  If $u_{ \hbar} \in H_0^{\loc}( \Omega)$ (meaning that $ \Phi = 0$),
  we say that $u_{ \hbar}$ is an \textbf{analytic symbol}.
\end{defi}

Notice that analytic symbols may have a sub-exponential growth as
$\h\to 0$: for instance the constants $\h^{-m}$, for any $m\geq 0$,
are analytic symbols. In this work it will be important to control the
polynomial growth in $\h^{-1}$, and hence we introduce a finer
definition, as follows.
\begin{defi}
  \label{defi:symbol-finite-order}
  Let $m \in \mathbb{R}$. We say that $a_{ \hbar}\in\Hol(\Omega)$ is
  an \textbf{analytic symbol of finite order} $m$ if
  $a_{ \hbar} = \mathcal{O}(\hbar^{-m})$ locally uniformly in
  $ \Omega$, \textit{i.e.}  $ \forall K \Subset \Omega$,
  $ \exists C>0$ such that for all $z \in K$:
  \[
  |a_{ \hbar}(z)| \leq C \hbar^{-m}.
  \]
\end{defi}
Naturally, analytic symbols of finite order are also analytic symbols
in the sense of Definition~\ref{defi:h-loc}. Let $S^0(\Omega)$ be the
space of analytic symbols of order zero in $\Omega$.
\begin{defi}
  \label{defi:formal}
  A \textbf{formal classical analytic symbol} $\hat{a}_\h$ in $\Omega$
  is a formal series $\hat{a}_\h=\sum_{j=0}^\infty a_j \h^j$, where
  $a_j\in\Hol(\Omega)$ satisfies:
  \[
  \forall K \Subset \Omega, \exists C>0,\forall j\geq 0, \quad
  \sup_K\abs{a_j}\leq C^{j+1}j^j.
  \]
  We denote by $\hat S^0(\Omega)$ the space of such power series.
\end{defi}
The next definition is similar to the one used in~\cite[Definition
2.1]{boutet-kree-67}.
\begin{defi}
  \label{defi:cas}
  We say that $a_{ \hbar}\in S^0(\Omega)$ is a \textbf{classical
    analytic symbol} if there exists $\hat a_\h\in \hat S^0(\Omega)$
  such that $a_{ \hbar}$ admits the asymptotic expansion:
  $a_{ \hbar} \sim \hat{a}_{\hbar} = \sum_{j=0}^{ \infty} a_j
  \hbar^j$, in the following sense:
  \begin{equation}
    \label{eq_classical_symbol} \forall K \Subset \Omega, \exists C>0,
    \forall N\geq 0, \quad \quad \sup_K\abs{a_\h - \sum_{j=0}^{N-1}
      a_j \hbar^j}\leq \h^NC^{N+1}N^N.
  \end{equation}
\end{defi}

It is useful to introduce spaces of germs at a given point
$x_0\in \CM^n$; we let $H_{\Phi,x_0}$ be the space of germs of the
presheaf $H^{\loc}_{ \Phi}$ at $x_0$, \emph{i.e.}  $H_{\Phi,x_0}$ is
the inductive limit:
\[
H_{\Phi,x_0} := \lim_{\overrightarrow{\Omega\ni x_0}} H^{\loc}_{
  \Phi}(\Omega),
\]
where $\Omega$ varies in the set $\mathcal{V}(x_0)$ of open
neighbourhoods of $x_0$.  The local space $\widetilde H_{\Phi,x_0}$
consists of the germs $H_{\Phi,x_0}$ modulo exponentially small terms,
as follows.
\begin{defi}[Negligible germs and $\widetilde H_{\Phi,x_0}$]
  \label{defi:negligible-germ}
  An element $u_\h\in H_{ \Phi,x_0}$ will be called
  \textbf{negligible} if it belongs to the space
  \[
  \N := \{ u_\h \in H_{ \Phi, x_0} \quad ; \quad \exists c>0, \exists
  \Omega\in \mathcal{V}(x_0), \quad u_\h\in
  H_{\Phi-c}^{\loc}(\Omega)\}.
  \]
  The {$x_0$-localized space} is the quotient:
  \[
  \widetilde H_{\Phi,x_0} := H_{\Phi,x_0} / \N.
  \]
\end{defi}
Thus, two germs $u_\h$ and $v_\h$ at $x_0$ are equivalent if
$e^{-\Phi/\h}(u_\h-v_\h)$ is exponentially small near $x_0$ as
$\h\to 0$. We shall use the notation $u_\h \sim_a v_\h$ (where ``a''
stands for ``analytic'') to indicate that $u_\h = v_\h \mod \N$. Since
$S^0(\Omega)\subset H_{0}^{\loc}(\Omega)$, the space
$\widetilde H_{0,x_0}$ contains the subspace $(S^0_{x_0} \mod \N)$ of
symbols of order zero localized at $x_0$.

If $\Omega\in \mathcal{V}(x_0)$ and $\hat a_\h\in \hat S^0(\Omega)$,
then there exists a unique element $a_\h \in \widetilde H_{0,x_0}$
that admits, in some $B\in \mathcal{V}(x_0)$, the asymptotic expansion
given by $\hat a_\h$. Indeed, if the asymptotic expansion of a
classical analytic symbol $a_\h$ is zero, it follows
from~\eqref{eq_classical_symbol} (taking $N=1/e C\h$) that
$\sup_K\abs{a_\h}\leq Ce^{-1/e C\h}$; in particular,
$a_\h \in H_{-c}^{\loc}$ with $c = 1/eC$. Conversely, let $B$ be an
open ball centred at $x_0$ and such that $B\Subset\Omega$. Let $C$ be
the constant of Definition~\ref{defi:formal} with $K=\overline{B}$,
and let, for $z\in B$,
\[
a_\h(z) = \sum_{j=0}^{[1/ (e C \hbar)]} a_j(z) \hbar^j.
\]
Then, one can check that $a_\h\in S^0(B)$ and admits the asymptotic
expansion $\hat a_\h$ in the sense of~\eqref{eq_classical_symbol}.
With a slight abuse of notation, $a_\h$ will be called a classical
analytic symbol at $x_0$.

In order to discuss exponential decay, it is useful to introduce
variations of the weight function $\Phi$.
\begin{defi}\label{defi:H-negligible}
  A linear operator
  $R:H_{\Phi}^\loc(\Omega) \to H_{\Phi}^\loc(\Omega)$ will be called
  \textbf{negligible at $x_0$} if for any
  $\Omega_1\in\mathcal{V}(x_0)$ with $\Omega_1\Subset\Omega$, there
  exists $\Omega_2\in\mathcal{V}(x_0)$ with $\Omega_2\Subset\Omega$,
  and a continuous function $\Phi_2<\Phi$ on $\Omega_2$, such that
  \[
  R : H_{\Phi}(\Omega_1) \to H_{\Phi_2}(\Omega_2)
  \]
  is uniformly bounded as $\h\to 0$, where $H_{\Phi}(\Omega_1)$ and
  $H_{\Phi_2}(\Omega_2)$ are equipped with the corresponding
  $\lphi$-norm (Definition~\ref{defi:lphi}). When $R_1$ and $R_2$ are
  two operators such that $R_1-R_2$ is negligible, we will write
  $R_1\hnegl R_2$.
\end{defi}
In particular, if $R$ is negligible at $x_0$ and
$u_\h\in H_{\Phi,x_0}$, then $Ru_\h \sim_a 0$.

\paragraph{Notation. } In the rest of this text, when dealing with
germs, we will sometimes use the notation $\Vois(x_0,E)$, where
$x_0\in E$, to denote ``a sufficiently small neighbourhood of $x_0$ in
$E$''.

\subsection{Analytic pseudo-differential operators}
\label{sec:analyt-pseudo-diff}

Classical analytic symbols give rise to a well-behaved
pseudo-differential calculus, as shown in the
book~\cite{sj-asterisque-82}. We briefly recall here the necessary
definitions and properties, referring to~\cite{sj-asterisque-82}
and~\cite{h-sj-minicours} for details.

Let $x_0\in\CM^n$, let $\Phi$ be a $\mathscr{C}^2$ real-valued
function defined in a small neighbourhood $\Omega$ of $x_0$.  For
$x\in\Omega$, $r>0$ sufficiently small and $R>0$, we define the
contour in $\CM^{2n}$:
\begin{equation} \label{equ:Gamma(x)} \Gamma(x) :=
  \left\{(y,\theta)\in \Omega\times \CM^{n}; \quad \theta =
    \dfrac{2}{i} \dfrac{\partial \Phi}{\partial x}(x) + i R
    \overline{(x-y)}; \quad |x-y| \leq r \right\}.
\end{equation}
By Taylor's formula
$\Phi(y) = \Phi(x) + 2\Re ((y-x)\cdot \partial_x\Phi(x) +
\O(\abs{x-y}^2))$,
we obtain the following estimate when $\abs{x-y}$ is small enough:
\begin{equation}
  e^{- \Phi(x)/ \hbar} \left| e^{i(x-y)\cdot \theta/ \hbar} \right| e^{
    \Phi(y)/ \hbar} \leq e^{-(1/ \hbar)(R - C) |x-y|^2},
  \label{equ:kernel-pseudo}
\end{equation}
where $C$ is controlled by the $\mathscr{C}^2$-norm of $\Phi$ near
$x_0$.  Let $R>C$, and let $a_{ \hbar}(x, y, \theta)$ be an analytic
symbol defined in a neighbourhood of
$ \left( x_0, x_0, \theta_0 \right)\in\CM^{3n}$, with
$\theta_0:=\frac{2}{i} \frac{\partial \Phi}{\partial x} (x_0)$ (in
other terms, $a_\h\in H_{0,(x_0, x_0, \theta_0)}$); let us consider,
for $u_\h\in H_{\Phi,x_0}$, the contour integral:
\begin{equation}
  A_\Gamma u(x) = \dfrac{1}{( 2 \pi \hbar)^n} \intint_{
    \Gamma(x)} \!\!  e^{(i/ \hbar)(x-y)\cdot \theta} a_{ \hbar}(x, y,
  \theta) u(y) \DD y \DD \theta.
  \label{equ:pseudo-diff-complexe}
\end{equation}
Using a deformation variant of Stokes' formula (see for
instance~\cite[Lemma 12.2]{sj-asterisque-82}), one can show that
$\dbar(A_\Gamma u)$ is $\mathcal{O}(e^{-c/\h})$, for some $c>0$,
uniformly near $x_0$. Hence, by solving a $\dbar$ problem, one can
find a holomorphic function $v$ near $x_0$ such that
$A_\Gamma u = v + \mathcal{O}(e^{-c/\h})$. Such a $v$ is unique modulo
$\mathcal{N}$. Hence we will slightly abuse notation and write
$v=A_\Gamma u$.

Note that the size $r>0$ depends on the domain of definition of
$u_\h$. However, if $u_\h\in H_{\Phi,x_0}$, the choice of $r$ and $R$
only modifies $A_\Gamma u(x) $ by a negligible term in $\mathcal{N}$.
Moreover, if $u_\h = v_\h$ in $\widetilde H_{\Phi, x_0}$, then
$A_\Gamma u_\h = A_\Gamma v_\h$ in $\widetilde H_{\Phi, x_0}$. This gives the following statement:
\begin{prop}[{\cite[section 4]{sj-asterisque-82}, \cite[section 2.5]{h-sj-minicours}}]
  Formula~\eqref{equ:pseudo-diff-complexe} defines an operator
  $A:\widetilde H_{\Phi, x_0} \to \widetilde H_{\Phi, x_0}$. It is
  called a complex pseudo-differential operator.
\end{prop}
If $a_\h=1$, then $Au_\h = u_\h$ in $\widetilde H_{\Phi,x_0}$, which
can be viewed as a version of the Fourier inversion formula.

The symbol of $A$ is the function $\sigma_A$ defined for
$(x, \theta) \in \Vois \left( (x_0, \theta_0); \CM^{2n}\right)$ by the
formula:
\[
\sigma_A(x, \theta) = e^{-i x\cdot \theta/ \hbar} A \left( e^{ i (
    \cdot)\cdot \theta/ \hbar} \right) .
\]
Then $\sigma_A\in H_{0,(x_0,\theta_0)}$. If $a_\h$ does not depend on
the variable $y$, then $\sigma_A(x,\theta) \sim_a a_\h$ in
$H_{0,(x_0,\theta_0)}$. If $\Phi\in C^\infty$ and $a_\h$ is a
classical analytic symbol of order zero, then by the stationary phase
lemma and the use of good contours~\cite[Lemme 3.2]{sj-asterisque-82},
\cite[Theorem 2.3.3, Lemma 2.4.2]{h-sj-minicours}, $\sigma_A$ is also
a classical analytic symbol of order zero. Moreover, if the formal
series associated with $a_\h$ by~\eqref{eq_classical_symbol} is zero
(which means that all $a_j$'s are identically zero) then the formal
series associated with $\sigma_A$ is also zero, in the same sense. We
will see in Section~\ref{sec:from-hbar-pseudo} that the converse
statement holds as well.

An important particular class of analytic pseudo-differential
operators concern the case where the symbol $a_\h$ has the form
$a_\h(x,y,\theta)=b^{\w}_\h(\frac{x+y}{2},\theta)$, for a classical
analytic symbol
$b^{\w}_\h\in S^0(\Vois\left( x_0, \theta_0 := \frac{2}{i}
  \frac{\partial \Phi}{\partial x} (x_0) \right))$.
As in Proposition~\ref{prop_transfor_Bargmann_et_quantif_weyl}, we
obtain the so-called complex Weyl quantization, namely:
\begin{equation}
  \label{equ:complex-weyl} 
  \Op^{\w}_\h(b^{\w}_{ \hbar}) u(x) = 
  \dfrac{1}{(2 \pi \hbar)^n} \intint_{\Gamma(x)} e^{\frac{i}{\h}(x-y) \cdot \theta} 
  b^{\w}_{ \hbar} \left( \dfrac{x+y}{2}, \theta \right) u(y) \DD y \DD \theta .
\end{equation}

\subsection{Brg-quantization}
\label{ssec:brg-quantization}

We explain here how formula~\eqref{equ:quadratic-projection}, extended
to non-quadratic phase functions, naturally leads to a general local
quantization scheme. For achieving this, we describe our new class of
operators via a general ansatz for their Schwartz kernel, which was
first introduced in the setting of smooth symbols on compact Kähler
manifolds by Charles~\cite{charles-toeplitz}. This ansatz was also
crucial for the generalization of Berezin-Toeplitz operators on
symplectic manifolds~\cite{Charles2016}. It improves on initial ideas
by Berezin, Lieb and Simon, who used particular forms of it in order
to obtain operator bounds, see~\cite[Definitions (2.2), (2.4)]{Simon80}.

Let $x_0\in\CM^n$, and let $\Phi$ be a real-analytic function defined
in a neighbourhood of $x_0$.  We view $\CM^n$ as a totally real
subspace of $\CM^{2n}$ via the embedding in the anti-diagonal
$ \Lambda = \lbrace (x, \overline{x}); x \in \CM^n \rbrace$.  The map
$(x,\bar x) \mapsto \Phi(x)$ admits a holomorphic extension to a
neighbourhood of $(x_0,\bar x_0 )$ in $\CM^{2n}$. We denote this
extension by $\psi(x,w)$; thus $\psi(x,\bar x)=\Phi(x)$, and we have
\begin{equation}
  \psi(x, w) = \overline{\psi(\bar w, \bar{x})}\,,
  \label{equ:bar-psi}
\end{equation}
provided of course that the neighbourhood is invariant under the
involution $(x,w)\mapsto(\bar w, \bar x)$.  This follows from the fact
that the identity holds on the maximally totally real subspace
$\Lambda$, on which $\psi$ takes real values. Finally, let us assume
that $\Phi$ is strictly plurisubharmonic: there exists $m>0$ such that
\begin{equation}
  \qquad m\textup{Id} \leq ( \partial^2_{x_i,
    \bar{x}_j} \Phi(x_0) )_{i, j=1}^n.
  \label{equ:spsh}
\end{equation}

\begin{lemm}\label{lemm:phase-brg}
  For $x,y$ near $x_0$ we have
  \begin{equation}
    \Phi(x) + \Phi(y)  - 2 \Re \left( \psi(x, \bar{y}) \right) 
    \asymp |x-y|^2 ,
    \label{equ:estim-phi1}
  \end{equation} 
\end{lemm}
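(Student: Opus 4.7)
The plan is to Taylor-expand $F(x,y) := \Phi(x) + \Phi(y) - 2\Re\psi(x,\bar y)$ in $y$ around $y=x$, identify the leading quadratic term as the Levi form of $\Phi$, and conclude by the strict plurisubharmonicity assumption~\eqref{equ:spsh}.

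First, using~\eqref{equ:bar-psi} (set $w=\bar y$ in $\psi(x,w)=\overline{\psi(\bar w,\bar x)}$), one gets $\overline{\psi(x,\bar y)}=\psi(y,\bar x)$, hence
\[
F(x,y) = \Phi(x) + \Phi(y) - \psi(x,\bar y) - \psi(y,\bar x),
\]
which is manifestly real. Setting $y=x$ and using the defining identity $\psi(x,\bar x)=\Phi(x)$ gives $F(x,x)=0$. Next, I would differentiate $\psi(x,\bar x)=\Phi(x)$ in Wirtinger derivatives to obtain
\[
\partial_{x_j}\psi(x,\bar x) = \partial_{x_j}\Phi(x), \qquad \partial_{w_j}\psi(x,\bar x) = \partial_{\bar x_j}\Phi(x),
\]
and similar identities for second derivatives in the first and second slots of $\psi$.

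Fix $x$ near $x_0$ and view $F$ as a function of $y$. Since $\psi(x,\bar y)$ is anti-holomorphic in $y$ and $\psi(y,\bar x)$ is holomorphic in $y$, the chain rule and the identities just stated immediately give $\partial_{y_j}F|_{y=x}=\partial_{\bar y_j}F|_{y=x}=0$. For the pure second derivatives, $\partial^2_{y_j y_k}F|_{y=x}=\partial^2_{x_j x_k}\Phi(x)-\partial^2_{x_jx_k}\psi(x,\bar x)=0$ (and analogously for $\bar y\bar y$), where I differentiate twice in the first slot of $\psi$ the identity $\partial_{x_k}\psi(x,\bar x)=\partial_{x_k}\Phi(x)$. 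The only surviving Hessian contribution is mixed:
\[
\partial^2_{y_j \bar y_k}F|_{y=x} = \partial^2_{y_j\bar y_k}\Phi(y)|_{y=x} = \partial^2_{x_j\bar x_k}\Phi(x),
\]
since the corresponding second derivatives of $\psi(x,\bar y)$ and $\psi(y,\bar x)$ vanish by their (anti)holomorphy. Taylor's formula then yields
\[
F(x,y) = \sum_{j,k}\partial^2_{x_j\bar x_k}\Phi(x)\,(y-x)_j\,\overline{(y-x)_k} + O(|y-x|^3).
\]

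To finish, I would use the hypothesis~\eqref{equ:spsh} together with continuity of the second derivatives of the real-analytic function $\Phi$: shrinking the neighborhood of $x_0$, the Hermitian matrix $(\partial^2_{x_j\bar x_k}\Phi(x))$ remains bounded above and below by positive multiples of the identity. Choosing the neighborhood even smaller to absorb the cubic remainder into half of the lower bound gives $F(x,y)\asymp|x-y|^2$. There is no serious obstacle here beyond careful bookkeeping of Wirtinger derivatives for the two slots of $\psi$; the whole argument is a second-order Taylor expansion identifying the quadratic part of $F$ with the Levi form of $\Phi$.
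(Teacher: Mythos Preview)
Your argument is correct: the Wirtinger-derivative bookkeeping is accurate, the vanishing of the value, gradient, and pure second derivatives at $y=x$ is exactly as you claim, and the mixed Hessian is indeed the Levi form $(\partial^2_{x_j\bar x_k}\Phi(x))$. Absorbing the cubic remainder using~\eqref{equ:spsh} and continuity is routine.

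The paper reaches the same conclusion by a slightly different device: instead of a Taylor expansion with remainder, it writes an \emph{exact} integral identity. Setting $x_t=x+t(y-x)$ and $f(t)=\psi(x,\bar x_{1-t})+\psi(y,\bar x_t)$, one has $f(1)-f(0)=\Phi(x)+\Phi(y)-2\Re\psi(x,\bar y)$, and two applications of the fundamental theorem of calculus give
\[
\Phi(x)+\Phi(y)-2\Re\psi(x,\bar y)=\int_0^1\!\!\int_0^1 \partial_x\partial_{\tilde w}\psi(x_s,\bar x_t)\,(x-y)\cdot\overline{(x-y)}\,\DD s\,\DD t.
\]
Since $\partial_x\partial_{\tilde w}\psi(x_s,\bar x_t)=\partial^2_{x\bar x}\Phi(x_0)+\O(|x-x_0|+|y-x_0|)$, the estimate follows immediately with no cubic remainder to handle separately. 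Your Taylor approach is equally valid and perhaps more transparent; the paper's integral formula is a bit cleaner in that it produces the quadratic form exactly, with the error already built into the integrand rather than left as an $O(|x-y|^3)$ term.
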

\begin{proof}
  This follows from a Taylor expansion of $\psi$ around
  $(x_0, \bar x_0)$.
\end{proof}

\begin{defi}
  \label{defi:brg-quantization}
  Let $\tilde r > r>0$. Let
  $a_\h\in L^\infty(B((x_0,\bar x_0),\tilde r))$. We define the
  operator $\Opbrg_r(a_\h)$ locally near $x_0$ by the following
  integral representation. For $x\in\CM^n$ with
  $\abs{x-x_0}<\tilde r - r$, and $u\in L^1(B(x_0,\tilde r))$,
  \begin{equation}
    [\Opbrg_{r}(a_\h) u] (x) = \int_{B(x,r)} k_{ \hbar}(x, y) u(y) L(\DD y),
    \label{equ:kernel-brg}
  \end{equation}
  where the kernel $k_{ \hbar}$ is defined as follows, for $(x, y)$
  such that $|x-y| < r$:
  \[
  k_{ \h}(x, y) = \frac{2^n}{(\pi\h)^n} e^{\frac{2}{\h}( \psi(x,
    \overline{y}) - \Phi(y))} a_{ \hbar}(x,\overline{y}) \det
  \left(\partial_{\tilde w}\partial_x \psi \right)(x, \overline{y})
  \]
\end{defi}
Note that
$(x,y)\mapsto \det \left(\partial_{\tilde w}\partial_x \psi \right)
(x, y)$
is the holomorphic extension to a neighbourhood of $(x_0,\bar x_0)$ of
the real-analytic map
$(x,\bar x)\mapsto \det ( \partial^2_{x_i, \bar{x}_j} \Phi(x) )_{i,
  j=1}^n$.
Under the assumptions of Lemma~\ref{lemm:phase-brg}, and choosing a
smaller $r$ if necessary, there exists $\epsilon>0$ such that
\[
- \Phi(x) + 2 \Re \left( \psi(x, \overline{y}) \right) - \Phi(y) \leq
- \left(m - \epsilon \right) |x-y|^2 ,
\]
and hence
\begin{equation}
  e^{- \Phi(x)/ \hbar} \left| k_{ \h}(x, y) \right| e^{ \Phi(y)/
    \hbar}\leq \frac{2^n}{(\pi\h)^n} \abs{a_\h(x,\bar y)} e^{-(1/
    \hbar)(m - \epsilon) |x-y|^2},
  \label{equ:brg-bon-contour}
\end{equation}
which is similar to~\eqref{equ:kernel-pseudo}. By the same arguments
as the ones used there, we see that $\Opbrg(a_\h)= \Opbrg_r(a_\h)$
defines an operator on $\widetilde H_{\Phi, x_0}$, which does not
depend on $r$ small enough.

This `$\Brg$-quantization' is a natural generalization of
Formula~\eqref{equ:quadratic-projection} when the weight is quadratic:
in this special case, we get formally $\Pi_\Phi = \Opbrg_\infty(1)$,
and Berezin-Toeplitz operators can be obtained when $a_\h$ only
depends on $y$.

\subsection{Analytic Fourier integral operators}
\label{sec:fio}

In this section, we recall the definition of semiclassical Fourier
integral operators in the complex domain, and prove that, under a
transversality condition, they act on spaces of germs holomorphic
functions $H_{\Phi,y_0}\to H_{\widetilde \Phi, x_0}$, modulo
exponentially small remainders.

We want to give a meaning to the formal expression
\begin{equation}
  \label{equ:formal-fio}
  Au(x) = \intint e^{\frac{i}{\h}\phy(x,y,\theta)}a(x,y,\theta;\h) u(y)
  \DD y \DD \theta,
\end{equation}
where $a$ is an analytic symbol defined near $(x_0,y_0,\theta_0)$, and
$\phy$ is a non-degenerate holomorphic phase function, as follows. Let
$\phy(x,y,\theta)$ be holomorphic in a neighbourhood of
$(x_0,y_0,\theta_0)\in \CM^m\times \CM^n\times \CM^N$. Assume that
$\phy'_\theta(x_0,y_0,\theta_0)=0$. Recall that $\phy$ is a
\emph{non-degenerate phase function} (in the sense of
Hörmander~\cite{FIO1}) if the map $\phy'_\theta$ is a local
submersion, \emph{i.e.}
\begin{equation}
  \label{equ:ND-hormander}
  \DD \partial_{\theta_1}\phy(x_0,y_0,\theta_0),\dots,\DD \partial_{\theta_N}\phy(x_0,y_0,\theta_0)
  \text{  are linearly independent.}
\end{equation}
Then
\[
C_\phy := \{ (x,y,\theta)\in \Vois((x_0,y_0,\theta_0), \CM^{m+n+N});
\quad \phy'_\theta(x,y,\theta)=0)\}
\]
is a complex manifold of codimension $N$. Moreover, the map
\begin{equation}
  C_\phy \ni (x,y,\theta)\mapsto (x,\partial_x\phy(x,y,\theta); y,
  -\partial_y\phy(x,y,\theta)) \in T^*\CM^m \times T^*\CM^n
  \label{equ:Cphy}
\end{equation}
has injective differential and hence the image $\Lambda'_\phy$ is a
complex manifold (defined near $(x_0,\xi_0; y_0,\eta_0)$, with
$\xi_0:=\partial_x\phy(x_0,y_0,\theta_0)$ and
$\eta_0:=-\partial_y\phy(x_0,y_0,\theta_0)$) of dimension $m+n$; thus,
in view of~\eqref{equ:Cphy}, $\Lambda'_\phy$ is a holomorphic
canonical relation. This general setting for Fourier integral
operators is well known, since~\cite{FIO1} (see
also~\cite{duistermaat-oif}), at least in the $\Cinf$ setting. The
adaptation to analytic semiclassical analysis was done
in~\cite{sj-asterisque-82}, and necessitates substantial
modifications. It is not our goal here to recall the general theory,
but instead we want to point out a application of transversality that
we could not find elsewhere in the literature, as follows.

In order to have a well defined operator $A$ acting on holomorphic
functions, we don't require the relation $\Lambda'_\phy$ to be a
diffeomorphism, but we strengthen the
assumption~\eqref{equ:ND-hormander} to
\begin{equation}
  \label{equ:ND-strong}
  (y,\theta)\mapsto \phy(x_0,y,\theta) \text{ is a non-degenerate phase function near } (y_0,\theta_0).
\end{equation}
Equivalently, the map
\[
\Lambda'_\phy \ni (x,\xi;y,\eta) \mapsto x
\]
is a local submersion (which implies that $\Lambda'_\phy\cap\{x=x_0\}$
is a complex manifold of dimension $n$) and the map
\begin{equation}
  \label{equ:immersion}
  \Lambda'_\phy\cap \{x=x_0\} \ni (x_0,\xi;y,\eta) \mapsto (y,\eta)
\end{equation}
is a local immersion. The image of~\eqref{equ:immersion}, namely
$(\Lambda'_\phy)^{-1}(T^*_{x_0}\CM^n)$, is a complex Lagrangian
manifold in $T^*\CM^n$.

\begin{prop}
  \label{prop:fio}
  Let $\Phi$ be a pluriharmonic function defined near
  $y_0\in\CM^n$. Let
  $\Lambda_\Phi:=\{(y,\frac{2}{i}\partial_y\Phi(y));
  y\in\Vois(y_0,\CM^n)\}$,
  and $\eta_0:=\frac{2}{i}\partial_y \Phi(y_0)$. Assume that $\phy$
  satisfies~\eqref{equ:ND-strong}, so that
  $(\Lambda'_\phy)^{-1}(T^*_{x_0}\CM^m)$ and $\Lambda_\Phi$ both are
  complex Lagrangian manifolds passing through $(y_0,\eta_0)$. Assume
  \begin{equation}
    \label{equ:transv-inter}
    (\Lambda'_\phy)^{-1}(T^*_{x_0}\CM^m) \text{ and } \Lambda_\Phi
    \text{ intersect transversally at } (y_0,\eta_0).
  \end{equation}
  Then $A$ is a well-defined operator
  $\widetilde H_{\Phi,y_0}\to \widetilde H_{\widetilde\Phi, x_0}$,
  where $\widetilde\Phi$ is a pluriharmonic function defined near
  $x_0$ with the property
  \begin{equation}
    \label{equ:fio-lagrangian}
    \Lambda_{\widetilde\Phi} = \Lambda'_\phy(\Lambda_\Phi)\,.
  \end{equation}
\end{prop}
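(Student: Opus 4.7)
The plan is to turn~\eqref{equ:formal-fio} into a holomorphic contour integral with a non-degenerate critical point, and then apply complex stationary phase / steepest descent in the style of~\cite{sj-asterisque-82}. Since $\Psi$ is pluriharmonic near $y_0$, we may locally write $\Psi = \Re F$ with $F$ holomorphic; then $v_\h(y) := u_\h(y) e^{-F(y)/\h}$ is a germ of a holomorphic function with $v_\h = \O(e^{\epsilon/\h})$ for every $\epsilon>0$. Rewriting
\begin{equation*}
  Au_\h(x) = \intint e^{\frac{i}{\h}(\phy(x,y,\theta) - iF(y))}\, a(x,y,\theta;\h)\, v_\h(y)\, \DD y\, \DD\theta
\end{equation*}
brings us to an analytic FIO with holomorphic phase $\phy - iF$ acting on essentially bounded holomorphic data.

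The critical point equations in $(y,\theta)$ for the phase $\phy - iF$ are $\partial_\theta\phy = 0$ and $\partial_y\phy = i\partial_y F$. Since $\frac{2}{i}\partial_y\Psi = -i\partial_y F$, these equations say exactly that $(x,y,\theta)\in C_\phy$ and $(y,-\partial_y\phy(x,y,\theta))\in\Lambda_\Psi$; for $x=x_0$ they describe the intersection $(\Lambda'_\phy)^{-1}(T^*_{x_0}\CM^m)\cap\Lambda_\Psi$, which contains $(y_0,\eta_0)$ by construction. Hypothesis~\eqref{equ:transv-inter} is therefore equivalent to non-degeneracy of the complex Hessian of $(y,\theta)\mapsto (\phy - iF)(x_0,y,\theta)$ at $(y_0,\theta_0)$. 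By the holomorphic implicit function theorem, for $x$ near $x_0$ there is then a unique holomorphic critical point $(y(x),\theta(x))$; set $G(x) := \phy(x,y(x),\theta(x)) - iF(y(x))$, holomorphic in $x$, and define $\widetilde\Psi(x) := -\Im G(x)$, which is then pluriharmonic. The chain rule combined with the critical equations gives $\partial_x G(x) = \partial_x\phy(x,y(x),\theta(x))$, whence $\frac{2}{i}\partial_x\widetilde\Psi(x) = \partial_x\phy(x,y(x),\theta(x))$; in view of~\eqref{equ:Cphy} this is exactly~\eqref{equ:fio-lagrangian}.

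It remains to show that $Au_\h$ represents a class in $\widetilde H_{\widetilde\Psi,x_0}$. Following the good-contour method of~\cite{sj-asterisque-82}, we realize the $(y,\theta)$-integration as a totally real contour in $\CM^{n+N}$ passing through $(y(x),\theta(x))$ along which, by the holomorphic Morse lemma applied to $(\phy - iF)(x,\cdot,\cdot) - G(x)$, there is $c>0$ such that
\begin{equation*}
  \Re\bigl(i\bigl((\phy - iF)(x,y,\theta) - G(x)\bigr)\bigr) \leq -c\bigl(\abs{y-y(x)}^2 + \abs{\theta-\theta(x)}^2\bigr).
\end{equation*}
Dividing by $\h$, exponentiating, and integrating against $a\,v_\h$ yields $\abs{Au_\h(x)} \leq C_\epsilon\, e^{(\widetilde\Psi(x)+\epsilon)/\h}$ locally uniformly near $x_0$. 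Contour independence modulo $\N$ follows from a Stokes argument exactly as in Section~\ref{sec:analyt-pseudo-diff}, and the $\dbar$-correction procedure recalled below~\eqref{equ:pseudo-diff-complexe} replaces $Au_\h$ by a genuinely holomorphic representative in $\widetilde H_{\widetilde\Psi,x_0}$.

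The main obstacle is the construction of that good contour: the holomorphic Morse normal form only delivers non-degeneracy over $\CM$, whereas we need a \emph{real} contour on which $\Re(i\cdot(\text{phase}))$ decays quadratically away from the critical point. Passing from complex non-degeneracy (provided by~\eqref{equ:transv-inter}) to real negative-definiteness on a totally real contour is the technically delicate, but by now standard, heart of the complex stationary phase method; in practice one chooses the contour tangent at the critical point to the stable real subspace of the Morse quadratic form. Once this is in hand, the remaining verifications are bookkeeping within the framework of Section~\ref{sec:analyt-pseudo-diff}.
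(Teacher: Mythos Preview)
Your proof is correct and follows essentially the same route as the paper: identify the non-degenerate critical point of $(y,\theta)\mapsto -\Im\phy(x,y,\theta)+\Psi(y)$ via the transversality hypothesis~\eqref{equ:transv-inter}, define $\widetilde\Psi$ as the critical value, verify~\eqref{equ:fio-lagrangian} by the chain rule, and realize $Au$ as an integral over a good contour through the critical point. The only cosmetic difference is that you first absorb $\Psi=\Re F$ into a holomorphic phase $\phy-iF$, whereas the paper works directly with the real-valued function $-\Im\phy+\Psi$; the two formulations are equivalent since $-\Im\phy+\Psi=\Re\bigl(i(\phy-iF)\bigr)$.
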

\begin{proof}
  For $x$ close to $x_0$, $ (\Lambda'_\phy)^{-1}(T^*_{x}\CM^m)$ and
  $\Lambda_\Phi$ intersect transversally at a unique point
  $(y(x),\eta(x))$, and because of~\eqref{equ:immersion}, there is a
  corresponding unique point
  $(x,\xi(x); y(x),\eta(x))\in \Lambda'_\phy$. Here $\xi(x)$, $y(x)$,
  $\eta(x)$ are holomorphic functions of $x$. Thus
  $\Lambda'_\phy(\Lambda_\Phi)$ is a complex manifold of dimension
  $m$, given by
  \[
  \Lambda'_\phy(\Lambda_\Phi) = \{(x,\xi(x)); \quad
  x\in\Vois(x_0;\CM^m)\}.
  \]
  The assumptions~\eqref{equ:ND-strong} and~\eqref{equ:immersion}
  imply that the pluri-harmonic function
  \begin{equation}
    \label{equ:fio-phase}
    (y,\theta) \mapsto -\Im \phy(x,y,\theta) + \Phi(y)
  \end{equation}
  has a unique non-degenerate critical point $(y(x), \theta(x))$,
  necessarily of signature $(n+N,n+N)$, near $(y_0,\theta_0)$,
  depending holomorphically on $x$ near $x_0$. Here $y(x)$ is the same
  as before and if we denote by $\widetilde \Phi(x)$ the corresponding
  critical value:
  \[
  \widetilde\Phi(x) := \textup{vc}_{(y,\theta)} ( -\Im
  \phy(x,y,\theta) + \Phi(y)),
  \]
  then we see that $\frac{2}{i}\partial_x\widetilde\Phi(x) = \xi(x) $
  is the point defined above. Thus~\eqref{equ:fio-lagrangian} holds.

  Next consider formally $Au$ in~\eqref{equ:formal-fio} for
  $u\in H_{\Phi,y_0}$. Then
  \[
  \abs{e^{\frac{i}{\h}\phy(x,y,\theta)}a(x,y,\theta;\h) u(x)} \leq
  C_\epsilon e^{\frac{1}{\h}(\epsilon-\Im\phy(x,y,\theta) + \Phi(y))},
  \quad \forall \epsilon>0
  \]
  and since~\eqref{equ:fio-phase} has a non-degenerate critical point
  $(y(x),\theta(x))$, of signature $(n+N,n+N)$, we know that we can
  find a good contour $\Gamma(x)$, \emph{i.e.} a real submanifold of
  dimension $n+N$, passing through $(y(x),\theta(x))$ along which
  \[
  -\Im \phy(x,y,\theta) + \Phi(y) - \widetilde\Phi(x) \asymp
  -\abs{y-y_0}^2 - \abs{\theta-\theta_0}^2\,.
  \]
  It then suffices to define
  \begin{equation}
    \label{equ:fio-contour}
    Au(x) = \intint_{\Gamma(x)}
    e^{\frac{i}{\h}\phy(x,y,\theta)}a(x,y,\theta;\h) u(y)
    \DD y \DD \theta\,,
  \end{equation}
  and argue as we did for analytic pseudo-differential operators
  (Section~\ref{sec:analyt-pseudo-diff}) to obtain that
  $A:\widetilde H_{\Phi,y_0}\to \widetilde H_{\widetilde\Phi, x_0}$ is
  well-defined.
\end{proof}

\begin{remark}
  The more general case where $\Phi$ is plurisubharmonic can certainly
  be treated with some additional arguments.
\end{remark}

As an application of this proposition, we can compose Fourier integral
operators. If
$A:\widetilde H_{\Phi,y_0}\to \widetilde H_{\widetilde\Phi, x_0}$ is
as in Proposition~\ref{prop:fio}, let $K_A$ be the corresponding
canonical relation, so far denoted $\Lambda'_\phy$. Let
$z_0\in\CM^\ell$ and let
$B:\widetilde H_{\widetilde \Phi,x_0}\to \widetilde H_{\hat\Phi, z_0}$
be a Fourier integral operator which satisfies the same assumption as
$A$ with $\widetilde\Phi, \Phi$ replaced by
$\hat\Phi, \widetilde\Phi$. Let $K_B$ be the canonical relation of
$B$, and let $(z_0,\zeta_0;x_0,\xi_0)\in K_B$. By
Proposition~\ref{prop:fio}, the composition
$B\circ A:\widetilde H_{\Phi,y_0}\to \widetilde H_{\hat\Phi, z_0}$ is
well-defined. The condition~\eqref{equ:transv-inter} for $B$ says that
\begin{equation}
  \label{equ:KA-KB}
  K_B^{-1}(T^*_{z_0}\CM^\ell) \text{ and } \Lambda_{\widetilde\Phi}
  \text{ intersect transversally at } (x_0,\xi_0),
\end{equation}
and in view of~\eqref{equ:fio-lagrangian} we have
$\Lambda_{\widetilde\Phi} =
K_A(\Lambda_\Phi)$. Hence~\eqref{equ:KA-KB} is equivalent to
\begin{gather}
  \label{equ:KA-KB2}
  \left(K_B\cap(T_{z_0}^*\CM^\ell \times T^* \CM^m)\right) \times
  \left(K_A\cap(T^*\CM^m\times \Lambda_{\Phi})\right) \text{ and }\\
  T_{z_0}^*\CM^\ell \times \textup{diag}(T^* \CM^m \times T^* \CM^m)
  \times \Lambda_\Phi\\
  \text{ intersect transversally in } \quad T^*\CM^\ell \times T^*
  \CM^m \times T^* \CM^m \times \Lambda_\Phi\,.
\end{gather}
This implies the classical transversality condition for the
composition $B\circ A$:
$T^*\CM^\ell \times \textup{diag}(T^* \CM^m \times T^* \CM^m) \times
T^*\CM^n$
and $K_B\times K_A$ intersect transversally in
$T^*\CM^\ell \times T^* \CM^m \times T^* \CM^m \times T^*\CM^n$.
Therefore, if in addition to~\eqref{equ:formal-fio} we write
\begin{equation}
  Bv(z) = \intint e^{\frac{i}{\h}\psi(z,x,\omega)}b(z,x,\omega;\h) v(x)
  \DD x \DD \omega\,,
\end{equation}
where $\psi$ is a non-degenerate phase function defined near
$(z_0,x_0,\omega_0)$, then we know that $B\circ A$ is an analytic
Fourier integral operator for which
$\psi(z,x,\omega)+\phy(x,y,\theta)$ is a non-degenerate phase function
with $z,y$ as base variables and $x,\omega,\theta$ as fibre variables
and that the canonical relation $K_{B\circ A}$ is equal to
$K_B\circ K_A$.

\section{Equivalence of quantizations}
\label{sec:proof}
One of the main results of this work is to show that, in the
semiclassical limit, operators of the form $ \Op^{\Brg}(a_{ \hbar})$
with an analytic weight $\Phi$ can in fact be written, up to
exponentially small terms, as analytic pseudo-differential operators.

\begin{theo} \label{theom} Let
  $ \Phi: \Vois(x_0; \CM^n) \to \mathbb{R}$ be a real-analytic and
  strictly plurisubharmonic function.
  \begin{enumerate}
  \item \label{item:1} Let $a_{ \hbar}(x, w)$ be a classical analytic
    symbol of order zero defined in a neighbourhood of
    $(x_0, \bar{x}_0)$. Then there exists a classical analytic symbol
    $b^{\w}_{ \hbar}(x, \theta)$ of order zero defined in a
    neighbourhood of
    $ \left( x_0, \theta_0:= \frac{2}{i} \frac{\partial \Phi}{\partial
        x}(x_0) \right)$ such that
    \[
    \Op^{\Brg}(a_{ \hbar}) u(x) \hnegl \Op^{\w}_\h(b^{\w}_{ \hbar})
    \quad : H_{ \Phi, x_0} \to H_{ \Phi, x_0}\,.
    \]
  \item \label{item:2} Let $b^{\w}_{ \hbar}(x, \theta)$ be a classical
    analytic symbol of order zero defined in a neighbourhood of
    $ \left( x_0, \theta := \frac{2}{i} \frac{\partial \Phi}{\partial
        x}(x_0) \right)$.
    Then there exists a classical analytic symbol $a_{ \hbar}(x, w)$
    of order zero defined in a neighbourhood of $(x_0, \bar{x}_0)$
    such that
    \[
    \Op^{\w}_\h(b^{\w}_{ \hbar}) \hnegl \Op^{\Brg}(a_{ \hbar}) \quad :
    H_{ \Phi, x_0} \to H_{ \Phi, x_0}\,.
    \]
  \item In case (\ref{item:1}) (resp. case (\ref{item:2})), the formal
    symbol associated with $b^{\w}_{ \hbar}(x, \theta)$ (resp.
    $a_{ \hbar}(x, w)$) is uniquely determined by the formal symbol
    associated with $a_{ \hbar}(x, w)$ (resp.
    $b^{\w}_{ \hbar}(x, \theta)$).
  \end{enumerate}
\end{theo}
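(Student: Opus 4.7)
The strategy is to interpret both quantizations as analytic Fourier integral operators sharing the same underlying canonical relation, and then read off the correspondence between their symbols by means of complex-analytic stationary phase. First, I would rewrite the Brg operator as an oscillatory integral with a holomorphic phase: replacing $\bar y$ by an independent complex variable $w$ and using $\Phi(y)=\psi(y,\bar y)$, the kernel of Definition~\ref{defi:brg-quantization} becomes
\[
\Op^{\Brg}(a_\h)u(x)=\frac{2^n}{(\pi\h)^n}\iint_{\Gamma_{\Brg}(x)} e^{\frac{i}{\h}\phi_{\Brg}(x,y,w)}\,a_\h(x,w)\det(\partial_w\partial_x\psi)(x,w)\,u(y)\,\DD y\,\DD w,
\]
with $\phi_{\Brg}(x,y,w)=\tfrac{2}{i}(\psi(x,w)-\psi(y,w))$ and the totally real contour $\Gamma_{\Brg}(x)=\{w=\bar y,\ |y-x|<r\}$. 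The critical set $\partial_w\phi_{\Brg}=0$ reduces, by the strict plurisubharmonicity~\eqref{equ:spsh} and the implicit function theorem, to $y=x$, and the associated canonical relation $\Lambda'_{\phi_{\Brg}}$ is the identity relation on $T^*\CM^n$ near $(x_0,\theta_0)$. The complex Weyl phase $(x-y)\cdot\theta$ also produces the identity canonical relation, so that in both cases the preimage of $T^*_{x_0}\CM^n$ meets $\Lambda_\Phi$ transversally at the single point $(x_0,\theta_0)$, in accordance with condition~\eqref{equ:transv-inter}.

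To produce $b^{\w}_\h$ in part~\eqref{item:1}, I would test $\Op^{\Brg}(a_\h)$ against the family of complex exponentials $u_\theta(y):=e^{iy\cdot\theta/\h}$ for $\theta$ close to $\theta_0=\tfrac{2}{i}\partial_x\Phi(x_0)$, mimicking the symbol-extraction procedure of Section~\ref{sec:analyt-pseudo-diff}, and set
\[
b^{\w}_\h(x,\theta):=e^{-ix\cdot\theta/\h}\,\Op^{\Brg}(a_\h)u_\theta(x).
\]
The integrand carries the holomorphic phase $\phi_{\Brg}(x,y,w)+(y-x)\cdot\theta$ in the variables $(y,w)$; its critical system $\partial_w\psi(x,w)=\partial_w\psi(y,w)$, $\tfrac{2}{i}\partial_1\psi(y,w)=\theta$ has a unique holomorphic solution $(y_c(x,\theta),w_c(x,\theta))$ close to $(x_0,\bar x_0)$ with non-degenerate Hessian. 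The analytic stationary phase method of \cite{sj-asterisque-82}, applied after deforming $\Gamma_{\Brg}(x)$ to a good complex contour through that critical point, then yields $b^{\w}_\h(x,\theta)$ as a classical analytic symbol of order zero whose leading coefficient equals $a_0(x_0,\bar x_0)$ times the appropriate Hessian determinant, and whose higher-order coefficients are obtained by applying increasing powers of the Hessian differential operator to $a_\h$.

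The equivalence $\Op^{\Brg}(a_\h)\hnegl\Op^{\w}_\h(b^{\w}_\h)$ on $H_{\Phi,x_0}$ then follows because, by construction, both operators agree when tested on $\{u_\theta\}$, and in the analytic pseudo-differential calculus of Section~\ref{sec:analyt-pseudo-diff} this family determines the complete formal symbol of any such operator, hence the difference has a vanishing formal symbol and is H-negligible. Part~\eqref{item:2} is the symmetric direction: given $b^{\w}_\h$, one tests instead on reproducing-kernel states $v_w(y)=e^{\frac{2}{\h}\psi(y,w)}$, which are the coherent states natural to the Brg calculus, runs the same stationary phase procedure in reverse, and reconstructs $a_\h(x,w)$ as a classical analytic symbol of order zero. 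The uniqueness statement of part~(3) follows from the same observation: two classical analytic symbols whose associated operators differ by an H-negligible one are annihilated by the symbol-extraction map, hence coincide as formal series.

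The main technical obstacle, I expect, is the complex-analytic stationary phase along the deformed contour. The original contour $\{w=\bar y\}$ is totally real but is not a contour of steepest descent for the combined phase $\phi_{\Brg}(x,y,w)+(y-x)\cdot\theta$, so one must deform it to pass through the holomorphic critical point $(y_c,w_c)$ while keeping the holomorphic extensions of $\psi$, $a_\h$ and the determinantal factor under control in a complex polydisc around $(x_0,\bar x_0)$ large enough to absorb the deformation. Executing this with Gevrey-type constants, in the style of \cite{sj-asterisque-82}, is precisely where the \emph{classical} (as opposed to merely analytic) nature of the symbol is essential, and where the exponential smallness of the remainder modulo $\N$ is ultimately obtained.
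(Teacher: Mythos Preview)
There is a concrete gap in your construction of $b^{\w}_\h$. The quantity $e^{-ix\cdot\theta/\h}\Op^{\Brg}(a_\h)u_\theta(x)$ is the full symbol $\sigma_A(x,\theta)$ of Section~\ref{sec:analyt-pseudo-diff}, not the Weyl symbol: for any amplitude $c$ one has $\sigma_{\Op^{\w}_\h(c)}(x,\theta)=c(x,\theta)+\O(\h)$ with nontrivial lower-order corrections (equivalently, the left and Weyl symbols are related by the operator $e^{\frac{i\h}{2}D_x\cdot D_\theta}$). Hence if you set $b^{\w}_\h:=\sigma_{\Op^{\Brg}(a_\h)}$ and then form $\Op^{\w}_\h(b^{\w}_\h)$, you obtain an operator whose $\sigma$ differs from that of $\Op^{\Brg}(a_\h)$ already at order $\h$; your ``agreement on $\{u_\theta\}$'' fails and the H-negligibility of the difference does not follow. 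To repair this you must invert the map $c\mapsto\sigma_{\Op^{\w}_\h(c)}$ within the class of classical analytic symbols, and that is precisely the content of the $U_t$-deformation carried out in Section~\ref{sec:from-hbar-pseudo}. Your dual proposal for part~(\ref{item:2}) (testing $\Op^{\w}_\h(b^{\w}_\h)$ on $v_w$) has the mirror problem, compounded by the determinantal factor and the $x$-dependence of the Brg amplitude. A subtler second point: you assert that an analytic pseudodifferential operator with vanishing formal $\sigma_A$ is H-negligible; this relies on the injectivity of the amplitude-to-$\sigma_A$ map at the formal level, which the paper obtains only through the same $U_t$ machinery (see the forward reference at the end of Section~\ref{sec:analyt-pseudo-diff}).

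For comparison, the paper does not extract symbols by testing. Kuranishi's trick (Section~\ref{sec:from-brg-operators}) rewrites $\Op^{\Brg}(a_\h)$ \emph{directly} as an operator of the form~\eqref{equ:pseudo-diff-complexe} with explicit three-variable amplitude $b_\h(x,y,\theta)=\tilde J\,W^*a_\h$, bypassing any coherent-state argument; the evolution $U_{1/2}=\exp\bigl(\tfrac{i}{2\h}\h D_\theta\cdot(\h D_y-\h D_x)\bigr)$ then reduces this amplitude to Weyl form, giving $b^{\w}_\h=\gamma^*U_{1/2}\tilde J W^*\pi^*a_\h$ (Proposition~\ref{prop_lien_op_brg_op_weyl}). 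Parts~(\ref{item:2}) and~(3) are handled not by a dual testing procedure but by recognizing that the composite map $\mathbf{S}:a_\h\mapsto b^{\w}_\h$ is itself an elliptic analytic Fourier integral operator acting \emph{on symbol spaces}, with a canonical transformation that preserves the zero section (Proposition~\ref{prop:S}); its microlocal inverse then supplies the reverse direction and the bijectivity of formal symbols simultaneously.
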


The proof of the first assertion of Theorem \ref{theom} is divided
into two parts (Sections~\ref{sec:from-brg-operators} and
\ref{sec:from-hbar-pseudo} below): first, we relate a $\Brg$-operator
to a complex pseudo-differential operator in the sense of Equation
\eqref{equ:pseudo-diff-complexe} and then we relate this last operator
to a complex Weyl pseudo-differential operator.

The second and third assertions of Theorem \ref{theom} are obtained by
showing that the operator $a_\h\mapsto b_\h$ in the first assertion is
in fact an elliptic Fourier Integral Operator and hence can be
microlocally inverted in the analytic category; see
Sections~\ref{sec:FIO} and~\ref{sec:transco}.

\subsection{From Brg-operators to complex
  \texorpdfstring{$\h$}{h}-pseudo-differential operators}
\label{sec:from-brg-operators}

Let $\tilde a_\h(x,w)=a_\h(x,\bar w, w)$, where $a_{ \hbar}(x, y, w)$
is a classical analytic symbol of order zero defined on a
neighbourhood of $ (x_0, x_0, \bar{x}_0)$.  Recall
from~\eqref{equ:kernel-brg} and~\eqref{equ:lebesgue} that we have the
formula, for $u \in H_{ \Phi, x_0}$:
\begin{gather}
  \Op^{\Brg}(\tilde a_{ \hbar}) u(x) = \\
  \dfrac{1}{( 2 \pi \hbar)^{n}} \int_{\Vois(x_0)}
  e^{\frac{2}{\h}\psi(x, \bar{y})} a_{ \hbar}(x, y, \bar{y}) u(y)
  e^{-\frac{2}{\h}\Phi(y)} J(x, \bar{y}) \; (\DD y \wedge \DD
  \bar{y}),
\end{gather}
where
$J(x, \bar{y}) = \det \left(\frac{2}{i}\partial_{\tilde w} \partial_x
  \psi \right)(x, \bar{y})$
(see Section~\ref{ssec:brg-quantization}).  We can rewrite this
formula as follows, for $u \in H_{ \Phi, x_0}$:
\begin{equation}
  \Op^{\Brg}(\tilde a_{ \hbar}) u(x)
  = \dfrac{1}{( 2 \pi \hbar)^{n}} \intint_{\tilde{\Gamma}(x_0)} 
  e^{\frac{2}{\h}(\psi(x, w) - \psi(y, w))} a_{ \hbar}(x, y, w) u(y) J(x, w) \DD y \DD w,
\end{equation}
where $ \tilde{\Gamma}(x_0)\subset \CM^{2n}$ is the integration
contour $\{(y,w) = (y,\bar{y})\}$ for $y$ near $x_0$, and using the
fact that $ \Phi(y) = \psi(y, \bar{y})$.  We perform Kuranishi's trick
and write for
$(x, y, w) \in \Vois(x_0; \CM^{n}) \times \tilde{\Gamma}(x_0)$:
\[
2 \left( \psi(x, w) - \psi(y, w) \right) = i (x-y)\cdot \theta(x, y,
w) ,
\]
where $ \theta$ is holomorphic on
$ \Vois(x_0) \times \tilde{\Gamma}(x_0)$ and satisfies the following
equality, for $(x, y, w) \in \Vois(x_0) \times \tilde{\Gamma}(x_0)$:
\begin{equation} \label{eq_Kuranishi_trick} \theta(x, y, w) =
  \dfrac{2}{i} \partial_x \psi(x, w) + \mathcal{O}( |x-y|).
\end{equation}
Although we don't use this here, it is often important to see that,
writing $ \theta$ as
\[
\theta(x, y, w) = \int_0^1 \dfrac{2}{i} \partial_x \psi((1-t)y+ tx, w)
\DD t ,
\]
then \eqref{eq_Kuranishi_trick} improves into:
\[
\theta(x, y, w) = \dfrac{2}{i} \partial_x \psi \left( \dfrac{x+y}{2},
  w \right) + \mathcal{O}( |x-y|^2).
\]
Therefore, we can rewrite the operator
$ \Op^{\Brg}(\tilde a_{ \hbar})$ as follows, for
$u \in H_{ \Phi, x_0}$:
\[
\Op^{\Brg}(\tilde a_{ \hbar}) u(x) = \dfrac{1}{( 2 \pi \hbar)^{n}}
\intint_{\tilde{\Gamma}(x_0)} e^{\frac{i}{\h} (x-y)\cdot\theta(x, y,
  w)} a_{ \hbar}(x, y, w) u(y) J(x, w) \DD y \DD w.
\]
We deduce from \eqref{eq_Kuranishi_trick} that for
$(x, y, w) \in \Vois(x_0) \times \tilde{\Gamma}(x_0)$:
\begin{equation}
  \label{equ:kuranishi-derive}
  \partial_w \theta(x, y, w) =
  \dfrac{2}{i} \partial_w \partial_x \psi(x, w) + \mathcal{O}(|x-y|),
\end{equation}
whose determinant is non-vanishing because $ \Phi$ is strictly
plurisubharmonic. Thus, according to the holomorphic implicit function
theorem, the function $w \mapsto \theta(x, y, w)$ admits a holomorphic
inverse in $ \Vois(\bar{x}_0)$. We denote this inverse for
$(x, y, \theta) \in \Vois(x_0, x_0, \theta_0)$ by:
\begin{equation} \label{eqref:w(x,y,theta)} w = w(x, y, \theta) ,
\end{equation}
where
\[
\theta_0 := \tfrac{2}{i} \partial_x \psi(x_0, \bar{x}_0) =
\dfrac{2}{i} \dfrac{\partial \Phi}{\partial x}(x_0)\,.
\]
We want to rewrite the operator $ \Op^{\Brg}(\tilde a_{ \hbar})$ in
terms of the $ \theta$-variable. We have, as holomorphic $2n$-forms,
\begin{align}
  \DD y \wedge \DD \theta 
  & = \det  \left( \partial_w \theta(x, y, w) \right) \DD y \wedge \DD w, \\
  & = \det \left( \dfrac{2}{i} \partial_w \partial_x \psi(x, w) + \mathcal{O}(|x-y|) \right) \DD y \wedge \DD w .
\end{align}
Here, as always in this paper, we use the notation
\[
\DD y \wedge \DD \theta := \bigwedge_{j=1}^n (\DD y_j \wedge \DD
\theta_j)\,, \quad \DD y \wedge \DD w := \bigwedge_{j=1}^n (\DD y_j
\wedge \DD w_j)\,.
\]

Let $\tilde{J}(x, y, \theta)$ be the following quantity, for
$(x, y, \theta) \in \Vois(x_0, x_0, \theta_0)$:
\begin{equation} \label{defi_J} \tilde{J}(x, y, \theta) := \dfrac{J(x,
    w(x, y, \theta))}{\det \left( \partial_w \theta(x, y, w) \right) }
  = (1+\mathcal{O}(x-y))\,,
\end{equation}
so that:
\[
J (x, w) \DD y \wedge \DD w = \tilde{J}(x, y, \theta) \DD y \wedge \DD
\theta \,.
\]
Using~\eqref{eq_Kuranishi_trick} and \eqref{equ:kuranishi-derive}, the
image of $\tilde \Gamma(x_0)$ under $w\mapsto \theta=\theta(x,y,w)$
can be deformed into $\Gamma(x)$ (see~\eqref{equ:Gamma(x)}) in such a
way that 
\[
-\Phi(x) - \Phi(y) + \Re \left( i(x-y)\cdot\theta \right) \asymp
-|x-y|^2 ,
\]
uniformly on all the deformed contours.  Hence we can rewrite the
operator $ \Op^{\Brg}(\tilde a_{ \hbar})$ as follows, for
$u \in H_{ \Phi, x_0}$:
\[
\Op^{\Brg}(a_{ \hbar}) u(x) \sim_a \dfrac{1}{( 2 \pi \hbar)^{n}}
\intint_{\Gamma(x)} \! e^{\frac{i}{\h} (x-y)\cdot\theta} a_{ \hbar}(x,
y, w(x, y, \theta)) u(y) \tilde{J}(x, y, \theta) \DD y \DD \theta,
\]
which is a complex pseudo-differential operator (in the sense of
Equation \eqref{equ:pseudo-diff-complexe}) with symbol, for
$(x, y, \theta) \in \Vois(x_0, x_0, \theta_0= \frac{2}{i} \partial_x
\psi(x_0, \bar{x}_0))$:
\begin{equation} \label{equation_symbol_b} b_{ \hbar}(x, y, \theta) =
  a_{ \hbar}(x, y, w(x,y, \theta)) \tilde{J}(x, y, \theta) \,.
\end{equation}
Let:
\begin{align}
  \label{equ:defi-W}
  W : \Vois(x_0, x_0, \theta_0) 
  & \longrightarrow \Vois \left( x_0, x_0,  \bar{x}_0 \right) \\
  (x, y, \theta) & \longmapsto (x, y, w(x,y,\theta)).
\end{align}
Then, for $(x, y, \theta) \in \Vois \left( x_0, x_0, \theta_0\right)$:
\[
b_{ \hbar}(x, y, \theta) = \tilde{J}(x, y, \theta) a_{ \hbar}(x, y,
w(x, y, \theta)) = \tilde{J}(x, y, \theta) (W^* a_{ \hbar})(x, y,
\theta).
\]
Here $W^*$ denotes the pull-back by $W$, \emph{i.e.}
$W^* a_\h = a_\h \circ W$.  To conclude, we have
\begin{align}
  \Op^{\Brg}(\tilde a_{ \hbar}) u(x) \hnegl B_{\Gamma} u(x) \quad \text{ in } H_{ \Phi, x_0}\,,
\end{align}
where $B_{\Gamma}$ means the quantization (in the sense of Equation
\eqref{equ:pseudo-diff-complexe}) of the classical analytic symbol
$b_{ \hbar} $ defined for
$(x, y, \theta) \in \Vois \left( x_0, x_0, \theta_0 =\frac{2}{i}
  \frac{\partial \Phi}{\partial x}(x_0) \right)$ by:
\[
b_{ \hbar}(x, y, \theta) = \left( \tilde{J} W^* a_{ \hbar} \right) (x,
y, \theta).
\]

\subsection{From complex \texorpdfstring{$\h$}{h}-pseudo-differential
  operators to complex Weyl pseudo-differential operators}
\label{sec:from-hbar-pseudo}

Our goal is now to replace the symbol $b_{ \hbar}(x, y, \theta)$
defined in a neighbourhood of
$\left( x_0, x_0, \theta_0 := \frac{2}{i} \frac{\partial
    \Phi}{\partial x}(x_0) \right)$
by a symbol of the form
$b^{\w}_{ \hbar} \left( \tfrac{x+y}{2}, \theta \right)$ defined in a
neighbourhood of $\left( x_0, \theta_0\right)$ in order to obtain the
complex Weyl quantization.

We first recall how to relate the various quantizations of a symbol
depending on $(y,\theta)$. Let $a_{\hbar, t}(y, \theta)$ be a
classical analytic symbol defined in a neighbourhood of
$\left(x_0, \theta_0 \right)$.  For $t\in|0,1]$, the quantization
$\Op_t$ is defined, for $u \in H_{\Phi, x_0}$, by
\[
\Op_t(a_{\hbar, t}) u(x) = \dfrac{1}{(2 \pi \hbar)^n}
\intint_{\Gamma(x)} e^{\frac{i}{\h}(x-y) \cdot \theta} a_{ \hbar,
  t}(tx+(1-t)y, \theta) u(y) \DD y \DD \theta,
\]
where $ \Gamma(x)$ is defined in Equation \eqref{equ:Gamma(x)}.  When
$t = \frac{1}{2}$, we recover the complex Weyl quantization
(Proposition~\ref{prop_transfor_Bargmann_et_quantif_weyl}).  We now
look for a symbol $a_{ \hbar, t}(y, \theta)$ defined in a
neighbourhood of
$\left(x_0, \theta_0:= \frac{2}{i} \frac{\partial \Phi}{\partial
    x}(x_0) \right)$
such that the operator $ \Op_t(a_{ \hbar, t})$ does not depend on
$t$. Let $u \in H_{ \Phi, x_0}$ and denote $y_t(x,y)=tx+(1-t)y$; we
have:
\[
(2 \pi \hbar)^n \hbar D_t \Op_t(a_{ \hbar, t}) u(x) = \hbar D_t \left(
  \intint_{\Gamma(x)} e^{\frac{i}{\h}(x-y) \cdot \theta} a_{ \hbar,
    t}(y_t(x,y), \theta) u(y) \DD y \DD \theta \right),
\]
\begin{align}
  & =  \intint_{\Gamma(x)}  e^{\frac{i}{\h}(x-y)\cdot \theta}
    \hbar D_t \left( a_{ \hbar, t}(y_t(x,y), \theta) \right) u(y) \DD y \DD \theta, \\
  & =  \intint_{\Gamma(x)} \left( e^{\frac{i}{\h}(x-y)\cdot \theta} 
    \hbar D_t a_{ \hbar, t}   + e^{\frac{i}{\h}(x-y)\cdot \theta} (x-y) 
    \cdot \hbar D_y a_{ \hbar, t}  \right) (y_t(x,y), \theta) u(y) \DD y \DD \theta, \\
  & = \intint_{\Gamma(x)} \left( e^{\frac{i}{\h}(x-y)\cdot \theta}
    \hbar D_t a_{ \hbar, t}  + \hbar D_{ \theta} 
    (e^{\frac{i}{\h}(x-y)\cdot \theta})
    \cdot
    \hbar D_y a_{ \hbar, t} \right)(y_t(x,y), \theta) u(y) \DD y \DD \theta, \\
  & \sim_a  \intint_{\Gamma(x)} e^{\frac{i}{\h}(x-y)\cdot \theta} 
    \left(  \hbar D_t a_{ \hbar, t}  - \hbar D_{ \theta}\cdot  \hbar D_y a_{ \hbar, t} \right) 
    (y_t(x,y), \theta) 
    u(y) \DD y \DD \theta,
\end{align}
where the last equality holds modulo a negligible term, see
Definition~\ref{defi:negligible-germ}, and follows from Stokes'
formula and~\eqref{equ:kernel-pseudo}.  Consequently, the operator
$ \Op_t(a_{ \hbar, t})$ will be independent of the parameter $t$ if
the symbol $a_{\hbar, t}$ satisfies the following condition for
$(y, \theta) \in \Vois \left( x_0, \theta_0 \right)$:
\[
\left( \hbar D_t - \hbar D_{ \theta} \cdot \hbar D_y \right) a_{\hbar,
  t}(y, \theta) = 0.
\]
This will hold if the symbol $a_{ \hbar, t}$ satisfies the following
equality for $(y, \theta) \in \Vois \left( x_0, \theta_0\right)$:
\[
a_{ \hbar, t}(y, \theta) = e^{\frac{i}{\h}(t-s) \hbar D_{ \theta}
  \cdot \hbar D_y} a_{ \hbar, s} (y, \theta)\,.
\]
Similarly to the more general case treated below, the propagator
$e^{-\frac{it}{\h} (-\hbar D_{ \theta} \cdot \hbar D_y)}$ is an
analytic Fourier integral operator, with canonical relation:
\[
\kappa_t : (y, \theta;\, y^*, \theta^*) \mapsto (y-t \theta^*, \theta
- t y^*;\, y^*, \theta^*)\, .
\]
Because $ \kappa_t$ sends the zero section $\theta^* = 0$, $y^* = 0$
on itself, we may apply Proposition~\ref{prop:fio} with $\Phi=0$,
which gives that this propagator sends analytic symbols to analytic
symbols.

\medskip

We now wish to generalize this procedure to a symbol of the form
$b_{ \hbar, t}(x, y, \theta)$, defined for $0 \leq t \leq 1$ in a
neighbourhood of
$\left( x_0, x_0, \theta_0 = \frac{2}{i} \frac{\partial \Phi}{\partial
    x}(x_0) \right)$, and such that
\[
b_{ \hbar, 0}(x, y, \theta) := b_{ \hbar}(x, y, \theta) \quad
\text{defined by Equation \eqref{equation_symbol_b}}.
\]

Let $ \Op_t(b_{ \hbar, t})$ be the following operator for
$u \in H_{ \Phi, x_0}$:
\begin{align}
  & \Op_t(b_{ \hbar, t}) u(x) \\
  &= \dfrac{1}{(2 \pi \hbar)^n}
    \intint_{\Gamma(x)} e^{\frac{i}{\h}(x-y)\cdot \theta} b_{ \hbar, t}
    ((1-t)x+ty, tx+(1-t)y, \theta) u(y) \DD y \DD \theta 
\end{align}
Remark that, when $t = \frac{1}{2}$, we obtain the complex Weyl
quantization (see~\eqref{equ:complex-weyl}) of the symbol
$b^{\w}_{\hbar}$ defined in $\Vois(x_0, \theta_0)$ by
\[ b_{ \hbar, 1/2} \left( \frac{x+y}{2}, \frac{x+y}{2}, \theta \right)
=: b^{\w}_{\hbar} \left( \frac{x+y}{2}, \theta\right).
\]
In order to lighten notation, let
$X_t:=((1-t)x+ty, tx+(1-t)y, \theta)$. Then, for
$u \in H_{\Phi, x_0}$, we have
\[
(2 \pi \hbar)^n \hbar D_t \Op_t(b_{ \hbar, t}) u(x) =
\intint_{\Gamma(x)} e^{\frac{i}{\h}(x-y)\cdot \theta} \hbar D_t \left(
  b_{ \hbar, t} (X_t) \right) u(y) \DD y \DD \theta,
\]
\begin{align}
  & = \intint_{\Gamma(x)} e^{\frac{i}{\h}(x-y)\cdot \theta}
    \Big( \hbar D_t b_{ \hbar, t}(X_t) - (x-y) \hbar D_x b_{ \hbar, t}(X_t) \\
  & \hspace{0.4\linewidth}  + (x-y) \hbar D_y b_{ \hbar, t}(X_t) \Big) u(y) \DD y
    \DD \theta ,\\
  & =  \intint_{\Gamma(x)} \left(e^{\frac{i}{\h}(x-y)\cdot \theta} \hbar D_t b_{ \hbar, t} 
    - \hbar D_{ \theta} ( e^{\frac{i}{\h}(x-y)\cdot \theta} ) \left(\hbar D_x b_{ \hbar, t} 
    - \hbar D_y b_{ \hbar, t}\right)\right) (X_t) u(y) \DD y \DD \theta, \\
  & \sim_a  \intint_{\Gamma(x)} e^{\frac{i}{\h}(x-y)\cdot \theta} \left( \hbar D_t b_{ \hbar, t} + \hbar D_{ \theta}\cdot  \left(\hbar D_x - \hbar D_y \right) b_{ \hbar, t} \right)  (X_t) u(y) \DD y \DD \theta,
\end{align}
using Stokes' formula.  Thus, the operator $ \Op_t(b_{ \hbar, t})$,
acting on $\widetilde H_{\Phi,x_0}$, is independent of the parameter
$t$ if the symbol $b_{ \hbar, t}(x, y, \theta)$ satisfies the
following equality for
$(x, y, \theta) \in \Vois \left(x_0, x_0, \theta_0 \right)$:
\[
\left( \hbar D_t + \hbar D_{ \theta}\cdot \left( \hbar D_x - \hbar D_y
  \right) \right) b_{ \hbar, t}(x, y, \theta) = 0 .
\]
This leads to
\begin{equation}
  b_{ \hbar, t}(x, y, \theta) = U_{t-s} \; b_{ \hbar, s}(x, y, \theta) ,
\end{equation}
where
\[
U_t := \exp \left( -\dfrac{i}{\hbar} t \left( \hbar D_{ \theta} \cdot
    \left( \hbar D_x - \hbar D_y \right)\right) \right).
\]
By taking $ t= \frac{1}{2}$ and $s=0$, we obtain:
\[
b_{ \hbar, 1/2} \left( x, y, \theta \right) = U_{1/2} b_{ \hbar}(x, y,
\theta),
\]
and we recall that the Weyl symbol is defined as
$b_\h^{\w}(x,\theta) = b_{ \hbar, 1/2}(x, x, \theta)$.  Writing $U_t$
as a Fourier multiplier, \emph{i.e.}
\begin{equation}
  \label{equ:Ut}
  U_{t}=\mathcal{F}_\h^{-1}\circ \exp \left(\frac{it}{\h}\theta^* \cdot
    (y^*-x^*) \right)\circ \mathcal{ F}_\h ,
\end{equation}
where $\mathcal{F}_\h$ denotes the usual semiclassical Fourier
transform, we see that it is formally semiclassical analytic Fourier
integral operator (see Section~\ref{sec:fio}) whose principal symbol
is the constant $1$; it is the exponential of the differential
operator $P= \hbar D_{ \theta}\cdot ( \hbar D_x - \hbar D_y)$, acting
on formal analytic symbols.  Its canonical relation is actually the
graph of a symplectic diffeomorphism defined by
\begin{align}
  \kappa_t: T^*  \Vois(x_0, x_0, \theta_0) & \longrightarrow T^*  \Vois(x_0, x_0, \theta_0) \\
  (x, y, \theta;\, x^*, y^*, \theta^*) & \longmapsto (x, y, \theta;\, x^*, y^*, \theta^*) + t \ham{p} 
\end{align}
where $\ham{p}$ is the Hamiltonian vector field associated with the
symbol $p$ of the differential operator $P$, namely
$p(x, y, \theta;\, x^*, y^*, \theta^*) = \theta^*\cdot (x^* - y^*)$
and
$\ham{p} = \theta^*\cdot \partial_x - \theta^* \cdot \partial_y + (x^*
- y^*)\cdot \partial_{ \theta}$. Thus:
\begin{equation} \label{equa_transfo_canonique_kappa_t} \kappa_t: (x,
  y, \theta;\, x^*, y^*, \theta^*) \mapsto (x+ t \theta^*, y-t
  \theta^*, \theta + t(x^*-y^*);\, x^*, y^*, \theta^* ).
\end{equation}
Since $\kappa_t$ is a diffeomorphism, its phase function is strongly
non-degenerate in the sense of~\eqref{equ:ND-strong}.  Because
$ \kappa_t$ sends the zero section $x^*=0$, $y^*=0$, $\theta^*=0$ on
itself, we may apply Proposition~\ref{prop:fio} with $\Phi=0$, which
gives that the Fourier integral operator $U_t$ sends analytic symbols
to analytic symbols of the same order.  Besides, using analytic
stationary phase lemma, we obtain that $U_t$ sends classical analytic
symbols to classical analytic symbols (see also~\cite{sj-96}).  Let:
\begin{align}
  \label{equ:defi-gamma}
  \gamma : \Vois \left( x_0, \theta_0 \right) 
  & \longrightarrow \Vois \left(x_0, x_0, \theta_0  \right) \\
  (x, \theta) & \longmapsto (x, x, \theta).
\end{align}
Then, with $\gamma^*$ denoting pullback, we have:
\[
\gamma^* \left( U_{1/2} b_{ \hbar}(x, y, \theta) \right) = \gamma^*
\left( b_{ \hbar, 1/2} \left( x,y, \theta \right) \right) = b^{\w}_{
  \hbar} \left( x, \theta \right) .
\]
$ \gamma^* U_{1/2} b_{ \hbar}$ is a classical analytic symbol of order
zero that we denote by $b^{\w}_{ \hbar}$. To conclude, taking
$b_{ \hbar, t}$ such that $ \Op_t (b_{ \hbar, t})$ is independent of
$t$, gives us:
\[
\Op_{1/2}(b_{ \hbar, 1/2}) = \Op^{\w}_\h(b_{ \hbar}^{\w}) =
\Op^{\w}_\h(\gamma^* U_{1/2} b_{ \hbar}) \hnegl \Op_0(b_{ \hbar, 0}) =
B_{ \Gamma} .
\]

To summarize, we have the following proposition.

\begin{prop} \label{prop_lien_op_brg_op_weyl} Let
  $a_{ \hbar}(x, y, w)$ be a classical analytic symbol of order zero
  defined on a neighbourhood of $(x_0, x_0, \bar{x}_0)$. Then, on
  $H_{\Phi, x_0}$, we have:
  \[
  \Op^{\Brg}(a_{ \hbar}) \hnegl \Op^{\w}_\h(b^{\w}_{ \hbar}), \quad
  \text{where} \quad b^{\w}_{ \hbar} = \gamma^* U_{1/2} \tilde{J} W^*
  a_{ \hbar},
  \]
  where:
  \[
  \left\lbrace
    \begin{split}
      & W: (x, y, \theta) \mapsto (x, y, w(x, y, \theta)), \\
      & \qquad \text{with $w$ defined in
        Equation~\eqref{eqref:w(x,y,theta)}},\\
      & \gamma: (x, \theta) \mapsto (x, x, \theta), \\
      & U_{1/2} = \exp \left( \tfrac{i}{2 \hbar} \hbar D_{ \theta} \cdot  \left( \hbar D_y - \hbar D_x \right) \right), \\
      & \text{$\tilde{J}$ is defined by Equation \eqref{defi_J}.}
    \end{split}
  \right.
  \]
  Besides, $b^{\w}_{ \hbar}$ is a classical analytic symbol of order
  zero defined on a neighbourhood of
  $\left( x_0, \theta_0 := \frac{2}{i} \frac{\partial \Phi}{\partial
      x} (x_0) \right)$.
  Finally, if $a_\h\sim_a 0$, then $b^{\w}_{ \hbar}\sim_a 0$.
\end{prop}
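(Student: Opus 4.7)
The plan is to simply assemble the two-step reduction that the previous subsections have already carried out, and then check that the passage is functorial enough so that an analytic symbol is mapped to an analytic symbol of the same order, with the vanishing property in the last sentence being preserved.

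First, I would invoke the computation of Section~\ref{sec:from-brg-operators}. Starting from the defining integral~\eqref{equ:kernel-brg} for $\Op^{\Brg}(a_\h)$, I use $\Phi(y)=\psi(y,\bar y)$ to rewrite the exponent as $\tfrac{2}{\h}(\psi(x,w)-\psi(y,w))$ on the contour $\tilde\Gamma(x_0)=\{(y,\bar y)\}$, apply Kuranishi's trick to produce the phase variable $\theta=\theta(x,y,w)$ with $\theta=\tfrac{2}{i}\partial_x\psi(x,w)+\O(|x-y|)$, invert $w\mapsto\theta$ by the holomorphic implicit function theorem (possible because $\Phi$ is strictly plurisubharmonic, so $\partial_w\partial_x\psi(x_0,\bar x_0)$ is invertible), and perform the corresponding contour deformation from $\tilde\Gamma(x_0)$ to $\Gamma(x)$ defined in~\eqref{equ:Gamma(x)}. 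This yields $\Op^{\Brg}(a_\h)\hnegl B_\Gamma$, where $B_\Gamma$ is the complex analytic pseudo-differential operator of the form~\eqref{equ:pseudo-diff-complexe} with symbol $b_\h(x,y,\theta)=\tilde J(x,y,\theta)\,(W^*a_\h)(x,y,\theta)$, with $W$ and $\tilde J$ as defined in~\eqref{equ:defi-W} and~\eqref{defi_J}. Since $W$ is a holomorphic local diffeomorphism and $\tilde J$ is a nonvanishing holomorphic factor, $b_\h$ is a classical analytic symbol of order zero near $(x_0,x_0,\theta_0)$, and $a_\h\sim 0$ forces $b_\h\sim 0$.

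Second, I would invoke Section~\ref{sec:from-hbar-pseudo} to eliminate the $y$-dependence in favor of the midpoint $\tfrac{x+y}{2}$. Solving the transport equation $(\h D_t+\h D_\theta\cdot(\h D_x-\h D_y))b_{\h,t}=0$ with $b_{\h,0}=b_\h$ yields a one-parameter family $b_{\h,t}=U_t b_\h$ such that $\Op_t(b_{\h,t})\hnegl \Op_0(b_{\h,0})=B_\Gamma$ for all $t\in[0,1]$. Specializing to $t=\tfrac12$ and pulling back by $\gamma\colon(x,\theta)\mapsto(x,x,\theta)$ gives the complex Weyl symbol $b^{\w}_\h=\gamma^*U_{1/2}b_\h=\gamma^*U_{1/2}\tilde J W^*a_\h$, so that $\Op^{\w}_\h(b^{\w}_\h)\hnegl B_\Gamma\hnegl\Op^{\Brg}(a_\h)$.

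The one point that requires more than bookkeeping is to check that $U_{1/2}$ preserves the class of classical analytic symbols of order zero, together with the vanishing property. This is the main (mild) obstacle. Following the discussion around~\eqref{equ:Ut} and~\eqref{equa_transfo_canonique_kappa_t}, I would represent $U_t$ as the analytic Fourier integral operator with phase $\theta^*\cdot(y^*-x^*)$, whose canonical transformation $\kappa_t$ in~\eqref{equa_transfo_canonique_kappa_t} preserves the zero section in the cotangent variables; hence Proposition~\ref{prop:fio} applies with $\Psi=0$ and shows that $U_t$ maps $\widetilde H_{0,(x_0,x_0,\theta_0)}$ to $\widetilde H_{0,(x_0,x_0,\theta_0)}$. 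The analytic stationary phase lemma then upgrades this to the preservation of the classical subclass. The pullbacks $W^*$ and $\gamma^*$ and the multiplication by the holomorphic factor $\tilde J$ obviously preserve both properties. Finally, since each of the three operations $a_\h\mapsto W^*a_\h$, $(\,\cdot\,)\mapsto \tilde J(\,\cdot\,)$, and $b_\h\mapsto\gamma^*U_{1/2}b_\h$ is linear and sends $0$ to $0$ at the level of formal series, $a_\h\sim 0$ indeed implies $b^{\w}_\h\sim 0$, which closes the argument.
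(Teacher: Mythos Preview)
Your proposal is correct and follows exactly the same route as the paper: the proposition is stated there as a summary (``To summarize, we have the following proposition'') of the work already carried out in Sections~\ref{sec:from-brg-operators} and~\ref{sec:from-hbar-pseudo}, and your write-up faithfully reproduces that two-step reduction together with the appeal to Proposition~\ref{prop:fio} and analytic stationary phase to justify that $U_{1/2}$ preserves classical analytic symbols. One cosmetic remark: in the last sentence, $a_\h\sim 0$ means $a_\h\in\mathcal N$, and the cleanest justification is that each of $W^*$, multiplication by $\tilde J$, $U_{1/2}$, and $\gamma^*$ is well-defined on the quotient $\widetilde H_{0}$ (this is exactly what Proposition~\ref{prop:fio} gives for $U_{1/2}$), hence preserves $\mathcal N$; your phrasing ``sends $0$ to $0$ at the level of formal series'' is equivalent for classical analytic symbols but slightly oblique.
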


The proof of the converse statement, namely that $a_\h$ is analytic
whenever $b^{\w}_\h$ is analytic, is the goal of the following section
(Proposition~\ref{prop:S}).

\subsection{Composition of Fourier integral operators}
\label{sec:FIO}

Let $b^{\w}_{ \hbar}(x, \theta)$ be a classical analytic symbol of
order zero defined on a neighbourhood of
$ \left( x_0, \theta_0 := \frac{2}{i} \frac{\partial \Phi}{\partial x}
  (x_0) \right)$.
We want to prove that there exists a classical analytic symbol of
order zero $a_{ \hbar}(x, w)$ defined on a neighbourhood of
$(x_0, \bar{x}_0)$ (and which does not depend on the $y$-variable)
such that the Brg-quantization of $a_{ \hbar}$ coincides with the
complex Weyl quantization of $b_{ \hbar}^{\w}$
(see~\eqref{equ:complex-weyl}).  Instead of doing this directly, let
us consider the map
\begin{align}
  \mathbf{S}: \hat S^0(\Vois(x_0, x_0, \bar{x}_0)) 
  & 
    \longrightarrow \hat S^0(\Vois \left(x_0, \theta_0 \right)) \\
  a_{ \hbar} 
  & \longmapsto b^{\w}_{ \hbar} = \gamma^* U_{1/2} \tilde{J} W^* a_{ \hbar},
\end{align}
restricted to the subset of classical analytic symbols of order zero
which do not depend on the $y$-variable.  We already proved in the
previous subsection that this map is well-defined in the sense that it
sends a formal classical analytic symbol of order zero to a formal
classical analytic symbol of order zero. Consequently, it suffices to
prove the following proposition in order to conclude the proof of
Theorem~\ref{theom}.

\begin{prop}
  \label{prop:S}
  The map $\mathbf{S}$ restricted to the set of classical analytic
  symbols which do not depend on the $y$-variable is an analytic
  Fourier integral operator associated with a canonical transformation
  which sends the zero section on itself. Moreover, this Fourier
  integral operator is elliptic.
\end{prop}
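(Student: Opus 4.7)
The plan is to assemble the four constituents of $\mathbf{S}$---pullback $W^*$, multiplication by $\tilde J$, the Fourier multiplier $U_{1/2}$, and the diagonal restriction $\gamma^*$---into a single oscillatory integral acting on $y$-independent symbols, then identify the phase and the canonical relation. Although $\gamma^*$ by itself is not associated with a symplectic diffeomorphism (it reduces dimension) and the input being $y$-independent is itself a Lagrangian-type condition, the composition of all these operations turns out to yield an elliptic analytic FIO associated with a genuine canonical transformation $T^*\Vois(x_0,\bar x_0;\CM^{2n}) \to T^*\Vois(x_0,\theta_0;\CM^{2n})$ that preserves the zero section.

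First I would use~\eqref{equ:Ut} to expand $U_{1/2}$ as a Fourier multiplier and write $\mathbf{S}(a_\h)(x,\theta)$ as a $6n$-dimensional oscillatory integral in the variables $(x',y',\theta',x^*,y^*,\theta^*)$ with phase $(x-x')\cdot x^* + (x-y')\cdot y^* + (\theta-\theta')\cdot\theta^* - \tfrac{1}{2}\theta^*\cdot(x^*-y^*)$, amplitude $\tilde J(x',y',\theta')$ and input $a_\h(x',w(x',y',\theta'))$, after substituting $y=x$ for $\gamma^*$. Integrating over $x^*,y^*$ gives $\delta$-constraints forcing $x'=x-\tfrac{1}{2}\theta^*$ and $y'=x+\tfrac{1}{2}\theta^*$. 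Performing the analytic change of variables $\theta'\mapsto w'=w(x',y',\theta')$, whose Jacobian is non-vanishing by~\eqref{equ:kuranishi-derive} and~\eqref{equ:spsh}, and renaming $\theta^*=2(x-x')$, I arrive at a kernel representation
\begin{equation*}
\mathbf{S}(a_\h)(x,\theta) \sim C_n\h^{-n}\!\intint e^{(i/\h)\tilde\Phi(x,\theta,x',w')}\,\tilde m(x,x',w';\h)\,a_\h(x',w')\,\DD x'\,\DD w',
\end{equation*}
with $\tilde\Phi(x,\theta,x',w')=2(x-x')\cdot(\theta-\theta(x',2x-x',w'))$, the Kuranishi function $\theta$ as in~\eqref{eq_Kuranishi_trick}, and $\tilde m$ collecting $\tilde J$ and the Jacobian.

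Second, I would identify the canonical transformation generated by $\tilde\Phi$. Using the second-order-improved version of~\eqref{eq_Kuranishi_trick}, namely $\theta(x,y,w)=\tfrac{2}{i}\partial_x\psi(\tfrac{x+y}{2},w)+\O(|x-y|^2)$, one sees $\theta(x',2x-x',w')=\tfrac{2}{i}\partial_x\psi(x,w')+\O(|x-x'|^2)$, so the critical equations $\partial_{x'}\tilde\Phi=\partial_{w'}\tilde\Phi=0$ yield $x=x'$ and $\theta=\tfrac{2}{i}\partial_x\psi(x,w')$. At the base point, the mixed Hessian $\partial^2_{(x,\theta),(x',w')}\tilde\Phi$ has off-anti-diagonal blocks proportional to the identity and to $\partial_w\partial_x\psi(x_0,\bar x_0)$, both non-singular by strict plurisubharmonicity; hence $\tilde\Phi$ is a non-degenerate generating function of a symplectic diffeomorphism $\kappa_{\mathbf{S}}$ whose base map is $(x,w)\mapsto (x,\tfrac{2}{i}\partial_x\psi(x,w))$, sending $(x_0,\bar x_0)$ to $(x_0,\theta_0)$; its cotangent lift manifestly preserves the zero section.

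Finally, ellipticity follows from the principal symbol computation: the leading term of $\tilde m$ at the base equals $\tilde J(x_0,x_0,\theta_0)=1$ by~\eqref{defi_J} times a non-vanishing Jacobian factor. The main obstacle I anticipate is organizing the combined integrations over $x^*,y^*,\theta^*$ and the change of variable $\theta'\to w'$ so that the cancellation between the dimension-increasing lift (from the $y$-independent inputs) and the dimension-reducing $\gamma^*$ is transparent; the explicit reduction above is the heart of the argument, after which Proposition~\ref{prop:fio} applied with $\Psi=0$ delivers the conclusion.
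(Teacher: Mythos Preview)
Your proposal is correct and essentially reproduces the paper's second proof (Appendix~\ref{dsa}): you arrive at exactly the phase $F(x,\theta;\widetilde x,\widetilde w)=2(x-\widetilde x)\cdot(\theta-\theta(\widetilde x,2x-\widetilde x,\widetilde w))$ of~\eqref{dsa.11}, and your analysis of the critical set and mixed Hessian mirrors~\eqref{dsa.13}--\eqref{dsa.16}. The paper's primary proof (Section~\ref{sec:transco}) is organized differently: rather than collapsing everything into a single oscillatory integral, it treats each of the five constituents $\pi^*$, $W^*$, $\tilde J$, $U_{1/2}$, $\gamma^*$ as an analytic FIO in its own right, checks the transversality hypothesis of Proposition~\ref{prop:fio} for each, and then invokes the general composition theory; your direct reduction is more explicit and gives the generating function and base map $(x,w)\mapsto(x,\tfrac{2}{i}\partial_x\psi(x,w))$ by hand, while the compositional proof is more modular but less concrete.
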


This proposition implies that the map $\mathbf{S}$ is a bijection from
the space of classical analytic symbols of order zero defined in a
neighbourhood of $(x_0, \bar{x}_0)$ to the space of classical analytic
symbols of order zero defined in a neighbourhood of
$ \left( x_0, \theta_0\right)$.

Let
\begin{align}
  \label{equ:def-pi}
  \pi : \Vois \left( x_0, x_0, \bar{x}_0 \right) 
  & \longrightarrow \Vois (x_0, \bar{x}_0) \\
  (x, y, w) & \longmapsto (x, w).
\end{align}
Let $a_{ \hbar}(x, w)$ be a classical analytic symbol of order zero,
which we view as a function of $(x, y, w)$ by identifying it with
$ \pi^* a_{ \hbar}(x, y, w) = a_{ \hbar} \circ \pi (x, y, w)$. We use
the maps $W,\gamma,U_{\frac{1}{2}}$ and $\tilde J$ from
Proposition~\ref{prop_lien_op_brg_op_weyl}.  According to this
proposition, we introduce
\begin{equation}
  \label{equ:defA} 
  b^{\w}_{ \hbar} = \gamma^* U_{1/2}
  \tilde{J} W^* \pi^* a_{ \hbar} =: A a_{ \hbar}.
\end{equation}
The operator $A$ acting on symbols
$a_\h\in S^0(\Vois (x_0, \bar{x}_0))$ is the composition of the five
operators ($ \gamma^*$, $U_{1/2}$, $\tilde J$, $ W^*$ and $ \pi^*$).
We shall give two independent proofs that this composition is an
analytic Fourier integral operator: first by proving that all these
operators are good analytic Fourier integral operators and applying
Proposition~\ref{prop:fio}; in the second proof (Appendix~\ref{dsa})
we give an explicit computation with stationary phase arguments in
order to obtain a simple formula for $A$ (Equation~\eqref{dsa.10}).

\subsection{Proof of Proposition~\ref{prop:S}}
\label{sec:transco}
\paragraph{The operator $\pi^*$.}
Recall from~\eqref{equ:def-pi} that $(\pi^*u)(x,y,w)=u(x,w)$. We have
\[
(\pi^*u)(x,y,w) = \frac{1}{(2\pi\h)^{2n}}\intint\!\!\!\!\intint
e^{\frac{i}{\h}[(x-\tilde x)\cdot\theta + (w-\tilde w)\cdot\omega]}
u(\tilde x,\tilde w)\DD x \DD \theta \DD \tilde w \DD \omega\,.
\]
The operator $\pi^*$ is an elliptic analytic Fourier integral operator
and
\[
\phy(x,y,w;\tilde x, \tilde w; \theta, \omega) = (x-\tilde
x)\cdot\theta + (w-\tilde w)\cdot\omega
\]
 is a strongly non-degenerate
phase function in the sense of~\eqref{equ:ND-strong} (\emph{i.e.},
when $(x,y,w)$ is fixed), with critical variety
\[
C_\phy = \{(x,y,w;\tilde x, \tilde w; \theta, \omega); \quad x =
\tilde x, w = \tilde w\}\,.
\]
From this we get the canonical relation $K_{\pi^*}$:
\[
K_{\pi^*} = \{(x,y,w;\theta,0,\omega), (x,w;\theta,\omega)\} =
\{\left( (a; \trsp\DD\pi_a b^*),\; (\pi(a); b^*) \right)\}\,,
\]
where $a=(x,y,w)\in\CM^{3n}$ and
$b^*=(\theta,\omega)\in(\CM^{2n})^*$. It maps the zero section
$\{b^*=0\}\subset T^*\CM^{2n}$ to the zero section
$\{a^*=0\}\subset T^*\CM^{3n}$, and the inverse image of
$T^*_a\CM^{3n}$ is
\[
\{(\pi(a);b^*); \quad b^*\in(\CM^{2n})^*\} = T^*_{\pi(a)}\CM^{2n}\,,
\]
which intersects the zero section $\{b^*=0\}$ transversally.
Therefore, we may apply Proposition~\ref{prop:fio}, and
$\pi^*: \widetilde{H}_{0,b}\to \widetilde H_{0,a}$ is an analytic
Fourier integral operator.

\paragraph{The operator $W^*$.} Recall that $W:\CM^{3n}\to \CM^{3n}$
is a locally defined diffeomorphism and $W^*u(a) = u(W(a))$,
$a\in\CM^{3n}$. We can write $W^*$ as an elliptic analytic Fourier
integral operator by the formula
\[
W^* u (a) = \frac{1}{(2\pi\h)^{3n}}\intint e^{\frac{i}{\h}(W(a) -
  c)\cdot c^*} u(c) \DD c \DD c^*\,.
\]
The phase $(W(a)-c)\cdot c^*$ is non-degenerate as a function of
$(c,c^*)$ with critical manifold $\{(a,c,c^*); W(a)=c\}$. The
canonical relation is the graph of the lifted symplectic
transformation, \emph{i.e.}
\[
K_{W^*} = \{\left( (a; \trsp W'(a) c^*),\; (W(a); c^*) \right)\}\,.
\]
It maps the zero section to the zero section, and
$K_{W^*}^{-1}(T^*_a\CM^{3n}) = T^*_{W(a)}\CM^{3n}$, which is
transversal to the zero section. Thus we may apply
Proposition~\ref{prop:fio}.

\paragraph{The operator $\tilde J$} is a multiplication operator,
$K_{\tilde J}=\textup{Id}$.

\paragraph{The operator $U_{\frac{1}{2}}$.} We have seen
in~\eqref{equ:Ut} (and below that) that $U_{\frac{1}{2}}$ is an
analytic Fourier integral operator with associated canonical
transformation $\kappa_{\frac{1}{2}}$ given by
\[
\kappa_{\frac{1}{2}}(a,a^*) = (a + h(a^*), \; a^*),
\]
where $a\in\CM^{3n}$ and $h$ is the block-matrix
$h= \frac{1}{2}\begin{pmatrix}
  0 & 0 & 1\\
  0 & 0 & -1\\
  1 & -1 & 0
\end{pmatrix}$.
It follows that, when $c\in\CM^{3n}$ is fixed,
$\kappa^{-1}_\frac{1}{2}(T^*_c\CM^{3n}) = \{(c-h(a^*), a^*), \; a^*
\in\CM^{3n})\}$
is parametrized by $a^*$ and transversal to the zero section, which
permits the application of Proposition~\ref{prop:fio}.

\paragraph{The operator $\gamma^*$.} Recall
from~\eqref{equ:defi-gamma} that
$\gamma^*u(x,\theta) = u(x,x,\theta)$, so
\[
\gamma^*u(b) = \frac{1}{(2\pi\h)^{3n}} \intint e^{\frac{i}{\h}
  [(\gamma(b)-c)\cdot c^*)]} u(c) \DD c \DD c^*\,,
\]
with $c=(x,y,\theta)\in\CM^{3n}, c^*\in(\CM^{3n})^*$. Again, we see
that $\gamma^*$ is an elliptic, analytic Fourier integral operator,
with non-degenerate phase $\phy(b,c,c^*)$, and since $\gamma^*$
is a pull-back, we obtain, as for $\pi^*$ and $W^*$,
\[
K_{\gamma^*} = \{\left( (b, \trsp\DD\gamma_b c^*),\; (\gamma(b), c^*)
\right) ; \quad b\in \CM^{2n}, c^*\in (\CM^{3n})^*\}\,.
\]
Again, it maps the zero section $\{c^*=0\}\subset T^*\CM^{3n}$ to the
zero section in $T^*\CM^{2n}$, and
$K_{\gamma^*}^{-1}(T^*_b\CM^{2n})= T_{\gamma(b)}\CM^{3n}$, which is
transversal to the zero section $\{c^*=0\}$. Hence
Proposition~\ref{prop:fio} can be applied.

To conclude, we have shown that all the compositions involved in the
operator $A$ are transverse and with non-vanishing symbol, making it
an analytic Fourier integral operator
$\widetilde H_{0,(x_0,\bar x_0)} \to \widetilde H_{0,(x_0,\theta_0)}$,
which is elliptic and whose associated canonical transformation
$T^*\CM^{2n}\to T^*\CM^{2n}$ send the zero section to itself, which
proves Proposition~\ref{prop:S}. Choosing a Fourier integral operator
$B$ associated with the inverse canonical transformation, and applying
analytic ellipticity to the pseudo-differential operators $AB$ and
$BA$, we construct in the usual way a local inverse to $A$, sending
$\widetilde H_{0,(x_0,\theta_0)}$ to
$\widetilde H_{0,(x_0,\bar x_0)}$, thus proving Theorem~\ref{theom}.

\subsection{Local Toeplitz operators}

An as application of the results in this section, one can see that the
calculus of local analytic Toeplitz operators is equivalent to the one
of analytic $\Brg$ operators (and hence to the calculus of complex
pseudo-differential operators).

Choose $b_\hbar=1$ in Theorem~\ref{theom} (case~\ref{item:2}), and let
$a^0_\hbar$ be the corresponding symbol $a_\hbar$ given there. We then
get a local candidate $\Pi =\mathrm{Op}^{\mathrm{Brg}}(a_\hbar^0)$ for
the Bergman projection in $H_{\Phi ,x_0}$. Let
$q_\h=q(x,\overline{x};\hbar )$ be a classical analytic symbol of
order $0$, defined near $(x_0,\overline{x}_0)$. The corresponding
local Toeplitz operator is then by definition
\begin{equation}\label{lto.1}
  \Top(q_\h)=\Pi \circ q_\h,\hbox{ i.e.\ }\Top(q_\h)u=\Pi
  (q_\h u),\ u\in L^2_{\Phi ,x_0}.
\end{equation}

In the spirit of Theorem 3.1, we obtain the following result.
\begin{prop} Given such a $q_\h$, there exists a unique classical
  analytic symbol $a_\hbar$ of order 0, defined near
  $(x_0,\overline{x}_0)$, such that
  \[
    \mathrm{Top\,}(q_\h)\hnegl \Opbrg(a_\hbar ).
  \]
  Moreover, the map $q_\h\mapsto a_\h$ is a bijection on the set of
  (germs of) formal classical analytic symbol, defined near
  $(x_0,\overline{x_0})$.
\end{prop}

\begin{proof}
  We have
\[\begin{split}
    \Top(q_\h)u(x)&=\frac{1}{(2\pi \hbar )^n}\int e^{\frac{2}{\hbar
      }\psi (x,\overline{y})}a^0(x,\overline{y};\hbar
    )q(y,\overline{y};\hbar )u(y)e^{-\frac{2}{\hbar }\Phi (y)}L(dy)\,,
\end{split}\]
in other words, $\Top(q_\h) = \Opbrg(\tilde a_\h)$, with
\[
\tilde a (x,y,w;\h) := a^0(x,w;\h) q(y,w;\h).
\]
By Proposition~\ref{prop_lien_op_brg_op_weyl}, we obtain a classical
analytic symbol $b_\h$ defined near $(x_0,\theta_0)$ such that
$\Top(q_\h) \hnegl \Op^{\w}_\h(b_{ \hbar})$, and it is explicitly
given by the formula
\[
b_{ \hbar} = \gamma^* U_{1/2} \tilde{J} W^*
  a^0_{ \hbar} q_\h\,.
\]
Since $a^0_\h$ was obtained from the constant symbol $1$ by an
elliptic Fourier integral operator (Proposition~\ref{prop:S}), it is
elliptic near $(x_0,\overline{x_0})$. We may now argue as in
Proposition~\ref{prop:S}, except that the operator $\pi$ used there
must be replaced by $\pi_y := (x,y,w) \mapsto (y,w)$. Clearly, the
permutation $y\leftrightarrow x$ does not change the fact that we have
an elliptic Fourier integral operator, associated to a canonical
transformation which maps the zero section to the zero section.

Therefore, it follows from Section~\ref{sec:transco} that the
composition
$\gamma^* \circ U_{1/2} \circ \tilde{J} \circ W^* \circ a^0_{ \hbar}
\circ \pi_y^*$ is an elliptic analytic Fourier integral operator, and
hence the local map $ q_\h \mapsto b_\h$ is formally invertible in the
analytic sense. To conclude, it suffices to compose this bijection
with the one of Theorem~\ref{theom}, which gives $a_\h$ such that
\[
\Opbrg(a_\h) \hnegl \Op^{\w}_\h(b_{ \hbar}) \hnegl \Top(q_\h).
\]
\end{proof}

\section{The approximate Bergman projection}
\label{sec:appr-bergm-proj}

\subsection{Functional analysis of \texorpdfstring{$\lphi$}{Lphi}
  spaces}

\begin{defi}
  \label{defi:lphi}
  Let $ \Omega$ be an open subset of $ \CM^n$. Let
  $ \Phi \in \mathscr{C}^0( \Omega; \mathbb{R})$. For any $\h>0$, we
  define the following spaces.
  \begin{enumerate}
  \item $L^2_\Phi(\Omega)$ is the $L^2$-space with weight
    $e^{-2\Phi/\h}$ on $\Omega$; it is a Hilbert space with the norm
    \[
    \norm{u}_{L^2_\Phi(\Omega)} := \|u e^{- \Phi/ \hbar}
    \|_{L^2(\Omega)} = \int \abs{u(x)}^2 e^{-2\Phi(x)/\h}L(\DD x).
    \]

  \item $L_{ \Phi,\loc}^{2}(\Omega)$ is the Fréchet space
    $L^2_\loc(\Omega)$ equipped with the set of seminorms
    $\norm{u}_{L^2_\Phi(\tilde\Omega)}$, where
    $\tilde\Omega\Subset\Omega$ is an arbitrary open set with compact
    closure in $\Omega$.

  \item $L^2_{\Phi,\textup{comp}}(\Omega)$ is the space of compactly
    supported functions in $L^2_\Phi(\Omega)$.
  \end{enumerate}
\end{defi}
Since $\Phi$ is continuous, for any fixed $\h$ we have the set
equality
$L^2_{\Phi,\textup{comp}}(\Omega)= L^2_{\textup{comp}}(\Omega)$.
Similarly to the space $\Cinf_0(\Omega)$ in distribution theory,
$L^2_{\Phi,\textup{comp}}$ is a projective limit of Fréchet spaces,
and following the tradition we will only use the convergence of
sequences: for a fixed $\h$, the sequence $(u_j)_{j\in\NM}$ converges
to $u$ in $L^2_{\Phi,\textup{comp}}$ if the support of all $u_j$ is
contained in a fixed subset $\tilde\Omega\Subset \Omega$ and
$\norm{u_j - u}_{\lphi(\tilde\Omega)}\to 0$. Thus, the injection
$L^2_{\Phi,\textup{comp}}\subset L^2_{ \Phi,\loc}$ is sequentially
continuous; and moreover we have a well-defined pairing on
$L_{ \Phi,\loc}^{2} \times L^2_{\Phi,\textup{comp}}$ given by
\[
\pscal{u}{v}_{\lphi} = \int u(x) \overline{v(x)} e^{-2\Phi(x)/\h}
L(\DD x),
\]
which is continuous in the first factor and sequentially continuous in
the second one.

From the Fréchet topology of $L_{ \Phi,\loc}^{2}(\Omega)$ we obtain
that, for a fixed $\h>0$, a linear operator
$A:L_{ \Phi_1,\loc}^{2}(\Omega) \to L_{ \Phi_2,\loc}^{2}(\Omega)$ is
continuous if and only if for every $\Omega_2\Subset\Omega$, there
exist $\Omega_1\Subset\Omega$ and a constant $C>0$ such that
\begin{equation}
  \norm{A u}_{L^2_{\Phi_2}(\Omega_2)} \leq C \norm{u}_{L^2_{\Phi_1}(\Omega_1)}.
  \label{equ:cont-lphi}
\end{equation}
Notice that \eqref{equ:cont-lphi} implies that if $u$ vanishes on
$\Omega_1$, then $Au$ vanishes on $\Omega_2$; in other words, if we
regard the support of the distribution kernel $K_A$ of $A$ as a
relation from $\Omega$ to itself, we have
\begin{equation}
  (\spt K_A)^{-1}(\Omega_2):= \{y\in\Omega; \quad \exists (x,y)\in
  (\spt K_A)\cap \Omega_2\times \Omega\} \subset \Omega_1.
  \label{equ:proper2}
\end{equation}
On the other hand, an operator
$A:L_{ \Phi_1,\textup{comp}}^{2}(\Omega) \to L_{
  \Phi_2,\textup{comp}}^{2}(\Omega)$
is continuous if and only if for every $\Omega_1\Subset\Omega$, there
exists $\Omega_2\Subset\Omega$ such that
\begin{equation}
  \spt u \subset\Omega_1 \implies \spt Au \subset \Omega_2
  \label{equ:proper1}
\end{equation}
and $A$ is continuous as an operator from $\lphi(\Omega_1)$ to
$\lphi(\Omega)$. Then $A$ sends convergent sequences in
$ L_{ \Phi_1,\textup{comp}}^{2}(\Omega)$ to convergent sequences in
$ L_{ \Phi_2,\textup{comp}}^{2}(\Omega)$.

An operator that satisfies both~\eqref{equ:proper2}
and~\eqref{equ:proper1} is called \emph{properly supported}.  Notice
that the injections
$\Cinf_0(\Omega)\subset L^2_{\Phi,\textup{comp}}(\Omega)$ and
$L^2_{\Phi,\loc}(\Omega)\subset \mathscr{D}'(\Omega)$ are
(sequentially) continuous.  Hence any continuous operator
$A:L_{ \Phi_1,\textup{comp}}^{2}(\Omega_1) \to L_{
  \Phi_2,\loc}^{2}(\Omega_2)$
admits a Schwartz kernel
$K_A\in\mathscr{D}'(\Omega_2\times \Omega_1)$.

If $A$ has kernel $K_A$, its formal adjoint, denoted by $A^*$, is the
operator defined by the kernel $(x,y)\mapsto \overline{K_A(y,x)}$.  We
see that taking formal adjoint swaps properness
conditions~\eqref{equ:proper2} and~\eqref{equ:proper1}. Hence if
$A:L_{ \Phi_1,\loc}^{2}(\Omega_1) \to L_{ \Phi_2,\loc}^{2}(\Omega_2)$
is continuous, then
$A^*:L_{ \Phi_1,\textup{comp}}^{2}(\Omega_2) \to L_{
  \Phi_2,\textup{comp}}^{2}(\Omega_1)$ is continuous, and conversely.

We now introduce uniform versions of these remarks, as $\h\to 0$.
\begin{defi}\label{defi:unif-cont}
  A linear operator
  $A=A_\h : L_{ \Phi_1,\loc}^{2}(\Omega_1) \to L_{
    \Phi_2,\loc}^{2}(\Omega_2)$
  is \textbf{uniformly continuous}, and we write:
  \[
  A = \mathcal{O}(1): L_{ \Phi_1,\loc}^{2}(\Omega_1) \to L_{
    \Phi_2,\loc}^{2}(\Omega_2)
  \]
  if there exists $\h_0>0$ such that, for every
  $\tilde\Omega_2\Subset\Omega_2$, there exist
  $\tilde\Omega_1\Subset\Omega_1$, and a constant $C>0$, both
  independent of $\h$, such that, for all
  $u\in L^2_{\Phi_1,\loc}(\Omega_1)$,
  \begin{equation}
    \forall \h\in\;]0,\h_0], \qquad 
    \norm{A u}_{L^2_{\Phi_2}(\tilde \Omega_2)} \leq 
    C \norm{u}_{L^2_{\Phi_1}(\tilde \Omega_1)}.
    \label{equ:unif-cont-lphi}
  \end{equation}
\end{defi}
\begin{defi}
  A linear operator
  $A=A_\h : L_{ \Phi_1,\loc}^{2}(\Omega_1) \to L_{
    \Phi_2,\loc}^{2}(\Omega_2)$
  is \textbf{uniformly properly supported} if the projections from
  $\spt K_A$ to the factors $\Omega_1$ and $\Omega_2$ are uniformly
  proper, in the following sense: there exists $\h_0>0$ such that the
  following properties hold:
  \begin{enumerate}
  \item For all $\tilde\Omega_2\Subset\Omega_2$, there exists
    $\tilde \Omega_1\Subset \Omega_1$, independent of $\h$, such that
    \begin{equation}
      \forall \h\in\;]0,\h_0], \qquad 
      (\spt K_A)^{-1}(\tilde \Omega_2) \subset \tilde\Omega_1;
      \label{equ:unif-proper2}    
    \end{equation}

  \item For all $\tilde\Omega_1\Subset\Omega_1$, there exists
    $\tilde \Omega_2\Subset \Omega_2$, independent of $\h$, such that
    \begin{equation}
      \forall \h\in\;]0,\h_0], \qquad
      (\spt K_A) (\tilde\Omega_1) \subset \tilde\Omega_2.
      \label{equ:unif-proper1}
    \end{equation}
  \end{enumerate}
\end{defi}
\begin{prop}
  \label{prop:chi1-chi2}
  If
  $A:L_{ \Phi_1,\loc}^{2}(\Omega_1) \to L_{
    \Phi_2,\loc}^{2}(\Omega_2)$
  satisfies~\eqref{equ:unif-proper2}, then $A$ is uniformly continuous
  if and only if for all $\chi_j\in\Cinf_0(\Omega_j)$, $j=1,2$, the
  operator
  $\chi_2 A \chi_1: L^2_{\Phi_1}(\Omega_1)\to L^2_{\Phi_2}(\Omega_2)$
  is uniformly bounded, as $\h\to 0$.
\end{prop}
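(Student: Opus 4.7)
The plan is to prove both implications by using the support condition \eqref{equ:unif-proper2} to decouple the local-to-local continuity of $A$ from the global $L^2$ boundedness of its cutoffs $\chi_2 A \chi_1$.

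For the direct implication, suppose $A$ is uniformly continuous. Given $\chi_j \in \Cinf_0(\Omega_j)$, I would choose a relatively compact neighbourhood $\tilde\Omega_2 \Subset \Omega_2$ of $\spt \chi_2$, apply Definition~\ref{defi:unif-cont} to obtain $\tilde\Omega_1\Subset\Omega_1$ and a constant $C>0$, both uniform in $\h$, such that $\norm{A v}_{L^2_{\Phi_2}(\tilde\Omega_2)} \leq C \norm{v}_{L^2_{\Phi_1}(\tilde\Omega_1)}$ for every $v \in L^2_{\Phi_1,\loc}(\Omega_1)$. Specializing to $v = \chi_1 u$ and using that $\chi_2 A\chi_1 u$ is supported in $\spt \chi_2 \subset \tilde\Omega_2$, this gives $\norm{\chi_2 A \chi_1 u}_{L^2_{\Phi_2}(\Omega_2)} \leq \norm{\chi_2}_\infty \norm{A\chi_1 u}_{L^2_{\Phi_2}(\tilde\Omega_2)} \leq C \norm{\chi_1}_\infty \norm{\chi_2}_\infty \norm{u}_{L^2_{\Phi_1}(\Omega_1)}$, uniformly in $\h$.

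For the converse, given $\tilde\Omega_2 \Subset \Omega_2$ I would use \eqref{equ:unif-proper2} to extract $\tilde\Omega_1 \Subset \Omega_1$ independent of $\h$, slightly enlarge each $\tilde\Omega_j$ to some $\tilde\Omega_j'\Subset\Omega_j$, and pick cutoffs $\chi_j\in\Cinf_0(\tilde\Omega_j')$ with $\chi_j\equiv 1$ on a neighbourhood of $\tilde\Omega_j$. Then I would decompose $Au = A(\chi_1 u) + A((1-\chi_1) u)$. On $\tilde\Omega_2$ the first term coincides with $\chi_2 A \chi_1 u$ (because $\chi_2 \equiv 1$ there), and by hypothesis $\norm{\chi_2 A \chi_1 u}_{L^2_{\Phi_2}(\Omega_2)} \leq C \norm{\chi_1 u}_{L^2_{\Phi_1}(\Omega_1)} \leq C' \norm{u}_{L^2_{\Phi_1}(\tilde\Omega_1')}$, uniformly in $\h$. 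It remains to show that $A((1-\chi_1)u)$ vanishes on $\tilde\Omega_2$; combining then yields the uniform continuity estimate with $\tilde\Omega_1'$ playing the role of $\tilde\Omega_1$ in Definition~\ref{defi:unif-cont}.

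The main obstacle is precisely this vanishing of $A((1-\chi_1)u)$ on $\tilde\Omega_2$, because $(1-\chi_1)u$ is not in general compactly supported. I would resolve it at the level of the distribution kernel $K_A$: restricted to $\tilde\Omega_2 \times \Omega_1$ one has $(\spt K_A) \cap (\tilde\Omega_2 \times \Omega_1) \subset \tilde\Omega_2 \times \tilde\Omega_1$ by \eqref{equ:unif-proper2}, while $(1-\chi_1) u$ vanishes identically on a neighbourhood of $\tilde\Omega_1$; pairing $K_A$ against $(1-\chi_1)u$ and any test function supported in $\tilde\Omega_2$ therefore yields zero. All other steps are routine bookkeeping with cutoffs and with constants independent of $\h$.
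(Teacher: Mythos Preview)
Your proof is correct and follows essentially the same route as the paper's. The direct implication is identical. For the converse, the paper writes $\chi_2 A u = \chi_2 A \chi_1^2 u$ (choosing $\chi_1\equiv 1$ on a neighbourhood of $(\spt K_A)^{-1}(\spt\chi_2)$) and then applies the hypothesis to $\chi_2 A \chi_1$ acting on $\chi_1 u$; your decomposition $Au = A(\chi_1 u) + A((1-\chi_1)u)$ and the kernel-support argument for the vanishing of the second term on $\tilde\Omega_2$ is the same idea, and you are in fact slightly more explicit than the paper about why this step is legitimate. One cosmetic point: your inequality $\norm{\chi_2 A \chi_1 u}\leq C\norm{\chi_1 u}$ does not follow literally from the boundedness of $\chi_2 A \chi_1$ (which gives $\leq C\norm{u}$); it does follow once you replace $\chi_1$ by a cutoff $\tilde\chi_1\equiv 1$ on $\spt\chi_1$ and write $\chi_2 A \chi_1 u = (\chi_2 A \tilde\chi_1)(\chi_1 u)$, which is exactly the trick the paper uses with its $\chi_1^2$.
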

\begin{proof}
  If $A$ is uniformly continuous, and $\chi_j\in\Cinf_0(\Omega_j)$,
  $j=1,2$, are given, let $\tilde\Omega_2\Subset\Omega_2$ contain the
  support of $\chi_2$. Then there exists
  $\tilde\Omega_1\Subset\Omega_1$ such that,
  \begin{align}
    \norm{\chi_2 A \chi_1 u}_{L^2_{\Phi_2}(\Omega_2)} 
    & 
      \leq
      \norm{\chi_2}_{L^\infty}\norm{A \chi_1 u}_{L^2_{\Phi_2}(\tilde
      \Omega_2)} \leq C\norm{\chi_2}_{L^\infty}\norm{\chi_1
      u}_{L^2_{\Phi_1}(\tilde \Omega_1)} \\
    &
      \leq C\norm{\chi_2}_{L^\infty}\norm{\chi_1}_{L^\infty}\norm{
      u}_{L^2_{\Phi_1}(\tilde \Omega_1)} \\
    &
      \leq  C\norm{\chi_2}_{L^\infty}\norm{\chi_1}_{L^\infty}\norm{
      u}_{L^2_{\Phi_1}(\Omega_1)}.
  \end{align}
  Conversely, if $\tilde\Omega_2$ is given, let
  $\chi_2\in\Cinf(\Omega_2)$ be such that $\chi_2 = 1$ on a
  neighbourhood of $\overline{\tilde\Omega_2}$: we have
  \[
  \norm{A u}_{L^2_{\Phi_2}(\tilde \Omega_2)} = \norm{\chi_2 A
    u}_{L^2_{\Phi_2}(\tilde \Omega_2)} \leq \norm{\chi_2 A
    u}_{L^2_{\Phi_2}(\Omega_2)}.
  \]
  Now let $\chi_1\in\Cinf(\Omega_1)$ be such that $\chi_1 = 1$ on
  a neighbourhood of the compact set $(\spt K_A)^{-1}(\spt \chi_2)$
  that is independent of $\h$ (this is possible thanks
  to~\eqref{equ:unif-proper2}). Then
  \[
  \norm{\chi_2 A u}_{L^2_{\Phi_2}(\Omega_2)} = \norm{\chi_2 A \chi_1^2
    u}_{L^2_{\Phi_2}( \Omega_2)} \leq C \norm{\chi_1 u}_{L^2_{\Phi_1}(
    \Omega_1)}
  \]
  Finally we may choose $\tilde\Omega_1\Subset\Omega_1$ containing the
  support of $\chi_1$, and get
  $\norm{\chi_1 u}_{L^2_{\Phi_1}( \Omega_1)} \leq C_1
  \norm{u}_{L^2_{\Phi_1}( \tilde\Omega_1)} $,
  proving that $A$ is uniformly continuous.
\end{proof}
\begin{coro}
  If
  $A:L_{ \Phi_1,\loc}^{2}(\Omega_1) \to L_{
    \Phi_2,\loc}^{2}(\Omega_2)$
  is uniformly properly supported, then $A$ is uniformly continuous if
  and only if $A^*$ is uniformly continuous.
\end{coro}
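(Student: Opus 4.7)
The plan is to combine Proposition~\ref{prop:chi1-chi2} with the observation that taking formal adjoints preserves uniform proper support, so that the proposition can be applied to both $A$ and $A^*$.

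First, I would show that if $A$ is uniformly properly supported, then so is $A^*$. Since the distribution kernel of $A^*$ is $(x,y)\mapsto \overline{K_A(y,x)}$, its support is the image of $\spt K_A$ under the coordinate flip $(x,y)\mapsto(y,x)$. Thus conditions~\eqref{equ:unif-proper2} and~\eqref{equ:unif-proper1} are simply interchanged for $A^*$ (with $\Omega_1$ and $\Omega_2$ swapped accordingly), and both still hold. In particular $A^*$ satisfies the analog of~\eqref{equ:unif-proper2}, which ensures that Proposition~\ref{prop:chi1-chi2} is applicable to $A^*$ as well.

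Next, assuming $A$ is uniformly continuous, Proposition~\ref{prop:chi1-chi2} gives that $\chi_2 A \chi_1 : L^2_{\Phi_1}(\Omega_1)\to L^2_{\Phi_2}(\Omega_2)$ is uniformly bounded in $\h$ for every pair of cutoffs $\chi_j\in \Cinf_0(\Omega_j)$. A direct calculation yields $(\chi_2 A \chi_1)^* = \bar\chi_1 A^* \bar\chi_2$. By the duality principle for bounded operators between Hilbert spaces, the Hilbert-space adjoint of $\chi_2 A \chi_1$ has the same operator norm as $\chi_2 A \chi_1$; and this Hilbert adjoint differs from the formal adjoint $\bar\chi_1 A^* \bar\chi_2$ only by conjugation with the weights $e^{\pm\Phi_j/\h}$, which are isometries between the corresponding weighted $L^2$ spaces. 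Hence $\bar\chi_1 A^* \bar\chi_2$ is uniformly bounded (in $\h$) between the appropriate weighted spaces, for all cutoffs. Applying the converse direction of Proposition~\ref{prop:chi1-chi2} to $A^*$, whose applicability was ensured in the first step, then yields the uniform continuity of $A^*$. The reverse implication follows by the same argument applied to $A^*$, using $(A^*)^{*}=A$.

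The principal obstacle I anticipate is the careful bookkeeping of weights when passing between the Hilbert-space adjoint of an operator between weighted $L^2_{\Phi_j}$ spaces and the formal adjoint $A^*$ (defined via the unweighted $L^2$ pairing); once this identification is seen to preserve operator norms up to isometric conjugations by weights, the result reduces to two applications of Proposition~\ref{prop:chi1-chi2}.
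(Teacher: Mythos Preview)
Your proof is correct and follows essentially the same route as the paper, which consists of the single observation that if $\chi_2 A \chi_1$ is uniformly bounded then so is its adjoint $\bar\chi_1 A^* \bar\chi_2$, combined with both directions of Proposition~\ref{prop:chi1-chi2}. Your added verification that $A^*$ inherits uniform proper support (so that Proposition~\ref{prop:chi1-chi2} applies to it) and your care in distinguishing the Hilbert-space adjoint from the formal adjoint are details the paper leaves implicit.
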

\begin{proof}
  If
  $\chi_2 A \chi_1: L^2_{\Phi_1}(\Omega_1)\to L^2_{\Phi_2}(\Omega_2)$
  is uniformly bounded, then its adjoint
  $\overline{\chi_1}A^*\overline{\chi_2}$ is uniformly bounded.
\end{proof}

\begin{prop}
  \label{prop:phi1}
  Let $A:L_{ \Phi,\loc}^{2}(\Omega) \to L_{ \Phi,\loc}^{2}(\Omega)$ be
  uniformly continuous.  Let $\Phi_1\in \mathscr{C}(\Omega;\RM)$ be
  such that $\Phi_1<\Phi$. Then there exists
  $\Phi_2\in \mathscr{C}(\Omega;\RM)$, $\Phi_2<\Phi$, such that
  \[
  A = \mathcal{O}(1) : L_{ \Phi_1,\loc}^{2}(\Omega) \to L_{
    \Phi_2,\loc}^{2}(\Omega)
  \]
\end{prop}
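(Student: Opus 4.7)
The plan is to exploit the strict inequality $\Phi_1<\Phi$ to gain an exponential factor $e^{-\delta/\h}$ which compensates for the cost $e^{M/\h}$ of passing from the weight $\Phi$ to a slightly smaller weight $\Phi_2$ on the output side. Concretely, I would start by fixing $\tilde\Omega_2\Subset\Omega$ and applying the uniform continuity of $A$ in weight $\Phi$ to obtain $\tilde\Omega_1\Subset\Omega$ and $C>0$, both independent of $\h$, with $\norm{Au}_{L^2_\Phi(\tilde\Omega_2)}\leq C\norm{u}_{L^2_\Phi(\tilde\Omega_1)}$. Since $\overline{\tilde\Omega_1}$ is compact in $\Omega$ and $\Phi-\Phi_1$ is continuous and strictly positive, I set $\delta:=\min_{\overline{\tilde\Omega_1}}(\Phi-\Phi_1)>0$; factoring $e^{-2\Phi/\h}=e^{-2\Phi_1/\h}\cdot e^{-2(\Phi-\Phi_1)/\h}$ and bounding the second factor by $e^{-2\delta/\h}$ on $\tilde\Omega_1$ yields $\norm{u}_{L^2_\Phi(\tilde\Omega_1)}\leq e^{-\delta/\h}\norm{u}_{L^2_{\Phi_1}(\tilde\Omega_1)}$, hence $\norm{Au}_{L^2_\Phi(\tilde\Omega_2)}\leq Ce^{-\delta/\h}\norm{u}_{L^2_{\Phi_1}(\tilde\Omega_1)}$.

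Next, for any continuous $\Phi_2<\Phi$, the obvious pointwise estimate gives $\norm{Au}_{L^2_{\Phi_2}(\tilde\Omega_2)}\leq e^{M/\h}\norm{Au}_{L^2_\Phi(\tilde\Omega_2)}$ with $M:=\sup_{\tilde\Omega_2}(\Phi-\Phi_2)$. Chaining the two estimates, the desired $\h$-uniform bound $\norm{Au}_{L^2_{\Phi_2}(\tilde\Omega_2)}\leq C'\norm{u}_{L^2_{\Phi_1}(\tilde\Omega_1)}$ holds as soon as $M\leq\delta$. It therefore remains to produce one continuous function $\Phi_2<\Phi$ on $\Omega$ such that $\sup_{\tilde\Omega_2}(\Phi-\Phi_2)$ is dominated by the gap $\delta(\tilde\Omega_2)$ inherited from the uniform continuity of $A$, for every compact $\tilde\Omega_2\Subset\Omega$.

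For the construction of such a $\Phi_2$ I would use an exhaustion $K_1\Subset K_2\Subset\cdots\Subset\Omega$ with $\bigcup_n K_n=\Omega$: applying the uniform continuity to $\tilde\Omega_2=K_n$ yields a compact $\tilde K_n\Subset\Omega$ (which, after enlarging $K_{n+1}$ if necessary, I may assume lies inside $K_{n+1}$) and a positive number $\delta_n:=\min_{\overline{\tilde K_n}}(\Phi-\Phi_1)>0$. I then set $\Phi_2:=\Phi-\rho$, where $\rho$ is a continuous positive function on $\Omega$ with $\rho<\Phi-\Phi_1$ whose values on the successive shells $K_{n+1}\setminus K_n$ are controlled by the corresponding $\delta_n$, the control being realized by means of a partition of unity subordinate to the exhaustion. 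I expect the main difficulty to lie precisely in this shell-by-shell bookkeeping: $\rho$ must remain strictly positive pointwise while being small enough near $\partial\Omega$ in a precisely calibrated way so that the constraint $\sup_{\tilde\Omega_2}\rho\leq\delta(\tilde\Omega_2)$ is met for every compact $\tilde\Omega_2$. Once such a $\rho$ is in hand, chaining the two estimates above delivers the uniform continuity of $A:L^2_{\Phi_1,\loc}(\Omega)\to L^2_{\Phi_2,\loc}(\Omega)$.
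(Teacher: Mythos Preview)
Your overall strategy---trade the input gap $\Phi-\Phi_1$ for an output gap $\Phi-\Phi_2$---is exactly the paper's, but the constraint you isolate, namely
\[
\sup_{\tilde\Omega_2}(\Phi-\Phi_2)\ \le\ \delta(\tilde\Omega_2)=\min_{\overline{\tilde\Omega_1(\tilde\Omega_2)}}(\Phi-\Phi_1)\quad\text{for every }\tilde\Omega_2\Subset\Omega,
\]
is in general \emph{impossible} to satisfy with a fixed positive $\rho=\Phi-\Phi_2$. Indeed, if $\Phi-\Phi_1$ tends to $0$ near $\partial\Omega$ (which is allowed), then along your exhaustion $\delta_n\to 0$; since $K_1\subset K_n$ you would need $\sup_{K_1}\rho\le\delta_n$ for all $n$, forcing $\rho\equiv 0$ on $K_1$. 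So the ``shell-by-shell bookkeeping'' you flag as the main difficulty is not just delicate---it is an obstruction, because you are routing the estimate through the global weight $\Phi$ on all of $\tilde\Omega_2$.

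The paper sidesteps this by localizing \emph{before} chaining the inequalities, and by using a trivial but crucial observation you do not exploit: shifting the weight by a constant preserves the uniform-continuity inequality, i.e.\ $\norm{Au}_{L^2_\Phi(U)}\le C\norm{u}_{L^2_\Phi(V)}$ implies $\norm{Au}_{L^2_{\Phi-c}(U)}\le C\norm{u}_{L^2_{\Phi-c}(V)}$ for any constant $c$. Concretely, take a locally finite open cover $\{U_j\}$ of $\Omega$, let $V_j\Subset\Omega$ be the input set for $U_j$, set $\epsilon_j:=\inf_{V_j}(\Phi-\Phi_1)>0$, and build $\Phi_2=\Phi-\sum_m\tilde\epsilon_m\chi_m$ with a subordinate partition of unity and $\tilde\epsilon_m:=\min\{\epsilon_j:U_j\cap U_m\neq\varnothing\}$. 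Then on each $U_j$ one has $\Phi_2\ge\Phi-\epsilon_j$, so
\[
\norm{Au}_{L^2_{\Phi_2}(U_j)}\le\norm{Au}_{L^2_{\Phi-\epsilon_j}(U_j)}\le C_j\norm{u}_{L^2_{\Phi-\epsilon_j}(V_j)}\le C_j\norm{u}_{L^2_{\Phi_1}(V_j)},
\]
the middle step being the constant-shift trick and the last using $\Phi-\epsilon_j\ge\Phi_1$ on $V_j$. The point is that only the \emph{local} gap $\epsilon_j$ enters on each patch, so $\Phi_2$ can be genuinely below $\Phi$ everywhere without any global smallness constraint. Your exhaustion argument can be repaired along exactly these lines: estimate on shells (or on a locally finite cover) rather than on the full $K_n$, and use the constant-shift observation instead of passing through $L^2_\Phi$ globally.
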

\begin{proof}
  Let $U_m\Subset \Omega$, $m\in\NM$, be open subsets that form a
  locally finite covering of $\Omega$. For each $U_j$, by uniform
  continuity, there exists $V_j\Subset \Omega$ such that
  \begin{equation}
    \label{eq:3}
    \norm{A u}_{L^2_{\Phi}(U_j)} \leq C_j \norm{u}_{L^2_{\Phi}(V_j)},
  \end{equation}
  for some uniform constant $C_j$.  Let
  $\epsilon_j:= \inf_{V_j} (\Phi-\Phi_1)$ and
  \begin{equation}
    \label{equ:epsilon}
    \tilde\epsilon_m:=\min_{j; \,U_j\cap U_m \neq \varnothing} \epsilon_j.    
  \end{equation}
  We define $\Phi_2=\Phi-\sum_m \tilde\epsilon_m \chi_m$, where
  $\chi_m\in\Cinf_0(U_m; [0,1])$ form a partition of unity associated
  with the covering $(U_m)$. On each $U_j$ we have
  $\Phi_2\geq \Phi- \epsilon_j$, and on each $V_j$ we have
  $\Phi- \epsilon_j \geq \Phi - (\Phi-\Phi_1) = \Phi_1$.  Hence
  \begin{equation}
    \label{eq:3}
    \norm{A u}_{L^2_{\Phi_2}(U_j)} \leq \norm{A u}_{L^2_{\Phi-\epsilon_j}(U_j)}  
    \leq  C_j
    \norm{u}_{L^2_{\Phi-\epsilon_j}(V_j)} \leq  
    C_j \norm{u}_{L^2_{\Phi_1}(V_j)} .
  \end{equation}

\end{proof}

Notice that if $\Phi'<\Phi$, then we have a uniformly continuous
injection $L^2_{\Phi'}\subset L^2_\Phi$, which is exponentially small,
as $\h\to 0$, on every compact set. Thus, we introduce the next
definition.
\begin{defi}\label{defi:negligible}
  Let $A:L_{ \Phi,\loc}^{2}(\Omega) \to L_{ \Phi,\loc}^{2}(\Omega)$ be
  uniformly properly supported. We say that $A$ is \textbf{negligible}
  and write $A\equiv_\Phi 0$ if there exists a continuous function $\Phi_2$
  on $\Omega$ such that $\Phi_2<\Phi$ and
  \[
  A = \mathcal{O}(1) : L_{ \Phi,\loc}^{2}(\Omega) \to L_{
    \Phi_2,\loc}^{2}(\Omega)
  \]
\end{defi}
If $A$ is negligible, then it is also negligible in the sense of
Definition~\ref{defi:H-negligible}.
\begin{prop}
  \label{prop:negligible}
  If $A:L_{ \Phi,\loc}^{2}(\Omega) \to L_{ \Phi,\loc}^{2}(\Omega)$ is
  uniformly properly supported, then the following statements are
  equivalent:
  \begin{enumerate}
  \item \label{item:negl-1} $A\equiv_\Phi 0$.
  \item \label{item:negl-2} for all $\chi_j\in\Cinf_0(\Omega)$,
    $j=1,2$, there exist $\h_0>0$, $C>0$ such that, for all
    $\h\in(0,\h_0]$,
    \[
    \norm{\chi_2 A \chi_1}_{\mathcal{L}(L^2_{\Phi}(\Omega),
      L^2_{\Phi}(\Omega))} \leq C e^{-\frac{1}{C\h}}.
    \]
  \item \label{item:negl-3} there exist
    $\Phi_j\in \mathscr{C}(\Omega;\RM)$, $j=1,2$ such that
    $\Phi_2<\Phi<\Phi_1$ and
    \[
    A = \mathcal{O}(1) : L_{ \Phi_1,\loc}^{2}(\Omega) \to L_{
      \Phi_2,\loc}^{2}(\Omega)
    \]
  \end{enumerate}
\end{prop}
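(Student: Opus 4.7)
I would prove the cycle $(1) \Rightarrow (2) \Rightarrow (3) \Rightarrow (1)$. The implications $(1) \Rightarrow (2)$ and $(3) \Rightarrow (1)$ are essentially bookkeeping, while the substantive step is $(2) \Rightarrow (3)$.

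For $(3) \Rightarrow (1)$, the pointwise inequality $e^{-2\Phi_1/\h} \leq e^{-2\Phi/\h}$ (valid since $\Phi_1 > \Phi$) shows that the inclusion $L^2_{\Phi,\loc} \hookrightarrow L^2_{\Phi_1,\loc}$ is uniformly continuous with norm $\leq 1$, so composing with the $L^2_{\Phi_1,\loc}\to L^2_{\Phi_2,\loc}$ bound from (3) gives $A = \mathcal{O}(1) : L^2_{\Phi,\loc} \to L^2_{\Phi_2,\loc}$ with $\Phi_2 < \Phi$, which is (1). For $(1) \Rightarrow (2)$, write $A = \mathcal{O}(1): L^2_{\Phi,\loc} \to L^2_{\tilde\Phi_2,\loc}$ with $\tilde\Phi_2 < \Phi$. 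For $\chi_j \in \Cinf_0(\Omega)$, the function $\chi_1 u$ is compactly supported with $\|\chi_1 u\|_{L^2_\Phi} \leq \|\chi_1\|_\infty \|u\|_{L^2_\Phi}$, so uniform continuity yields a compact $K \supset \spt\chi_2$ with $\|A\chi_1 u\|_{L^2_{\tilde\Phi_2}(K)} \leq C\|u\|_{L^2_\Phi}$. Setting $\epsilon := \inf_{\spt\chi_2}(\Phi - \tilde\Phi_2) > 0$, the estimate $e^{-2\Phi/\h} \leq e^{-2\epsilon/\h}e^{-2\tilde\Phi_2/\h}$ on $\spt\chi_2$ gives $\|\chi_2 A \chi_1 u\|_{L^2_\Phi} \leq \|\chi_2\|_\infty e^{-\epsilon/\h}\|A\chi_1 u\|_{L^2_{\tilde\Phi_2}(K)}$, proving the required exponentially small bound.

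For $(2) \Rightarrow (3)$ I would cover $\Omega$ by a locally finite family of precompact open sets $U_m' \Subset U_m \Subset \Omega$ with $\bigcup_m U_m' = \Omega$. For each $m$, the uniform proper support condition supplies $V_m \Subset \Omega$ with $(\spt K_A)^{-1}(U_m) \subset V_m$; pick cutoffs $\chi_{2,m} \in \Cinf_0(U_m;[0,1])$ equal to $1$ on $U_m'$ and $\chi_{1,m} \in \Cinf_0(\Omega;[0,1])$ equal to $1$ on $V_m$. The same support argument as in Proposition~\ref{prop:chi1-chi2} gives $\chi_{2,m} A u = \chi_{2,m} A \chi_{1,m} u$ on $L^2_{\Phi,\loc}$, and hypothesis (2) furnishes $C_m, c_m > 0$ such that $\|\chi_{2,m} A \chi_{1,m}\|_{L^2_\Phi \to L^2_\Phi} \leq C_m e^{-1/(c_m\h)}$. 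Choose a partition of unity $(\eta_m)$ subordinate to $(U_m')$ and positive constants $\delta_m$ small enough—a finite local minimum, using local finiteness of the cover—that on every $U_m'$ and every $\spt\chi_{1,m}$, the perturbation $\sum_k \delta_k \eta_k$ is bounded by $1/(3c_m)$. Set $\Phi_1 := \Phi + \sum_m \delta_m \eta_m$ and $\Phi_2 := \Phi - \sum_m \delta_m \eta_m$, so that $\Phi_2 < \Phi < \Phi_1$.

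The uniform bound for $A : L^2_{\Phi_1,\loc} \to L^2_{\Phi_2,\loc}$ then chains three weight conversions on each $U_m'$. For $u \in L^2_{\Phi_1,\loc}$ and a cutoff $\tilde\chi \in \Cinf_0(\Omega)$ equal to $1$ on $\spt\chi_{1,m}$, the source step gives $\|\tilde\chi u\|_{L^2_\Phi} \leq \|\tilde\chi\|_\infty e^{1/(3c_m\h)}\|u\|_{L^2_{\Phi_1}(\spt\tilde\chi)}$ from the gap $\Phi_1 - \Phi \leq 1/(3c_m)$ on $\spt\tilde\chi$; the operator step uses (2) applied to $\tilde\chi u$ (now compactly supported, hence in $L^2_\Phi$) together with the identity $\chi_{2,m} A u = \chi_{2,m} A \chi_{1,m}(\tilde\chi u)$ to give $\|\chi_{2,m} A u\|_{L^2_\Phi} \leq C_m e^{-1/(c_m\h)}\|\tilde\chi u\|_{L^2_\Phi}$; the target step uses $\Phi - \Phi_2 \leq 1/(3c_m)$ on $U_m'$ and $\chi_{2,m} \equiv 1$ on $U_m'$ to give $\|Au\|_{L^2_{\Phi_2}(U_m')} \leq e^{1/(3c_m\h)}\|\chi_{2,m}Au\|_{L^2_\Phi}$. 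The net exponent is at most $-1/(3c_m\h)$, yielding $\|Au\|_{L^2_{\Phi_2}(U_m')} \leq C_m'\|u\|_{L^2_{\Phi_1}(\spt\tilde\chi)}$ uniformly in $\h$. Since any $\tilde\Omega_2 \Subset \Omega$ meets only finitely many $U_m'$, summing the local estimates gives the global uniform bound. The main obstacle is precisely this partition-of-unity bookkeeping: the local perturbations $\delta_m$ must be simultaneously compatible with all the neighbouring decay rates $1/c_k$ coming from (2), which is what makes local finiteness indispensable.
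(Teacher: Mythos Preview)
Your proof is correct and follows the same cycle $(1)\Rightarrow(2)\Rightarrow(3)\Rightarrow(1)$ as the paper, with essentially the same ideas in each step. One small slip: in the source step you invoke $\Phi_1-\Phi\le 1/(3c_m)$ on $\spt\tilde\chi$, but you only arranged this bound on $\spt\chi_{1,m}$; fix it by choosing the $\tilde\chi_m$ (with $\spt\tilde\chi_m\Subset\Omega$) \emph{before} selecting the $\delta_k$, and imposing the constraint on $\spt\tilde\chi_m$ instead --- local finiteness still makes the system finite, since by~\eqref{equ:unif-proper1} only finitely many $U_k'$ meet any given compact set.

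The only methodological difference worth noting is that the paper inserts an intermediate \emph{kernel} estimate: from~(2) it first produces a continuous $F(x,y)>0$ on $\Omega\times\Omega$ with $K_A=\mathcal{O}(1)e^{(-F(x,y)+\Phi(x)-\Phi(y))/\h}$, and then constructs $\Phi_1,\Phi_2$ separately so that $(\Phi-\Phi_2)(x)+(\Phi_1-\Phi)(y)<F(x,y)$ on $\spt K_A$. This decouples the $x$- and $y$-side constraints and avoids the ``simultaneous compatibility'' bookkeeping you flag at the end; your direct construction trades that conceptual detour for a slightly more tangled (but still finite and solvable) system of inequalities on the $\delta_k$.
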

Before the proof, we widen the scope and recall a notion of formal
exponential estimates of distribution kernels, introduced in Section
2.2 in \cite{HeSj85}: Let $\Omega _j\subset \CM^{n_j}$ be open for
$j=1,2$, let
$A=A_h:\, C_0^\infty (\Omega _1)\to \mathcal{ D}'(\Omega _2)$,
$0<h\le h_0$, be a linear operator with distribution kernel
$K_A\in \mathcal{ D}'(\Omega _2\times \Omega _1)$, and let
$F\in C(\Omega _2\times \Omega _1;\RM)$. Then we write
\begin{equation}\label{pf.1}
  K_A(x,y)=\widetilde{\mathcal{ O}}(1)e^{F(x,y)/h}=\widetilde{\mathcal{ O}}(e^{F(x,y)/h}),
\end{equation}
if for all $(x_0,y_0)\in \Omega _2\times \Omega _1$ and $\epsilon >0$,
there exist $C>0$ and open neighborhoods $V_{x_0}\subset \Omega _2$,
$V_{y_0}\subset \Omega _1$ of $x_0$, $y_0$ respectively, such that
$1_{V_{x_0}}\circ A$ is bounded:
$L^2_{\mathrm{comp}}(V_{y_0})\to L^2(V_{x_0}) $ with operator norm
$\le Ce^{\epsilon /h}e^{F(x_0,y_0)/h}$. Sometimes, we simply write
\begin{equation}\label{pf.2}
  A=\widetilde{\mathcal{ O}}(1)e^{F(x,y)/h}:\ L^2_{\mathrm{comp}}(\Omega
  _1)\to L^2_{\mathrm{loc}}(\Omega _2).
\end{equation}

We have (\ref{pf.1}) if and only if for all
$(x_0,y_0)\in \Omega _2\times \Omega _1$ and $\epsilon >0$, there
exist $C>0$ and
$\chi _{x_0}\subset C_0^\infty (\Omega _2;[0,+\infty [)$,
$\chi _{y_0}\subset C_0^\infty (\Omega _1;[0,+\infty [)$, equal to 1
near $x_0$ and $y_0$ respectively, such that
$\chi _{x_0}\circ A \circ \chi _{y_0}$ is bounded:
$L^2(\CM^{n_1})\to L^2(\CM^{n_2}) $ with operator norm
$\le Ce^{\epsilon /h}e^{F(x_0,y_0)/h}$.

When (\ref{pf.1}) holds it is then natural to write instead of
(\ref{pf.2}):
\begin{equation}\label{pf.3}
  A=\widetilde{\mathcal{ O}}(1)e^{F(x,y)/h}:\ L^2_{\mathrm{loc}}(\Omega
  _1)\to L^2_{\mathrm{loc}}(\Omega _2).
\end{equation}
Let $\Gamma =\overline{\bigcup_{]0,h_0]}\mathrm{supp\,}(K_{A_h}) }$,
so that the natural projections $\pi _j:\Gamma \to \Omega _j$ are
proper

\par With $F$ as above, let $\Phi _j\in C(\Omega _j;\RM)$, $j=1,2$. If
\begin{equation}\label{pf.4}
  F(x,y)+\Phi _1(y)\le \Phi _2(x) \hbox{ on }\Gamma 
\end{equation}
and (\ref{pf.1}) holds, then
\begin{equation}\label{pf.5}
  A=\widetilde{\mathcal{ O}}(1):\ L^2_{\Phi _1,\mathrm{loc}}(\Omega _1)\to L^2_{\Phi _2,\mathrm{loc}}(\Omega _2)
\end{equation}
in the following sense:
\begin{equation}\label{pf.6}\begin{split}
    &\hbox{For every open set }V_2\Subset \Omega _2, \ \exists \hbox{
      an
      open set }V_1\Subset \Omega _1,\\
    &\hbox{such that for every }\epsilon >0,\ \exists\, C>0\hbox{ such
      that}\\
    &\| Au\|_{L^2_{\Phi _2}(V_2)}\le Ce^{\epsilon /h} \|
    u\|_{L^2_{\Phi _1}(V_1)},\ \forall\ u\in L^2_{\Phi
      _1,\mathrm{loc}}(\Omega _1).
  \end{split}
\end{equation}
Conversely, if (\ref{pf.5}) holds, then
\begin{equation}\label{pf.7}
  K_A(x,y)=\widetilde{\mathcal{ O}} (1)e^{(\Phi _2(x)-\Phi _1(y))/h}.
\end{equation}

If we sharpen (\ref{pf.4}) by assuming strict inequality there, then
(\ref{pf.5}) can be replaced by the sharper statement that
\[
A=\mathcal{ O}(1):\ L^2_{\Phi _1,\mathrm{loc}}(\Omega _1)\to L^2_{\Phi
  _2,\mathrm{loc}}(\Omega _2),
\]
as defined earlier.
\begin{proof}[Proof of Proposition \ref{prop:negligible}] We
  show~\ref{item:negl-1} $\Rightarrow$~\ref{item:negl-2}
  $\Rightarrow$~\ref{item:negl-3} $\Rightarrow$~\ref{item:negl-1}.

  If~\ref{item:negl-1} holds, then there exists a continuous function
  $\Phi_2$ on $\Omega_2$ such that $\Phi_2<\Phi$ and
  \begin{equation}\label{pf.8}
    K_A(x,y)=\widetilde{\mathcal{ O}} (1)e^{(\Phi _2(x)-\Phi (y))/h}.
  \end{equation}
  It follows that if $\chi _j\in C_0^\infty (\Omega )$, $j=1,2$ and
  $\widetilde{C}>0$ and $1/\widetilde{C}$ is strictly smaller than
  \[
  \inf_{\mathrm{supp\,}\chi _2\times \mathrm{supp\,}\chi _1}(\Phi
  (x)-\Phi (y))-(\Phi _2(x)-\Phi (y)) = \inf_{\mathrm{supp\,}\chi
    _2\times \mathrm{supp\,}\chi _1}(\Phi (x)-\Phi _2(x)),
  \]
  then $\exists$ $\widehat{C}>0$ such that
  \[
  \| \chi _2A\chi _1\|_{\mathcal{ L}(L^2_\Phi (\Omega ), L^2_\Phi
    (\Omega ))}\le \widehat{C}e^{-1/(\widetilde{C}h)},
  \]
  and we get~\ref{item:negl-2} with
  $C=\max (\widetilde{C},\widehat{C})$.

  Assume~\ref{item:negl-2}. Let $U_j\Subset \Omega $, $j=1,2,\dots$ be
  open, forming a locally finite covering of $\Omega $. Let
  $1_{U_j}\le \chi _j\in C_0^\infty (\Omega ;[0,\infty [)$ so that
  \[
  \| \chi _j A\chi _k\|_{\mathcal{ L}(L^2_\Phi ,L^2_\Phi )}\le
  C_{j,k}e^{-1/(C_{j,k}h)},
  \]
  for some $C_{j,k}>0$.  Using a partition of unity
  $\psi _j\in C_0^\infty (U_j;[0,1])$ on $\Omega $ subordinated to the
  covering, and the fact that the norm of $\psi _jA\psi _k$ is bounded
  from above by that of $\chi _jA\chi _k$, we conclude that
  \begin{equation}\label{pf.9}
    A=\mathcal{ O}(1)e^{(-F(x,y)+\Phi (x)-\Phi
      (y))/h}:L^2_{\mathrm{loc}}(\Omega )\to L^2_{\mathrm{loc}}(\Omega ),
  \end{equation}
  if $0<F(x,y)\in C(\Omega \times \Omega )$ satisfies
  \begin{equation}\label{pf.10}
    F(x,y)\le \sup_{j,k}1_{U_j}(x)1_{U_k}(y)/C_{j,k}.
  \end{equation}
  An example of such a function is given by
  \begin{equation}\label{pf.11}
    F(x,y)=\sum_{j,k}\psi _j(x)\psi _k(y)/C_{j,k}.
  \end{equation}
  Sometimes, we write (\ref{pf.9}) as
  \begin{equation}\label{pf.12}
    A=\mathcal{ O}(1)e^{-F(x,y)/h}:\ L^2_{\Phi,\mathrm{loc}}(\Omega )\to L^2_{\Phi ,\mathrm{loc}}(\Omega ), 
  \end{equation}
  which also has a direct meaning similar to (\ref{pf.5}),
  (\ref{pf.6}).

  \par It follows that
  \begin{equation}\label{pf.13}
    A=\mathcal{ O}(1):\ L^2_{\Phi _1,\mathrm{loc}}(\Omega )\to L^2_{\Phi _2,\mathrm{loc}}(\Omega ) 
  \end{equation} 
  and hence that~\ref{item:negl-3} holds, provided that we can find
  continuous functions $\Phi _1$, $\Phi _2$ on $\Omega $ such that
  $\Phi _2<\Phi <\Phi _1$ and
  \begin{equation}\label{pf.14}
    \Phi _2(x)>-F(x,y)+\Phi (x)-\Phi (y)+\Phi _1(y),\ (x,y)\in \Gamma ,
  \end{equation}
  where the last estimate is equivalent to
  \begin{equation}\label{pf.15}
    (\Phi (x)-\Phi _2(x))+(\Phi _1(y)-\Phi (y))<F(x,y)\hbox{ on }\Gamma .
  \end{equation}

  \par It suffices to find continuous functions $\Phi _j$ such that
  \begin{equation}\label{pf.16}
    0<(\Phi -\Phi _2)(x)<\frac{1}{2}\inf_{y\in \Gamma ^{-1}(x)} F(x,y),
  \end{equation}
  and
  \begin{equation}\label{pf.17}
    0<(\Phi_1 -\Phi)(y)<\frac{1}{2}\inf_{x\in \Gamma (y)} F(x,y),
  \end{equation}
  where $\Gamma $ is viewed as a relation $\Omega _1\to \Omega _2$. By
  the properness of the projections
  $\Gamma \ni (x,y)\mapsto x\in \Omega $,
  $\Gamma \ni (x,y)\mapsto y\in \Omega $ and the continuity of $F>0$
  the right hand sides of (\ref{pf.16}), (\ref{pf.17}) are locally
  bounded from below by constants $>0$. We can then construct
  $\Phi -\Phi _2>0$, $\Phi _1-\Phi >0 $ as in (\ref{pf.11}) with the
  difference that we use a partition of unity in $y$ or in $x$
  only. We have shown the implication~\ref{item:negl-2}
  $\Rightarrow$~\ref{item:negl-3}.

  \par Finally, if~\ref{item:negl-3} holds, then
  \[
  A=\mathcal{ O}(1):\, L^2_{\Phi ,\mathrm{loc}}(\Omega )\to
  L^2_{\Phi_2 ,\mathrm{loc}}(\Omega ),
  \]
  which implies~\ref{item:negl-1}.
\end{proof}
In particular, item~\ref{item:negl-2} above gives the
\begin{coro}
  \label{coro:adjoint}
  If $A$ is uniformly continuous and uniformly properly supported,
  then $A\equiv_\Phi 0$ if and only if $A^*\equiv_\Phi 0$.
\end{coro}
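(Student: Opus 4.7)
The plan is to derive the statement directly from the characterization~(\ref{item:negl-2}) of Proposition~\ref{prop:negligible}, plus the fact that Hilbert-space adjoints preserve operator norms. Assume $A\equiv 0$. By~(\ref{item:negl-2}), for every pair of cutoffs $\chi_1,\chi_2\in\Cinf_0(\Omega)$ there exist $C,\h_0>0$ such that
\[
\|\chi_2 A\chi_1\|_{\mathcal{L}(L^2_\Phi(\Omega))}\le Ce^{-1/(C\h)}, \quad 0<\h\le\h_0.
\]
I would like to transport this estimate to $A^*$.

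The key observation is that the operator norm on the Hilbert space $L^2_\Phi(\Omega)$ is preserved by Hilbert-space adjunction. Applying this to the compactly supported operator $\chi_2 A\chi_1$, one obtains the same exponentially small bound for its Hilbert adjoint, which (after sorting out how the weight enters the adjoint formula on compactly supported pieces) is $\overline{\chi_1}\,A^*\,\overline{\chi_2}$ up to a harmless bounded identification. Since $\chi_1,\chi_2$ were arbitrary, applying~(\ref{item:negl-2}) in the reverse direction to $A^*$ yields $A^*\equiv 0$. The converse direction is identical, by swapping the roles of $A$ and $A^*$ and using $A^{**}=A$. Note that uniform proper support passes to $A^*$ automatically since taking the formal adjoint swaps the two containment conditions \eqref{equ:unif-proper2} and \eqref{equ:unif-proper1}.

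The step that I expect to require the most care is the identification, on compactly supported pieces, between the formal adjoint (defined via the kernel $\overline{K_A(y,x)}$) and the Hilbert-space adjoint with respect to the $L^2_\Phi$ inner product. This is precisely the content of the short calculation used in the proof of the corollary to Proposition~\ref{prop:chi1-chi2}, where one observes that the adjoint of $\chi_2A\chi_1$ on the weighted Hilbert space is $\overline{\chi_1}A^*\overline{\chi_2}$. Once this identification is in place, no additional estimate is required beyond the trivial preservation of operator norm under Hilbert adjunction, so there is no substantive analytic obstacle.
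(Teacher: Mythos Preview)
Your proposal is correct and follows essentially the same approach as the paper, which simply states that the corollary is an immediate consequence of item~\ref{item:negl-2} in Proposition~\ref{prop:negligible}. Your additional remarks about the identification of the formal and Hilbert-space adjoints on compactly supported pieces, and about uniform proper support passing to $A^*$, merely spell out in more detail what the paper leaves implicit.
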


\subsection{The approximate Bergman projection}

Let $\Omega\subset\CM^n$ be an open subset, and assume that $\Phi$ is
real analytic and pointwise strictly plurisubharmonic on $\Omega$:
\begin{equation}
  \forall x_0\in\Omega, \exists m_0>0, \qquad 
  m_0\textup{Id} \leq ( \partial^2_{x_i,
    \bar{x}_j} \Phi(x_0) )_{i, j=1}^n.
\end{equation}
In this section we use the $\Brg$ quantization
(Definition~\ref{defi:brg-quantization}) to construct an
\emph{approximate Bergman projection} on $\Omega$, in the sense of
Proposition~\ref{prop:approx_proj} below.

We first work on germs of functions near a point $x_0\in\CM^n$; in
this case, the result directly follows from Theorem
\ref{theom}. Indeed, we apply the second assertion of Theorem
\ref{theom} to the operator $ \Op^{\w}_\h(b_{ \hbar}^{\w})$ whose
classical analytic symbol $b_{ \hbar}^{\w}$ is chosen equal to $1$. We
obtain a classical analytic symbol $a_\h$ near $(x_0,\bar x_0)$ such
that the following holds. \begin{prop}
  \label{prop:pi_x0} There exists $r>0$ and a neighbourhood $\Omega_0$
  of $x_0$ such that the operator $\Pi_{x_0}:=\Opbrg_r(a_\h)$ has the
  following properties.
  \begin{enumerate}
  \item \label{item:pi_x0:selfadjoint}
    $\Pi_{x_0}\equiv_\Phi \Pi_{x_0}^*$ on $\lphi(\Omega_0)$.
  \item \label{item:pi_x0:holomo} For all $u\in \lphi(\Omega_0)$,
    $\Pi_{x_0}u \in H_\Phi(\Omega_0):=\Hol(\Omega_0)\cap
    \lphi(\Omega_0)$.
  \item \label{item:pi_x0:repro} There exists
    $\Phi_2\in\Cinf(\Omega_0;\RM)$ with $\Phi_2<\Phi$ and such that
    for all $\Omega_2\Subset \Omega_0$, there exists $C>0$,
    independent of $\h$, with
    \begin{equation}
      \forall u\in H_\Phi(\Omega_0), \quad 
      \norm{\Pi_{x_0} u - u}_{L^2_{\Phi_2}(\Omega_2)} \leq C \norm{u}_{\lphi(\Omega_0)}.
    \end{equation}
  \end{enumerate}
\end{prop}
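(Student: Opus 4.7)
The plan is to produce $a_\h$ via Theorem~\ref{theom}, item~\ref{item:2}, applied with the constant Weyl symbol $b^{\w}_\h\equiv 1$: this yields a classical analytic symbol $a_\h(x,w)$ near $(x_0,\bar x_0)$ such that $\Op^{\w}_\h(1)\hnegl\Opbrg(a_\h)$ on $H_{\Phi,x_0}$. Since $\Op^{\w}_\h(1)$ acts as the identity on $\widetilde H_{\Phi,x_0}$ by the Fourier inversion formula recalled in Section~\ref{sec:analyt-pseudo-diff}, this amounts to $\Opbrg(a_\h)u\sim u$ for all $u\in H_{\Phi,x_0}$. I fix $r>0$ so small that $k_\h(x,y)$ is defined whenever $|x-y|<r$ and that~\eqref{equ:brg-bon-contour} is in force, then set $\Omega_0=B(x_0,\rho)$ with $\rho<r/2$, so that $\Omega_0\subset B(x,r)$ for every $x\in\Omega_0$; in particular $\Pi_{x_0}u(x)$ is the integral of $k_\h(x,y)u(y)$ over the $x$-independent domain $\Omega_0$.

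Items~\ref{item:pi_x0:holomo} and~\ref{item:pi_x0:repro} then follow directly. For~\ref{item:pi_x0:holomo}, the kernel $k_\h(x,y)$ is holomorphic in $x$ because each of $\psi(x,\bar y)$, $a_\h(x,\bar y)$ and $\det(\partial_{\tilde w}\partial_x\psi)(x,\bar y)$ is; integrating over the fixed domain $\Omega_0$ preserves this, while the Gaussian bound~\eqref{equ:brg-bon-contour} makes $\Pi_{x_0}\colon\lphi(\Omega_0)\to H_\Phi(\Omega_0)$ uniformly bounded. For~\ref{item:pi_x0:repro}, the H-negligibility of $\Pi_{x_0}-I$ (Definition~\ref{defi:H-negligible}) furnishes, possibly after a further shrinking of $\Omega_0$ to lie inside the associated neighbourhood, the continuous function $\Phi_2<\Phi$ on $\Omega_0$ together with the required estimate on every $\Omega_2\Subset\Omega_0$.

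The main work lies in item~\ref{item:pi_x0:selfadjoint}. A direct kernel computation, using the symmetry $\psi(x,w)=\overline{\psi(\bar w,\bar x)}$ from~\eqref{equ:bar-psi}, shows that $\Pi_{x_0}^{*}$ is again a Brg-operator $\Opbrg_r(\tilde a_\h)$ whose symbol $\tilde a_\h$ is classical analytic near $(x_0,\bar x_0)$. A soft duality argument then shows that the reproducing property of~\ref{item:pi_x0:repro}, combined with the pairing $(\Pi_{x_0}u|v)_\lphi=(u|\Pi_{x_0}^{*}v)_\lphi$, forces $\Pi_{x_0}^{*}v\sim v$ for $v\in H_{\Phi,x_0}$ as well (the conjugated kernel depends antiholomorphically on the integration variable, so $\Pi_{x_0}^{*}v$ is automatically holomorphic, which allows one to test orthogonality against $u=\Pi_{x_0}^{*}v-v$). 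Applying Theorem~\ref{theom}, item~\ref{item:1}, to $\tilde a_\h$ gives a Weyl symbol $\tilde b^{\w}_\h$ with $\Pi_{x_0}^{*}\hnegl\Op^{\w}_\h(\tilde b^{\w}_\h)$; the uniqueness of the Weyl symbol attached to an operator modulo H-negligibility (Section~\ref{sec:analyt-pseudo-diff}) then yields $\tilde b^{\w}_\h\sim 1$, and the third assertion of Theorem~\ref{theom} in turn forces $\tilde a_\h\sim a_\h$ as formal classical analytic symbols.

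To finish, I will convert the formal equivalence $\tilde a_\h\sim a_\h$ into genuine $\lphi$-negligibility: Definition~\ref{defi:cas} provides the bound $|a_\h-\tilde a_\h|\leq Ce^{-1/(C\h)}$ near $(x_0,\bar x_0)$, and combined with the Gaussian decay factor in~\eqref{equ:brg-bon-contour} this shows that $\Pi_{x_0}-\Pi_{x_0}^{*}$ is negligible in the sense of Definition~\ref{defi:negligible}. The most delicate step is the identification $\tilde b^{\w}_\h\sim 1$, which assembles the $\lphi$-adjoint computation, the H-negligibility furnished by Theorem~\ref{theom}, and the reproducing property for $\Pi_{x_0}^{*}$ obtained by duality from the one already proved for $\Pi_{x_0}$.
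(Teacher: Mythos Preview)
Your treatment of items~\ref{item:pi_x0:holomo} and~\ref{item:pi_x0:repro} is essentially the paper's argument and is fine.

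For item~\ref{item:pi_x0:selfadjoint} your route is genuinely different from the paper's, and the ``soft duality'' step has a gap. Writing $w=\Pi_{x_0}^{*}v-v$ and testing $u=w$ gives
\[
\|w\|_{\lphi(\Omega_0)}^{2}=((\Pi_{x_0}-I)w\mid v)_{\lphi(\Omega_0)}.
\]
The estimate from item~\ref{item:pi_x0:repro} controls $(\Pi_{x_0}-I)w$ only in $L^{2}_{\Phi_2}(\Omega_2)$ for $\Omega_2\Subset\Omega_0$; after Cauchy--Schwarz this makes the $\Omega_2$-part of the pairing exponentially small, but the annular contribution over $\Omega_0\setminus\Omega_2$ is bounded only by $C\|w\|_{\lphi(\Omega_0)}\,\|v\|_{\lphi(\Omega_0\setminus\Omega_2)}$, with no smallness at all. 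You therefore cannot conclude that $\|w\|_{\lphi(\Omega_0)}$ (or even $\|w\|_{\lphi}$ on a smaller ball) is exponentially small, and hence you do not obtain $\Pi_{x_0}^{*}v\sim v$. Shrinking $\Omega_0$ does not help, since that changes the adjoint and hence the pairing, while item~\ref{item:pi_x0:repro} on the new domain still loses a collar; the argument is circular. Without this step, the identification $\tilde b^{\w}_\h\sim 1$ --- and with it the whole chain through Theorem~\ref{theom}, item~3 --- collapses.

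The paper avoids this entirely. It never tries to show $\Pi_{x_0}^{*}\sim I$. Instead it observes that $\Pi_{x_0}^{*}u$ is holomorphic (your observation too) and applies item~\ref{item:pi_x0:repro} to this function, obtaining the \emph{operator} statement $(\Pi_{x_0}-1)\Pi_{x_0}^{*}\equiv 0$. Taking the formal adjoint (Corollary~\ref{coro:adjoint}) yields $\Pi_{x_0}(\Pi_{x_0}^{*}-1)\equiv 0$. After sandwiching by a cutoff $\chi_3$ equal to $1$ near $x_0$, both relations read $\chi_3\Pi_{x_0}\Pi_{x_0}^{*}\chi_3\equiv\chi_3\Pi_{x_0}^{*}\chi_3$ and $\chi_3\Pi_{x_0}\Pi_{x_0}^{*}\chi_3\equiv\chi_3\Pi_{x_0}\chi_3$, whence $\Pi_{x_0}\equiv\Pi_{x_0}^{*}$ on a slightly smaller neighbourhood. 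This ``$\Pi\Pi^{*}$'' trick trades your functional-analytic duality for a composition followed by an operator adjoint, and the latter is tautological --- there is no boundary loss to control.
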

\begin{proof}
  Using the notation of Definition~\ref{defi:brg-quantization}, let
  $\tilde\Omega_0 := B((x_0,\bar x_0), \tilde r)$ and
  $\Omega_0:=B(x_0,r)$. We can write~\eqref{equ:kernel-brg} as in
  integral on $\CM^n$ by replacing the distribution kernel $k_\h(x,y)$
  of $\Opbrg(a_\h)$ by
  $1_{\abs{x-y}<r} 1_{(x,\bar y)\in \tilde \Omega_0} k_\h(x,y)$.
  Since $a_\h(x,\bar y)$ is holomorphic in $x$, this distribution
  kernel is locally holomorphic in $x$ for almost all $y$, which gives
  Item~\ref{item:pi_x0:holomo}.

  The symbol $a_\h$ given by Theorem~\ref{theom} is such that
  $\Opbrg(a_\h)\hnegl \Op^{\w}_\h(1)\hnegl\textup{Id}$, acting on
  $H_{\Phi,x_0}$.  Thus, $\Pi_{x_0}-\textup{Id} \hnegl 0$, which gives
  Item~\ref{item:pi_x0:repro} (up to choosing a smaller neighbourhood
  $\Omega_0$, if necessary).  Let $\Pi_{x_0}^*$ be the adjoint of
  $\Pi_{x_0}$, viewed as an operator on $\lphi(\CM^n)$.  For any
  $u\in \lphi(\Omega_0)$, we have
  $\Pi_{x_0}^* u = \Opbrg(\tilde a_\h)u$, where
  $\tilde{a}_\h(x,y) = \overline{a(y,x)}$. Hence
  Item~\ref{item:pi_x0:holomo} holds for $\Pi_{x_0}^*$ as
  well. Therefore, Item~\ref{item:pi_x0:repro} implies
  \begin{equation}
    \norm{\Pi_{x_0} \Pi_{x_0}^*u - \Pi_{x_0}^*u}_{L^2_{\Phi_2}(\Omega_2)}
    \leq C \norm{\Pi_{x_0}^*u}_{\lphi(\Omega_0)} \leq \tilde C
    \norm{u}_{\lphi(\Omega_0)},
  \end{equation}
  where the last inequality follows from the fact that all $\Brg$
  operators with bounded symbols are uniformly bounded in
  $\lphi(\Omega_0)$; this is a consequence
  of~\eqref{equ:brg-bon-contour} and the Schur test. In other words,
  if $\chi_0\in\Cinf_0(\CM^n)$ is equal to $1$ on $\Omega_0$ and
  $\chi_2\in\Cinf_0(\Omega_2)$, we have
  \begin{equation}
    \label{equ:brg-selfadjoint1}
    \chi_2 (\Pi_{x_0} - 1)\Pi_{x_0}^* \chi_0 \equiv_\Phi 0.
  \end{equation}
  The operator $\chi_2 (\Pi_{x_0} - 1)\Pi_{x_0}^* \chi_0$ is uniformly
  properly supported and uniformly continuous on $\lphi(\CM^n)$.
  Hence, by Corollary~\ref{coro:adjoint}, we may take the adjoint:
  \begin{equation}
    \label{equ:brg-selfadjoint2}
    \chi_0 \Pi_{x_0}(\Pi_{x_0}^* - 1) \chi_2\equiv_\Phi 0\,.
  \end{equation}
  Assume that $\chi_2=1$ on an open neighbourhood $\Omega_3$ of $x_0$
  and let $\chi_3$ be a bounded function with compact support in
  $\Omega_3$. Multiplying on both sides~\eqref{equ:brg-selfadjoint1}
  and~\eqref{equ:brg-selfadjoint2} by $\chi_3$, we get
  \[
  \chi_3 \Pi_{x_0}^* \chi_3 \equiv_\Phi \chi_3 \Pi_{x_0}\Pi_{x_0}^* \chi_3
  \]
  and
  \[
  \chi_3 \Pi_{x_0} \chi_3 \equiv_\Phi \chi_3 \Pi_{x_0}\Pi_{x_0}^* \chi_3\,,
  \]
  and hence
  \[
  \chi_3\Pi_{x_0}^* \chi_3 \equiv_\Phi \chi_3\Pi_{x_0} \chi_3\,.
  \]
  Up to replacing $\Omega_0$ by a slightly smaller open set
  $\Omega_0'\Subset\Omega_3$ (which does not impact
  Items~\ref{item:pi_x0:holomo} and~\ref{item:pi_x0:repro}), and
  letting and $\chi_3=1_{\Omega_0'}$, we get
  Item~\ref{item:pi_x0:selfadjoint}.

\end{proof}

Next we globalize the operator $\Pi_{x_0}$ observing that, because of
the uniqueness in Theorem~\ref{theom}, the formal analytic symbol
$\hat a_\h(x,y)\sim\sum_j a_j(x,y) \h^j$ associated with $a_\h$ is in
fact well defined in a neighbourhood
$\Omega^{(2)}\subset \Omega\times\overline{\Omega}$ of the
antidiagonal
\[
\adiag(\Omega\times\overline{\Omega}) := \{(x, \bar x); \quad
x\in\Omega\}.
\]
For each $x_0\in\Omega$, there exists a small ball $\Omega_0$ around
$x_0$ and a constant $C_{x_0}>0$ such that
\begin{equation}
  \sup_{\Omega_0\times\overline{\Omega_0}} \abs{a_j} \leq C_{x_0}^{j+1}j^j.
\end{equation}
Thus $\hat a_\h\in\hat S^0(\Omega^{(2)})$ in the sense of
Definition~\ref{defi:formal}. Moreover, by
Item~\ref{item:pi_x0:selfadjoint} of Proposition~\ref{prop:pi_x0}, we
have $a_j(x,y) = \overline{a_j(y,x)}$ for all $j$. Using a covering of
$\Omega$ by such balls, one can construct a smooth function
$\mathcal{C}=\mathcal{C}(x,y)\in\Cinf(\Omega^{(2)}; \RM^*_+)$ such
that $\mathcal{C}(x,y) = \mathcal{C}(y,x)$ and
\begin{equation}
  \label{equ:C(x,y)}
  \forall (x,y)\in\Omega^{(2)}; \quad \abs{a_j(x,y)} \leq
  \mathcal{C}(x,y)^{j+1}j^j.
\end{equation}
Now put
\[
a_\mathcal{C}(x,y;\h):= \sum_{j\geq 0} \theta(j \h
\mathcal{C}(x,y))a_j(x,y)\h^j,
\]
where $\theta\in\Cinf_0([0,1[\,; [0,1])$ is equal to $1$ on
$[0,\frac{1}{2}]$. Then $a_\mathcal{C}\in\Cinf(\Omega^{(2)})$ and
\begin{equation}
  \label{equ:ac-a}
  a_\mathcal{C} - a_\h = \O(e^{-1/\hat C\h}) \quad \text{ in }
  \Omega_0\times\overline{\Omega_0},
\end{equation}
where $\hat C>0$ depends on $x_0$, $\mathcal{C}$ and
$\theta$. Moreover, $a_\mathcal{C}$ is `exponentially close' to a good
classical analytic symbol, in that there exists a smooth function
$\mathcal{C}_1(x,y)>0$ such that
\begin{equation}
  a_\mathcal{C}(x,y)  - \sum_{0\leq j \leq \frac{1}{2\h \mathcal{C}(x,y)}} a_j(x,y)\h^j = \O(e^{{-1}/{\mathcal{C}_1(x,y)\h}})
\end{equation}
and
\begin{equation}
  \label{equ:dbar-a_C}
  \dbar_{x,y} a_\mathcal{C}(x,y) = \O(e^{{-1}/{\mathcal{C}_1(x,y)\h}}).
\end{equation}
Let $\chi\in\Cinf(\Omega\times\overline{\Omega}; \RM)$ satisfy
$\chi(x,\bar y) = \chi(y, \bar x)$, be supported in $\Omega^{(2)}$,
and equal to $1$ near $\adiag(\Omega\times\overline{\Omega})$. We
extend the $\Brg$ quantization by putting
\begin{equation}
  \label{equ:Pi_C}
  (\Pi_{\mathcal{C}, \chi} u) (x) = \frac{2^n}{(\pi\h)^n} \int_{\Omega}
  e^{\frac{2}{\h} (\psi(x, \bar{y})- \Phi(y))}
  a_{\mathcal{C}}(x,\bar{y})  u(y) \chi(x,\bar y)
  \det(\partial^2_{x,w} \psi)(x, \bar{y}) L(\DD y).
\end{equation}
\begin{prop}
  \label{prop:approx_proj}
  The operator $\Pi_{\mathcal{C}, \chi}$ has the following properties.
  \begin{enumerate}
  \item \emph{Continuity:}
    \label{item:pi_chi-contintuity}
    $\Pi_{\mathcal{C}, \chi}: L_{ \Phi,\loc}^{2}(\Omega) \to L_{
      \Phi,\loc}^{2}(\Omega)$
    is uniformly properly supported and uniformly continuous.
  \item \emph{Self-adjointness:}
    \label{item:pi_chi-selfadjoint}
    $\Pi_{\mathcal{C}, \chi} = \Pi_{\mathcal{C}, \chi}^*$.
  \item \label{item:localization}\emph{Exponential localization:} If
    $K\subset\Omega$ is closed, there exists
    $\Phi_2\in\mathscr{C}^0(\Omega;\RM)$, $\Phi_2\leq \Phi$ with
    \[
    \Phi_2 < \Phi \quad \text{ on } \Omega\setminus K
    \]
    such that
    \[
    \Pi_{\mathcal{C}, \chi} = \mathcal{O}(1): L_{ \Phi,\loc}^{2}(K)
    \to L_{ \Phi_2,\loc}^{2}(\Omega),
    \]
    where
    $L_{ \Phi,\loc}^{2}(K):=\{u\in L_{ \Phi,\loc}^{2}(\Omega); \quad
    \spt u \subset K\}$.
  \end{enumerate}
\end{prop}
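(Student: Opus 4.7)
The plan is to derive all three items from the pointwise kernel estimate~\eqref{equ:brg-bon-contour} together with the symmetries built into $\chi$, $\mathcal{C}$, and the $a_j$. For item~\ref{item:pi_chi-contintuity}, the kernel of $\Pi_{\mathcal{C},\chi}$ is supported in $\spt\chi\subset\Omega^{(2)}$; choosing $\chi$ to have support in a neighbourhood of the antidiagonal of the form $\{|x-y|<r(x)\}$ with continuous $r>0$ yields uniform proper support. Applying a Schur test to $K(x,y)e^{(\Phi(y)-\Phi(x))/\h}$, bounded via~\eqref{equ:brg-bon-contour} using local plurisubharmonicity constants and the boundedness of $a_{\mathcal{C}}$, $\det(\partial^2_{x,w}\psi)$, $\chi$ on compact subsets of $\Omega^{(2)}$, yields an $\h$-uniform estimate: the prefactor $\h^{-n}$ in the kernel is absorbed by the Gaussian volume $\int e^{-c|x-y|^2/\h} L(\DD y)=\mathcal{O}(\h^n)$. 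Proposition~\ref{prop:chi1-chi2} then promotes this into uniform $\lphi$-continuity.

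For item~\ref{item:pi_chi-selfadjoint}, the kernel of the formal adjoint with respect to $\pscal{\cdot}{\cdot}_{\lphi(\Omega)}$ is $\overline{K(y,x)}e^{2(\Phi(x)-\Phi(y))/\h}$, and the identity $K=K^*$ reduces to a pointwise identity between this and the explicit formula~\eqref{equ:Pi_C}. The exponential factors match via the polarization~\eqref{equ:bar-psi}, which gives $\overline{\psi(y,\bar x)}=\psi(x,\bar y)$. The Jacobian $D(x,\bar y):=\det(\partial^2_{x,w}\psi)(x,\bar y)$ obeys the analogous identity $\overline{D(y,\bar x)}=D(x,\bar y)$, being the holomorphic extension of the real-valued determinant of the Hermitian matrix $\partial^2_{x_i\bar x_j}\Phi$. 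The remaining symbol identity $\overline{a_\mathcal{C}(y,\bar x)}=a_\mathcal{C}(x,\bar y)$ follows from the stated symmetry $\mathcal{C}(x,\bar y)=\mathcal{C}(y,\bar x)$, reality of $\theta$, and the relation $\overline{a_j(y,\bar x)}=a_j(x,\bar y)$ recorded just before~\eqref{equ:C(x,y)} (itself inherited from item~\ref{item:pi_x0:selfadjoint} of Proposition~\ref{prop:pi_x0}); the factor $\chi$ is handled by its imposed symmetry.

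For item~\ref{item:localization}, fix a closed $K\subset\Omega$. For $u\in L^2_{\Phi,\loc}(K)$, the $y$-integration in~\eqref{equ:Pi_C} is restricted to $y\in K\cap\{|x-y|<r(x)\}$, so $|x-y|\geq d(x,K)$, and the kernel estimate~\eqref{equ:brg-bon-contour} yields a local Gaussian gain $e^{-c(x) d(x,K)^2/\h}$. I would then construct $\Phi_2$ in the spirit of Proposition~\ref{prop:phi1}, by a partition-of-unity procedure on a locally finite cover $(U_m)$ of $\Omega$: set $\Phi_2=\Phi-\sum_m \eta_m\, \tilde c_m\, \delta_m^2$, where $\delta_m$ is a smooth truncation of $d(\cdot,K)$ on $U_m$, the $\eta_m\in \Cinf_0(U_m;[0,1])$ form a subordinated partition of unity, and $\tilde c_m>0$ is strictly smaller than the local decay rate. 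Then $\Phi_2$ is continuous, $\Phi_2\leq\Phi$ with equality on $K$, and the Schur bound from item~\ref{item:pi_chi-contintuity} combined with the Gaussian gain yields uniform continuity $L^2_{\Phi,\loc}(K)\to L^2_{\Phi_2,\loc}(\Omega)$. The main obstacle is exactly this gluing step: the local plurisubharmonicity constant, the support radius $r(x)$ of $\chi$, and the available Gaussian decay rate all vary with position, and one must choose the $\tilde c_m$ and the truncations $\delta_m$ so that the local losses $\Phi-\Phi_2$ are everywhere strictly dominated by the available kernel decay while producing a globally continuous $\Phi_2$ strictly below $\Phi$ on $\Omega\setminus K$.
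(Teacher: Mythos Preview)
Your proposal is correct and follows essentially the same route as the paper: proper support from $\spt\chi\subset\Omega^{(2)}$, uniform continuity via the Schur test and~\eqref{equ:brg-bon-contour}, and self-adjointness from the symmetries $\overline{a_{\mathcal{C}}(y,x)}=a_{\mathcal{C}}(x,y)$, $\overline{\psi(y,\bar x)}=\psi(x,\bar y)$, etc.\ (the paper records this in one line, you spell it out).

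For item~\ref{item:localization}, your argument is sound, but the paper organizes the gluing slightly differently, in a way that sidesteps the obstacle you flag at the end. Rather than working with the continuous function $d(x,K)$ and trying to control the spatially varying decay rate directly, the paper fixes an arbitrary compact $K_2\subset\Omega\setminus K$, observes that $\mathrm{dist}(K,K_2)=\delta>0$ is a fixed positive constant, and then~\eqref{equ:brg-bon-contour} gives a \emph{constant} gain $e^{-c(K_2)/\h}$ for the restricted operator $1_{K_2}\,\Pi_{\mathcal{C},\chi}\,1_K$. With a locally finite cover of $\Omega\setminus K$ by such compacts, the construction of $\Phi_2$ then reduces verbatim to the partition-of-unity argument of Proposition~\ref{prop:phi1}, with no need to track how the local plurisubharmonicity constant or the support radius vary. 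This buys you a cleaner proof with one less thing to worry about; your version works too, but requires exactly the care you describe.
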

\begin{proof}
  The fact that $\Pi_{\mathcal{C}, \chi}$ is properly supported holds
  if $\Omega^{(2)}$ is chosen close enough to the antidiagonal. In
  this case the projections on $x$ or $y$ of any closed subset of
  $\Omega^{(2)}$ will be proper.

  The uniform continuity of $\Pi_{\mathcal{C}, \chi}$, as in the proof
  of Proposition~\ref{prop:pi_x0} above, follows
  from~\eqref{equ:brg-bon-contour}. This gives
  Item~\ref{item:pi_chi-contintuity}.

  Item~\ref{item:pi_chi-selfadjoint} (selfadjointness) is deduced from
  the fact that
  $\overline{a_{\mathcal{C}}(x,y)} = a_{\mathcal{C}}(y,x)$.

  In order to prove Item~\ref{item:localization}, we remark that if
  $K_2\subset\Omega\setminus K$ is compact, then the distance $\delta$
  between $K$ and $K_2$ is positive. Hence, the distribution kernel of
  the restriction of
  $\Pi_{\mathcal{C}, \chi}: L_{ \Phi,\loc}^{2}(K) \to L_{
    \Phi,\loc}^{2}(K_2)$
  is of the form $1_{\abs{x-y}>\delta} 1_{K_2}(x)k_\h(x,y)1_{K}(y)$,
  where $k_\h$ is the original kernel of~\eqref{equ:Pi_C}. In view
  of~\eqref{equ:brg-bon-contour}, the norm of this restriction is
  $\mathcal{O}(e^{-c(K_2)/\h})$ for some $c(K_2)>0$. Using a partition
  of unity of $\Omega\setminus K$, we construct a function $\Phi_2$ as
  in the proof of Proposition~\ref{prop:phi1}, and we obtain
  Item~\ref{item:localization}.
\end{proof}
Note that $\Pi_{\mathcal{C}, \chi} u $ is no longer holomorphic since
the presence of $\mathcal{C}$ and $\chi$ destroys holomorphy, but we
see that $\h\dbar\Pi_{\mathcal{C}, \chi} u$ is `exponentially
small'. To formulate this, we define appropriate spaces.

\begin{defi}
  Let $\Phi_1\in\mathscr{C}^0(\Omega;\RM)$. We define
  \begin{equation}
    H^\loc_{\Phi,\Phi_1}(\Omega) := \{u\in L^2_{\Phi,\loc}(\Omega); 
    \quad \h\partial u \in L^ 2_{\Phi_1,\loc}(\Omega)\}\cdot     
  \end{equation}
\end{defi}
$H^\loc_{\Phi,\Phi_1}$ is a Fréchet space when equipped with the
natural semi-norms, and this space injects uniformly continuously into
$L^2_{\Phi,\loc}(\Omega)$.
\begin{prop}
  \label{prop:pi_chi-holom}
  There exists $\Phi_1\in\mathscr{C}^0(\Omega;\RM)$ with $\Phi_1<\Phi$
  such that
  \[
  \Pi_{\mathcal{C}, \chi} = \mathcal{O}(1): L_{ \Phi,\loc}^{2}(\Omega)
  \to H^\loc_{\Phi,\Phi_1}(\Omega).
  \]
\end{prop}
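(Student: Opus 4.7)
The approach is to differentiate the integral representation~\eqref{equ:Pi_C} of $\Pi_{\mathcal{C},\chi}$ with respect to $\dbar_x$ under the integral sign, and to show that the resulting operator is $\mathcal{O}(1)$ from $L^2_{\Phi,\loc}(\Omega)$ into $L^2_{\Phi_1,\loc}(\Omega)$ for some continuous $\Phi_1 < \Phi$. The matching $L^2_{\Phi,\loc}$ bound on $\Pi_{\mathcal{C},\chi} u$ itself is already in Item~\ref{item:pi_chi-contintuity} of Proposition~\ref{prop:approx_proj}.

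In the integrand of~\eqref{equ:Pi_C}, the phase $e^{(2/\h)\psi(x,\bar y)}$ and the Jacobian $\det(\partial^2_{x,w}\psi)(x,\bar y)$ are holomorphic in $x$, so $\dbar_x$ acts only on the amplitudes $a_{\mathcal{C}}(x,\bar y)$ and $\chi(x,\bar y)$, producing a sum of two pieces. For the piece with $\dbar_x a_{\mathcal{C}}(x,\bar y)$, the almost-holomorphy bound~\eqref{equ:dbar-a_C} supplies a factor $\mathcal{O}(e^{-1/(\mathcal{C}_1(x,\bar y)\h)})$ that is uniform on compact subsets of $\Omega^{(2)}$. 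Combined with~\eqref{equ:brg-bon-contour}, the kernel of $\h\dbar_x$ applied to this piece is pointwise bounded by a constant times $\h^{1-n}\,e^{(\Phi(x)-\Phi(y))/\h}\,e^{-c(x)/\h}\,e^{-c'\abs{x-y}^2/\h}$, with $c(x)>0$ continuous. A Schur estimate identical to the one used in the proof of Proposition~\ref{prop:approx_proj}, together with the weight-perturbation construction of Proposition~\ref{prop:phi1}, then yields a continuous $\Phi_1' < \Phi$ such that this contribution is $\mathcal{O}(1)$ from $L^2_{\Phi,\loc}(\Omega)$ to $L^2_{\Phi_1',\loc}(\Omega)$.

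For the piece with $\dbar_x \chi(x,\bar y)$, the support of $\dbar\chi$ is disjoint from a neighbourhood of the antidiagonal, hence on every compact $K\subset\Omega\times\Omega$ there is $\delta>0$ such that $\abs{x-y}\geq \delta$ on $\spt(\dbar\chi)\cap K$. The Gaussian in~\eqref{equ:brg-bon-contour} then provides a uniform exponential gain $e^{-c''\delta^2/\h}$, and the same Schur-plus-Proposition~\ref{prop:phi1} argument produces a continuous $\Phi_1''<\Phi$ with the analogous uniform bound $L^2_{\Phi,\loc}\to L^2_{\Phi_1'',\loc}$. Taking $\Phi_1$ to be any continuous function on $\Omega$ with $\Phi_1\leq \min(\Phi_1',\Phi_1'')$ pointwise (and $\Phi_1<\Phi$) concludes the proof.

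No substantial obstacle is expected: the mechanism is that each $\dbar_x$-derivative lands on a factor which is already exponentially small in $1/\h$, allowing the resulting kernel estimates to afford a strict decrease of the exit weight via the now-standard weight-perturbation construction of Proposition~\ref{prop:phi1}. The only mild point of care is to ensure that both gains are locally uniform in $\h$ on compact subsets, which is automatic from the local uniformity of the bounds on $\dbar a_{\mathcal{C}}$ in~\eqref{equ:dbar-a_C} and from the compact support of $\dbar\chi$.
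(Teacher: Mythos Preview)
Your proposal is correct and follows essentially the same route as the paper: differentiate~\eqref{equ:Pi_C} under the integral, split $\h\dbar_x$ into the $\dbar_x a_{\mathcal{C}}$ and $\dbar_x\chi$ contributions, and use~\eqref{equ:dbar-a_C} for the first and the off-antidiagonal support together with~\eqref{equ:brg-bon-contour} for the second to get an exponential gain (the paper then packages this as $\h\dbar\Pi_{\mathcal{C},\chi}\equiv 0$ via Proposition~\ref{prop:negligible}, while you invoke Proposition~\ref{prop:phi1} directly). One small slip: to combine the two pieces you need $\Phi_1 \geq \max(\Phi_1',\Phi_1'')$ (so that $L^2_{\Phi_1',\loc}$ and $L^2_{\Phi_1'',\loc}$ both embed into $L^2_{\Phi_1,\loc}$), not $\Phi_1\leq\min(\Phi_1',\Phi_1'')$.
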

\begin{proof}
  Here the notation $\O(1)$ is used similarly to
  Definition~\ref{defi:unif-cont}.  Let $\Omega_2\Subset\Omega$. Since
  $\Pi_{\mathcal{C}, \chi}$ is properly supported, there exists
  $\Omega_1\Subset\Omega$ such that the support of the distribution
  kernel of $\Pi_{\mathcal{C}, \chi} 1_{\Omega_2}$ is contained in
  $\Omega_1\times \Omega_2$. Applying the $\dbar$ operator
  on~\eqref{equ:Pi_C}, we get the sum of two terms: one involving
  $\dbar_x a_{\mathcal{C}}(x,\bar y)$, which we estimate uniformly on
  $\Omega_1\times \Omega_2$ by~\eqref{equ:dbar-a_C}, and another term
  involving $\dbar_x \chi(x,\bar y)$. Since $\dbar_x \chi(x,\bar y)$
  is supported away from the anti-diagonal, this last term can be
  uniformly estimated as well by the good contour
  property~\eqref{equ:brg-bon-contour}. This finally gives
  \[
  \norm{\h\dbar \Pi_{\mathcal{C}, \chi} u}_{\lphi(\Omega_2)} \leq
  Ce^{-1/C\h} \norm{u}_{\lphi(\Omega_1)}.
  \]
  In other words, $\h\dbar \Pi_{\mathcal{C}, \chi} \equiv_\Phi 0$
  (Proposition~\ref{prop:negligible}). Hence there exists
  $\Phi_1<\Phi$ such that
  \[
  \h\dbar \Pi_{\mathcal{C}, \chi} = \mathcal{O}(1): L_{
    \Phi,\loc}^{2}(\Omega) \to L^2_{\Phi_1}(\Omega)\,.
  \]
  By Proposition~\ref{prop:approx_proj}, the operator
  $\Pi_{\mathcal{C}, \chi}$ is uniformly continuous:
  $\lphi(\Omega)\to \lphi(\Omega)$, which finishes the proof.
\end{proof}

We next turn to the reproducing property: if $u$ is holomorphic, or
exponentially close to holomorphic, then $\Pi_{\mathcal{C}, \chi} u$
must be exponentially close to $u$. We first deal with the case of a
holomorphic $u$.
\begin{lemm}
  \label{lemm:pi_C-reproduisant}
  There exists $\Phi_2\in\mathscr{C}^\infty(\Omega;\RM)$ with
  $\Phi_2<\Phi$ such that for all $\Omega_2\Subset \Omega$, there
  exists $\Omega_1\Subset\Omega$ and $C>0$, independent of $\h$, with
  \begin{equation}
    \forall u\in H_\Phi^\loc(\Omega), \quad 
    \norm{\Pi_{\mathcal{C}, \chi} u - u}_{L^2_{\Phi_2}(\Omega_2)} 
    \leq C \norm{u}_{L^2_\Phi(\Omega_1)}.
  \end{equation}
\end{lemm}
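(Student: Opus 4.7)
The strategy is to compare the global operator $\Pi_{\mathcal{C},\chi}$ with the local approximate projections $\Pi_{x_0}$ from Proposition~\ref{prop:pi_x0}, for which the reproducing property is already established, and then to glue local estimates into a global one by a finite covering argument.

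First, I would fix $x_0\in\Omega$ and let $\Pi_{x_0}=\Opbrg_r(a_\h)$ be the local operator constructed in Proposition~\ref{prop:pi_x0}. By the uniqueness statement in Theorem~\ref{theom}, the formal classical analytic symbol associated with $a_\h$ coincides with the germ at $(x_0,\bar x_0)$ of the global formal symbol $\hat a_\h$ that was used to define $a_{\mathcal{C}}$. Consequently, on a small ball $B(x_0,r)\times\overline{B(x_0,r)}$ contained in the set where $\mathcal{C}\h$ is small (so that the truncation $\theta(j\h\mathcal{C})$ is identically $1$ for $j\leq 1/(2\h\mathcal{C})$), the estimate~\eqref{equ:ac-a} gives $a_{\mathcal{C}}(x,\bar y)-a_\h(x,\bar y)=\O(e^{-1/\hat C\h})$ uniformly.

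Second, I would compare the kernels of $\Pi_{\mathcal{C},\chi}$ and $\Pi_{x_0}$ on such a small neighbourhood. Splitting the $y$-integral in~\eqref{equ:Pi_C} according to whether $|x-y|<r$ or $|x-y|\geq r$, the ``far'' part integrates against a cut-off factor (either $\chi(x,\bar y)$ times the exponential or the complementary region of $\Opbrg_r$) and is exponentially negligible in $\lphi$ by the good-contour estimate~\eqref{equ:brg-bon-contour}. On the ``near'' part, $\chi\equiv 1$ and the two symbols differ by $\O(e^{-1/\hat C\h})$, so Schur-type bounds analogous to those used in the proof of Proposition~\ref{prop:pi_x0} give
\[
\norm{(\Pi_{\mathcal{C},\chi}-\Pi_{x_0})u}_{\lphi(B(x_0,r'))}\leq C e^{-1/C\h}\norm{u}_{\lphi(\Omega_1(x_0))}
\]
for a suitable smaller ball and a neighbourhood $\Omega_1(x_0)\Subset\Omega$. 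Combined with Proposition~\ref{prop:pi_x0}\eqref{item:pi_x0:repro}, this yields a local continuous function $\Phi_2^{(x_0)}<\Phi$ defined near $x_0$ such that, for every $u\in H_\Phi^\loc(\Omega)$,
\[
\norm{\Pi_{\mathcal{C},\chi}u-u}_{L^2_{\Phi_2^{(x_0)}}(B(x_0,r'))}\leq C\norm{u}_{\lphi(\Omega_1(x_0))}.
\]

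Third, I would globalize. Given $\Omega_2\Subset\Omega$, cover $\overline{\Omega_2}$ by finitely many balls $B(x_0^{(i)},r'_i)$ as above. Using the finite family of local defects $\Phi-\Phi_2^{(x_0^{(i)})}>0$, construct as in the proof of Proposition~\ref{prop:phi1} a continuous function $\Phi_2$ on all of $\Omega$ with $\Phi_2<\Phi$ and $\Phi_2\leq\Phi_2^{(x_0^{(i)})}$ on each ball of the cover; one may smooth $\Phi_2$ by a standard convolution argument to obtain $\Phi_2\in\mathscr{C}^\infty(\Omega;\RM)$. Taking $\Omega_1$ to be a bounded open set containing the union of the $\Omega_1(x_0^{(i)})$, the local estimates sum up to the required global one.

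The main subtlety will be in the second step: verifying that the ``near'' comparison of the kernels gives a truly exponentially small contribution uniformly in $\h$, which requires that the asymptotic expansion of $a_{\mathcal{C}}$ at any scale $j\leq 1/(2\h\mathcal{C})$ is identical to that of the local $a_\h$, this being a direct consequence of the uniqueness in Theorem~\ref{theom} applied point by point. The covering and gluing in the third step are then standard.
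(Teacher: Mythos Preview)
Your approach is essentially the same as the paper's: compare $\Pi_{\mathcal{C},\chi}$ locally with $\Pi_{x_0}$ via~\eqref{equ:ac-a}, invoke the reproducing property of Proposition~\ref{prop:pi_x0}\eqref{item:pi_x0:repro}, and globalize by a partition-of-unity argument as in Proposition~\ref{prop:phi1}. The paper compresses your second step into the single assertion $1_{\Omega_0}\Pi_{\mathcal{C},\chi}1_{\Omega_0}\equiv 1_{\Omega_0}\Pi_{x_0}1_{\Omega_0}$, but your near/far splitting of the kernel is exactly what that assertion unpacks to.

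One quantifier issue in your third step: you fix $\Omega_2\Subset\Omega$ \emph{first} and then build $\Phi_2$ from a finite cover of $\overline{\Omega_2}$, so the $\Phi_2$ you produce depends on $\Omega_2$. The lemma, however, requires a single $\Phi_2<\Phi$ on all of $\Omega$ that works for every $\Omega_2\Subset\Omega$. The fix is standard and is what the paper intends by ``as in the proof of Proposition~\ref{prop:phi1}'': cover $\Omega$ by a \emph{locally finite} family of balls $B(x_0^{(i)},r'_i)$, obtain the local defects $\epsilon_i=\inf_{B(x_0^{(i)},r'_i)}(\Phi-\Phi_2^{(x_0^{(i)})})>0$, and set $\Phi_2=\Phi-\sum_i\tilde\epsilon_i\chi_i$ with a subordinate partition of unity and the $\tilde\epsilon_i$ chosen as in~\eqref{equ:epsilon}. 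Then for any $\Omega_2\Subset\Omega$ only finitely many balls meet $\Omega_2$, and you take $\Omega_1$ to contain the corresponding $\Omega_1(x_0^{(i)})$.
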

\begin{proof}
  Around any $x_0$ there is a ball $\Omega_0$ such that
  $1_{\Omega_0}\Pi_{\mathcal{C}, \chi}1_{\Omega_0} \equiv_\Phi
  1_{\Omega_0}\Pi_{x_0}1_{\Omega_0}$
  (see~\eqref{equ:ac-a}) where $\Pi_{x_0}$ is as in
  Proposition~\ref{prop:pi_x0}. By Item~\ref{item:pi_x0:repro} of that
  proposition, we have, for any $\Omega_2\Subset \Omega_0$,
  \begin{equation}
    \forall u\in H_\Phi(\Omega_0), \quad 
    \norm{ \Pi_{\mathcal{C}, \chi} u - u}_{L^2_{\Phi_2}(\Omega_2)} \leq C \norm{u}_{\lphi(\Omega_0)}\,.
  \end{equation}
  We may conclude by a partition of unity argument, as in the proof of
  Proposition~\ref{prop:phi1}.
\end{proof}
\begin{prop}
  If $\Phi_1\in\mathscr{C}^0(\Omega;\RM)$ satisfies $\Phi_1<\Phi$,
  then there exists $\Phi_2\in\mathscr{C}^0(\Omega;\RM)$ with
  $\Phi_2<\Phi$ such that
  \[
  \Pi_{\mathcal{C}, \chi} - 1 = \mathcal{O}(1):
  H^\loc_{\Phi,\Phi_1}(\Omega) \to L_{ \Phi_2,\loc}^{2}(\Omega).
  \]
\end{prop}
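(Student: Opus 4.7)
The plan is to approximate any $u\in H^\loc_{\Phi,\Phi_1}(\Omega)$ by a holomorphic function $v$ such that $w:=u-v$ is exponentially small in $L^2_\Phi$, then invoke Lemma~\ref{lemm:pi_C-reproduisant} on $v$ and the uniform continuity of $\Pi_{\mathcal{C},\chi}$ on $w$.

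Fix $\Omega_2\Subset\Omega$. By the uniform proper support of $\Pi_{\mathcal{C},\chi}$, I choose an open $\Omega_1\Subset\Omega$ containing $\Omega_2$ such that $(\Pi_{\mathcal{C},\chi}u)|_{\Omega_2}$ depends only on $u|_{\Omega_1}$. Covering $\overline{\Omega_1}$ by finitely many small balls on which $\Phi$ is strictly plurisubharmonic, Hörmander's $L^2$-estimates for $\dbar$, applied on each ball with weight $\Phi$ and glued with a partition of unity, produce $w$ with $\dbar w=\dbar u$ on $\Omega_1$ and
\[
\|w\|_{L^2_\Phi(\Omega_1)} \leq C\sqrt{\h}\,\|\dbar u\|_{L^2_\Phi(\Omega_1)}.
\]
Since $\Phi_1<\Phi$ on the compact set $\overline{\Omega_1}$, there exists $c>0$ with $\Phi\geq\Phi_1+c$ on $\Omega_1$, hence $\|\h\dbar u\|_{L^2_\Phi(\Omega_1)} \leq e^{-c/\h}\|\h\dbar u\|_{L^2_{\Phi_1}(\Omega_1)}$, which combined with the Hörmander estimate yields
\[
\|w\|_{L^2_\Phi(\Omega_1)} \leq C\h^{-1/2}e^{-c/\h}\,\|\h\dbar u\|_{L^2_{\Phi_1}(\Omega_1)}.
\]

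Setting $v:=u-w\in\Hol(\Omega_1)\cap L^2_\Phi(\Omega_1)$, I then invoke the reproducing estimate of Lemma~\ref{lemm:pi_C-reproduisant}: its proof is local in nature (reducing via comparison with $\Pi_{x_0}$ and a partition of unity to Item~\ref{item:pi_x0:repro} of Proposition~\ref{prop:pi_x0}), so it applies to $v$ on $\Omega_1$ and furnishes some $\Phi_3<\Phi$ with
\[
\|\Pi_{\mathcal{C},\chi}v - v\|_{L^2_{\Phi_3}(\Omega_2)} \leq C\|v\|_{L^2_\Phi(\Omega_1)}.
\]
For the complementary term, the uniform continuity of $\Pi_{\mathcal{C},\chi}:L^2_{\Phi,\loc}\to L^2_{\Phi,\loc}$ combined with the exponential smallness of $w$ in $L^2_\Phi$, together with the weight-interpolation argument in the proof of Proposition~\ref{prop:phi1}, gives some $\Phi_4<\Phi$ with $\|\Pi_{\mathcal{C},\chi}w - w\|_{L^2_{\Phi_4}(\Omega_2)} \leq C\|\h\dbar u\|_{L^2_{\Phi_1}(\Omega_1)}$. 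Taking $\Phi_2:=\min(\Phi_3,\Phi_4)<\Phi$ and summing the two estimates yields the required bound.

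The main technical point is the Hörmander step and the careful bookkeeping of the exponential gap between $\Phi_1$ and $\Phi$: this gap must compensate both the $\h^{-1}$ factor arising from rewriting $\dbar u$ in terms of $\h\dbar u$ and the subsequent loss incurred when passing from $L^2_\Phi$ down to the final weight $L^2_{\Phi_2}$. The strict plurisubharmonicity of $\Phi$, assumed throughout the section, is precisely what enables both the Hörmander estimate and this bookkeeping to go through; once it is in place, the rest of the argument is a routine combination of previously established facts.
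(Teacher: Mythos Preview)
Your overall strategy---solve a $\dbar$ problem to split $u$ into a holomorphic piece plus an exponentially small remainder, then invoke Lemma~\ref{lemm:pi_C-reproduisant} on the holomorphic piece---is exactly the paper's. But the sentence ``Hörmander's $L^2$-estimates for $\dbar$, applied on each ball with weight $\Phi$ and glued with a partition of unity, produce $w$ with $\dbar w=\dbar u$ on $\Omega_1$'' hides a genuine gap. Solutions of $\dbar$ do not glue: if $\dbar w_j=\dbar u$ on $B_j$ and $\{\chi_j\}$ is a partition of unity, then $\dbar\big(\sum_j\chi_j w_j\big)=\dbar u+\sum_j(\dbar\chi_j)w_j$, and the extra term is neither zero nor exponentially small. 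The alternative of solving $\dbar w_j=\chi_j\dbar u$ on each $B_j$ fails for $n\ge 2$ because $\chi_j\dbar u$ is not $\dbar$-closed. Nor can you simply solve on $\Omega_1$ directly, since an arbitrary relatively compact $\Omega_1\Subset\Omega$ need not be pseudoconvex.

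The paper handles this by working locally near each $z_0\in\Omega$ and first \emph{extending} $\Phi$ to a globally strictly plurisubharmonic function $\Phi_0$ on all of $\CM^n$ (patching with its quadratic Taylor polynomial outside a small ball). One then cuts off $u$ by $\tilde\chi\in C_0^\infty$ and solves $\h\dbar w=\h\dbar(\tilde\chi u)$ \emph{globally} on $\CM^n$ via Appendix~\ref{app:dbar}. The price is a boundary term $u\,\h\dbar\tilde\chi$ which is \emph{not} exponentially small in $L^2_\Phi$; the paper absorbs it by running the $\dbar$ estimate with a perturbed weight $\Phi_\delta$ that equals $\Phi$ on $\spt(\dbar\tilde\chi)$ but is strictly smaller on an inner neighborhood $W$ of $z_0$. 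The resulting $w$ is then controlled in $L^2_{\Phi_\delta}$, and a further pseudolocality argument (item~\ref{item:localization} of Proposition~\ref{prop:approx_proj}) disposes of $(1-\tilde\chi)u$. This weight engineering and the global-on-$\CM^n$ solution of $\dbar$ are the substantive ingredients missing from your sketch.
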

\begin{proof}
                                  
  Let $z_0\in \Omega $.
  \begin{lemm}
    \label{lemm:pf1}
    $\exists$ an open neighborhood $V\Subset \Omega $ of $z_0$ and
    $\Phi _0\in C^\infty (\CM^n;\RM)$ such that:
    \begin{equation}\label{pf-4.17.1}
      \Phi _0=\Phi \hbox{ in }V,
    \end{equation}
    \begin{equation}\label{pf-4.17.2}
      \nabla ^\alpha \Phi _0={\cal O}(1)\hbox{ on }\CM^n\hbox{ when
      }|\alpha |\ge 2,
    \end{equation}
    \begin{equation}\label{pf-4.17.3}
      \exists\, C>0\hbox{ such that }\partial _{\bar z}\partial _{z}\Phi
      _0\ge 1/C.
    \end{equation}
  \end{lemm}
  \begin{proof}
    Let $\Phi ^{(2)}$ be the Taylor polynomial of order 2 of $\Phi $
    at $z_0$, so that
    \begin{equation}\label{pf-4.17.4}
      \nabla ^\alpha (\Phi -\Phi ^{(2)})={\cal O}(|z-z_0|^{3-|\alpha |}),\
      |z-z_0|\hbox{ small },
    \end{equation}
    for $0\le |\alpha |\le 2$. Let
    $\chi \in C_0^\infty (B_{\CM^n}(0,1);[0,1])$ be equal to 1 on
    $B_{\CM^n}(0,1/2)$ and consider for $0<\epsilon \ll 1$:
    \begin{equation}\label{pf-4.17.5}
      \Phi _0(z)=\Phi _{0,\epsilon }(z)=\Phi ^{(2)}(z)+\chi (|z-z_0|/\epsilon
      )(\Phi -\Phi ^{(2)})(z).
    \end{equation}
    Then,
    \begin{equation}\label{pf-4.17.6}
      \Phi _0(z)=\begin{cases}
        \Phi (z)\hbox{ in }B(z_0,\epsilon /2),\\
        \Phi ^{(2)}(z)\hbox{ in }\CM^n\setminus B(z_0,\epsilon ),
      \end{cases}
    \end{equation}
    \begin{equation}\label{pf-4.17.7}
      \nabla ^\alpha \Phi _0=\nabla ^\alpha \Phi ^{(2)}+{\cal
        O}(|z-z_0|^{3-|\alpha |}),\ |\alpha |\le 2,
    \end{equation}
    and in particular
    \begin{equation}\label{pf-4.17.8}
      \partial _z\partial _{\bar{z}}\Phi _0=\partial _z\partial
      _{\bar{z}}\Phi ^{(2)}+{\cal O}(\epsilon )\ge 1/{\cal O}(1),
    \end{equation}
    when $\epsilon >0$ is small enough. The lemma follows with
    $V=B(x_0,\epsilon /2)$ for some $0<\epsilon \ll 1$.
  \end{proof}
  Let $W\Subset V$ be an open neighborhood of $z_0$ with smooth
  boundary and let $\chi _W\in C_0^\infty (V;[0,1])$ satisfy:
  \begin{equation}\label{pf-4.17.9}
    \chi _W>0 \hbox{ in }W,\ \mathrm{supp\,}\chi _{W}\subset \overline{W}.
  \end{equation}
  Then for $\delta >0$ small enough, the function
  $\Phi _\delta =\Phi _0-\delta \chi _W$ satisfies (\ref{pf-4.17.2}),
  (\ref{pf-4.17.3}) and
  \begin{equation}\label{pf-4.17.10}
    \Phi _\delta =\Phi \hbox{ in }V\setminus W,
  \end{equation}
  \begin{equation}\label{pf-4.17.11}
    \Phi -\delta \le \Phi _\delta <\Phi \hbox{ in }W.
  \end{equation}
  We choose $\delta >0$ small enough so that
  \begin{equation}\label{pf-4.17.12}
    \Phi _\delta >\Phi _1 \hbox{ in }\overline{V}.
  \end{equation}

  \par Let $\widetilde{\chi }\in C_0^\infty (V;[0,1])$ be equal to 1
  on $W$ and write
  \begin{equation}\label{pf-4.17.13}
    u=\widetilde{\chi }u+(1-\widetilde{\chi })u,\ u\in
    H^{\mathrm{loc}}_{\Phi ,\Phi _1}(\Omega ).
  \end{equation}
  Then
  \begin{equation}\label{pf-4.17.14}
    \h \overline{\partial }(\widetilde{\chi }u)=u\h \overline{\partial
    }\widetilde{\chi }+\widetilde{\chi }\h \overline{\partial }u\in
    L^2_{\Phi _\delta },
  \end{equation}
  and
  \begin{equation}\label{pf-4.17.15}
    \| \h \overline{\partial } (\widetilde{\chi }u)\|_{L^2_{\Phi _\delta
      }(\CM^n)}\le {\cal O}(1)\left( \h \| u\|_{L^2_\Phi (V)}+\|
      \h \overline{\partial }u \|_{L^2_{\Phi _1}(V)} \right).
  \end{equation}
  Moreover $\h \overline{\partial }(\widetilde{\chi }u)$ is
  $\overline{\partial }$-closed, so we can apply
  Appendix~\ref{app:dbar} (cf.\ (\ref{dbar.12}), (\ref{dbar.13})), to
  find $w\in L^2_{\Phi _\delta }$ such that
  \begin{equation}\label{pf-4.17.16}
    \h \overline{\partial }w=\h \overline{\partial }(\widetilde{\chi }u),
  \end{equation}
  \begin{equation}\label{pf-4.17.17}
    \| w\|_{L^2_{\Phi _\delta }(\CM^n)}\le {\cal O}(\h ^{-1/2})\left( \h \| u\|_{L^2_\Phi (V)}+\|
      \h \overline{\partial }u \|_{L^2_{\Phi _1}(V)} \right)
  \end{equation}

  \par Write
  \begin{equation}\label{pf-4.17.18}
    u=(\widetilde{\chi }u-w)+(1-\widetilde{\chi })u+w.
  \end{equation}
  Here $\widetilde{\chi }u-w\in H_\Phi (V)$,
  \[
  \| \widetilde{\chi }u-w\|_{H_\Phi (V)}\le {\cal O}(\h ^{-1/2})\left(
    \h ^{1/2}\| u\|_{L^2_\Phi (V)}+\| \h \overline{\partial }u
    \|_{L^2_{\Phi _1}(V)} \right) .
  \]
  We may assume without loss of generality (see also a comment below)
  that $\mathrm{supp\,}\chi $ is contained in a sufficiently small
  neighborhood of the diagonal, so that the restriction of
  $\Pi _{C,\chi }u$ to $W$ only depends on ${{u}_\vert}_{V}$. By
  Lemma~\ref{lemm:pi_C-reproduisant}, we get with $\delta >0$ small
  enough,
  \begin{equation}\label{pf-4.17.19}
    \| (\Pi _{C,\chi }-1)(\widetilde{\chi }u-w)\|_{L^2_{\Phi
        _\delta     }(W)}
    \le {\cal O}(\h ^{-1/2})\left( \h ^{1/2}\|
      u\|_{L^2_\Phi (V)}+\| \h \overline{\partial }u \|_{L^2_{\Phi _1}(V)}
    \right) .
  \end{equation}

  Let $\widetilde{W}\Subset W$ be another neighborhood of $z_0$ with
  smooth boundary and let $\widetilde{\Phi }_\delta \ge \Phi _\delta $
  be a new function with the same properties as $\Phi _\delta $ after
  replacing $W$ with $\widetilde{W}$. Then (\ref{pf-4.17.19}) still
  holds after replacing $L^2_{\Phi _\delta }(W)$ with
  $L^2_{\widetilde{\Phi } _\delta }(\widetilde{W})$. From
  (\ref{pf-4.17.17}) we get
  \begin{equation}\label{pf-4.17.20}
    \| (\Pi _{C,\chi }-1)w\|_{L^2_{\widetilde{\Phi }_\delta
      }(\widetilde{W})}\le {\cal O}(\h ^{-1/2})\left( \h \|
      u\|_{L^2_\Phi (V)}+\| \h \overline{\partial }u \|_{L^2_{\Phi _1}(V)}
    \right) 
  \end{equation}
  when $\widetilde{\Phi }_\delta $ is close enough to $\Phi $ but
  still $<\Phi $ in $\widetilde{W}$.

  Since $\Pi _{C,\chi }$ enjoys the pseudolocal property
  (item~\ref{item:localization} of Proposition~\ref{prop:approx_proj})
  we get the same estimate for
  $(\Pi _{C,\chi }-1)(1-\widetilde{\chi })u$.

  \par Thus we have found a continuous function
  $\widetilde{\Phi }_\delta \le \Phi $ in $V$ with
  $\widetilde{\Phi }_\delta <\Phi $ in $\widetilde{W}$, such that
  \begin{equation}\label{pf-4.17.21}
    \| (\Pi _{C,\chi }-1)u\|_{L^2_{\widetilde{\Phi }_\delta
      }(\widetilde{W})}\le {\cal O}(\h ^{-1/2})\left( \h ^{1/2}\|
      u\|_{L^2_\Phi (V)}+\| \h \overline{\partial }u \|_{L^2_{\Phi _1}(V)}
    \right) .
  \end{equation}
  After a slight shrinking of $\widetilde{W}$ and increase of
  $\widetilde{\Phi }_\delta $ we can eliminate the factor
  ${\cal O}(\h ^{-1/2})$.

  Without the shrinking of the support of $\chi $, we get the same
  estimate after replacing $V$ with some larger domain
  $\Subset \Omega $. Varying $z_0$, we get the proposition by means of
  a partition of unity.
\end{proof}

\subsection{Uniqueness of the approximate Bergman projection}
\label{sec:uniq-appr-bergm}

In the previous paragraphs, we have constructed an operator
$\Pi_0=\Pi_{\mathcal{C}, \chi}$ with the following properties:

\begin{enumerate}
\item \label{item:prop-1} $\Pi_0$ is uniformly continuous:
  $ L_{ \Phi,\loc}^{2}(\Omega) \to H^\loc_{\Phi,\Phi_1}(\Omega)$ for
  some $\Phi_1\in\mathscr{C}^0(\Omega;\RM)$ with $\Phi_1<\Phi$.
\item $\Pi_0$ is uniformly properly supported.
\item \label{item:prop-3} $\Pi_0\equiv_\Phi \Pi_0^*$ (see
  Definition~\ref{defi:negligible}).
\item \label{item:prop-4} If $\Phi_1\in\mathscr{C}^0(\Omega;\RM)$
  satisfies $\Phi_1<\Phi$, then there exists
  $\Phi_2\in\mathscr{C}^0(\Omega;\RM)$ with $\Phi_2<\Phi$ such that
  \[
  \Pi_0 - 1 = \mathcal{O}(1): H^\loc_{\Phi,\Phi_1}(\Omega) \to L_{
    \Phi_2,\loc}^{2}(\Omega).
  \]
\end{enumerate}

\begin{prop}
  \label{prop:uniqueness}
  Assume that $\Pi_0$ and $\tilde\Pi$ satisfy
  \ref{item:prop-1}--\ref{item:prop-4}. Then $\tilde\Pi\equiv_\Phi\Pi_0$.
\end{prop}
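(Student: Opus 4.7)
The natural strategy is the standard ``sandwich'' argument: I would write
\[
\tilde\Pi - \Pi_0 = (\tilde\Pi-1)\Pi_0 + \tilde\Pi(1 - \Pi_0),
\]
and show that each of the two terms on the right is negligible. The point is that each summand pairs a projector with a factor annihilating near-holomorphic functions, so the compositions should kill everything. Throughout the argument I will use Proposition~\ref{prop:negligible} (the equivalent characterizations of negligibility), Proposition~\ref{prop:phi1} (uniform continuity can be shifted between weights), and Corollary~\ref{coro:adjoint} (negligibility is preserved under adjoints for uniformly properly supported, uniformly continuous operators).

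For the first term, property~\ref{item:prop-1} applied to $\Pi_0$ gives a $\Phi_1<\Phi$ such that $\Pi_0 = \mathcal O(1): L^2_{\Phi,\loc}(\Omega)\to H^\loc_{\Phi,\Phi_1}(\Omega)$. Property~\ref{item:prop-4} applied to $\tilde\Pi$, with this $\Phi_1$, yields a $\Phi_2<\Phi$ such that $\tilde\Pi - 1 = \mathcal O(1): H^\loc_{\Phi,\Phi_1}(\Omega)\to L^2_{\Phi_2,\loc}(\Omega)$. Composing, $(\tilde\Pi - 1)\Pi_0 = \mathcal O(1): L^2_{\Phi,\loc}(\Omega)\to L^2_{\Phi_2,\loc}(\Omega)$, which is exactly the definition of negligibility (item~\ref{item:negl-3} of Proposition~\ref{prop:negligible}). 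Both factors are uniformly properly supported, hence so is the composition, as required.

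For the second term, I pass to adjoints. Since both $\tilde\Pi$ and $1 - \Pi_0$ are uniformly continuous and uniformly properly supported, so is $\tilde\Pi(1-\Pi_0)$, and by Corollary~\ref{coro:adjoint} it is negligible iff $(1-\Pi_0^*)\tilde\Pi^*$ is. Using property~\ref{item:prop-3} for both operators, $\Pi_0^* = \Pi_0 + R_0$ and $\tilde\Pi^* = \tilde\Pi + \tilde R$ with $R_0, \tilde R \equiv 0$, so modulo negligible operators $(1-\Pi_0^*)\tilde\Pi^*$ reduces to $(1-\Pi_0)\tilde\Pi$. Then I apply the same mechanism as in the first term, but with the roles exchanged: property~\ref{item:prop-1} for $\tilde\Pi$ lands in some $H^\loc_{\Phi,\tilde\Phi_1}(\Omega)$, and property~\ref{item:prop-4} for $\Pi_0$ (with that $\tilde\Phi_1$) makes $1 - \Pi_0$ negligible from there into some $L^2_{\tilde\Phi_2,\loc}(\Omega)$ with $\tilde\Phi_2<\Phi$.

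The main bookkeeping obstacle is justifying the ``modulo negligible'' reductions in the adjoint step: when I write $\tilde\Pi^* = \tilde\Pi + \tilde R$ and multiply by $1-\Pi_0$ on the left, I need $(1-\Pi_0)\tilde R$ to remain negligible. Here $\tilde R$ maps $L^2_{\Phi,\loc}$ into some $L^2_{\Phi_3,\loc}$ with $\Phi_3<\Phi$, and I must push $1-\Pi_0$ through this weaker weight. This is exactly where Proposition~\ref{prop:phi1} is used: uniform continuity of $\Pi_0$ from $L^2_{\Phi,\loc}$ to $L^2_{\Phi,\loc}$ upgrades to a corresponding bound between slightly shifted weights, and the strict inequality $\Phi_3<\Phi$ is preserved after shifting. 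A symmetric argument handles $R_0\tilde\Pi$. Once these routine (but necessary) weight-bookkeeping checks are made, both summands are negligible, whence $\tilde\Pi \equiv \Pi_0$.
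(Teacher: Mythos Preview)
Your proof is correct and is essentially the same argument as the paper's, reorganized: the paper shows $(\tilde\Pi^*-1)\Pi_0\equiv 0$ and (by symmetry) $(\Pi_0^*-1)\tilde\Pi\equiv 0$, then takes adjoints and uses property~\ref{item:prop-3} to conclude $\Pi_0\equiv\Pi_0\tilde\Pi\equiv\tilde\Pi$, whereas you package the same two facts via the decomposition $\tilde\Pi-\Pi_0=(\tilde\Pi-1)\Pi_0+\tilde\Pi(1-\Pi_0)$. The weight-shifting bookkeeping you flag (Proposition~\ref{prop:phi1} to compose a negligible operator with a uniformly continuous one) is precisely the implicit step the paper also relies on; one small citation slip is that your first-term conclusion is Definition~\ref{defi:negligible} itself rather than item~\ref{item:negl-3} of Proposition~\ref{prop:negligible}.
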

\begin{proof}
  Using~\ref{item:prop-3} and~\ref{item:prop-4} for $\tilde{\Pi}$, we
  see that $\tilde\Pi^*$ satisfies~\ref{item:prop-4}. Since $\Pi_0$
  satisfies~\ref{item:prop-1}, we get $(\tilde\Pi^*-1)\Pi_0\equiv_\Phi 0$,
  \emph{i.e.}
  \begin{equation}
    \label{equ:pi-pi}
    \tilde\Pi^* \Pi_0 \equiv_\Phi \Pi_0.
  \end{equation}
  By Corollary~\ref{coro:adjoint}, we get
  $\Pi_0\equiv_\Phi \Pi_0^* \equiv_\Phi \Pi_0\tilde\Pi$. By~\eqref{equ:pi-pi}
  with $\Pi_0$ and $\tilde\Pi$ exchanged, we get
  $\Pi_0^* \tilde \Pi \equiv_\Phi \tilde\Pi$ and hence
  $\Pi_0\equiv_\Phi\tilde\Pi$ as claimed.
\end{proof}

\section{The Bergman projection on \texorpdfstring{$\CM^n$}{Cn}}
\label{sec:bcn}

Let $\Phi :\CM^n\to \RM$ satisfy
\begin{equation}\label{bcn.1}\begin{split} &\Phi \hbox{ has a
      holomorphic extension to a tubular}\\ &\hbox{neighborhood
    }T\hbox{ in } \widetilde\CM^n, \hbox{the complexification of }\CM^n \,.
  \end{split}
\end{equation}
We use `$\Phi $' also to denote the extension.  Also assume that
\begin{equation}\label{bcn.2}
  \nabla ^2\Phi \hbox{ is bounded in }T,
\end{equation}
\begin{equation}\label{bcn.3}
  \partial_{\bar{z}}\partial_z\Phi \geq 1/C\hbox{ on }\CM^n\hbox{,
    for some constant }C>0.
\end{equation}

\par Examining the proofs, we see that the formal analytic symbol
$\widehat{a}_\hbar (x,y)\sim \sum_j a_j(x,y)\hbar ^j$ in the proof of
Proposition~\ref{prop:pi_x0} is well defined in a tubular neighborhood
$\Omega _1$ of the antidiagonal, $\mathrm{adiag\,}(\CM^n)$ and
satisfies the estimates on~\eqref{equ:C(x,y)} with $C(x,y)=C$
independent of $(x,y)$. Correspondingly, we define $a_C$ simply by
\begin{equation}\label{bcn.4}
  a_C(x,y;\h)=\sum_{0\le j\le 1/(2C\h)}a_j(x,y)\h^j
\end{equation} 
in $\Omega _1$ and $a_C$ is holomorphic. We can define
$\Pi _{c,\chi }u$ as in~\eqref{equ:Pi_C} with $\chi $ of the form
$\chi (x-y)$, where $\chi \in C_0^\infty (\CM^n)$ is equal to 1 near 0
and with support in a small neighborhood of 0. Choosing $\chi $ real
and even; $\chi (-y)=\chi (y) $, we get
\begin{prop}
  \label{prop:bcn1}
  $\exists$ $C_1>0$ such that with $\Phi _1=\Phi -1/C_1$,
  \begin{enumerate}[label=\roman*)]
  \item \label{item:piC-1}
    $\Pi _{C,\chi } ={\cal O}(1):\, L^2_{\Phi }(\CM^n)\to L^2_{\Phi
    }(\CM^n)$ is selfadjoint,
  \item \label{item:piC-2}
    $\Pi _{C,\chi }={\cal O}(1):\, L^2_\Phi (\CM^n)\to H_{\Phi ,\Phi
      _1}(\CM^n)$,
  \item \label{item:piC-3}
    $\Pi _{C,\chi }-1={\cal O}(1):\, H_{\Phi }(\CM^n)\to L^2_{\Phi
      _1}(\CM^n)$.
  \end{enumerate}
\end{prop}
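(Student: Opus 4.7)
The strategy is to revisit the arguments of Section \ref{sec:appr-bergm-proj} (Propositions \ref{prop:approx_proj}, \ref{prop:pi_chi-holom} and Lemma \ref{lemm:pi_C-reproduisant}) and verify that, under the global assumptions \eqref{bcn.1}--\eqref{bcn.3}, every constant that appeared locally can be chosen uniformly in $x\in\CM^n$. The simplifications afforded by \eqref{bcn.4}, namely that $a_C$ is now holomorphic (no smooth cutoff on the summation index is needed) and that $\chi$ depends only on the difference $x-y$, make this uniformity essentially transparent.

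For \ref{item:piC-1}, selfadjointness goes exactly as in Proposition \ref{prop:approx_proj}, item \ref{item:pi_chi-selfadjoint}: the identity \eqref{equ:bar-psi}, the symmetry $\overline{a_j(x,\bar y)}=a_j(y,\bar x)$ inherited from Item \ref{item:pi_x0:selfadjoint} of Proposition \ref{prop:pi_x0} via the uniqueness in Theorem \ref{theom}, together with $\chi(-y)=\chi(y)$, give $k_\h(y,x)=\overline{k_\h(x,y)}\,e^{(2/\h)(\Phi(y)-\Phi(x))}$. For $L^2_\Phi$-boundedness I apply Schur's test to the weight-reduced kernel $\tilde k(x,y):=k_\h(x,y)\,e^{(\Phi(y)-\Phi(x))/\h}$: by \eqref{equ:brg-bon-contour}, with $m=1/C$ uniform from \eqref{bcn.3}, one has $|\tilde k(x,y)|\le C'\h^{-n}e^{-(m-\epsilon)|x-y|^2/\h}$, whose $L(\DD y)$- and $L(\DD x)$-integrals are $O(1)$ by a standard Gaussian computation.

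For \ref{item:piC-2}, since $a_C$ from \eqref{bcn.4} is holomorphic in the tubular neighborhood $\Omega_1$ of $\adiag(\CM^n)$, the only non-holomorphic dependence of $k_\h(x,y)$ on $x$ comes from $\chi(x-y)$, so $\h\dbar_x k_\h(x,y)$ is supported on $\{|x-y|\ge r_0\}$ for some $r_0>0$. On that set, \eqref{equ:brg-bon-contour} gives
\[
|\h\dbar_x k_\h(x,y)|\,e^{(\Phi(y)-\Phi(x))/\h}\le C\h^{1-n}e^{-(m-\epsilon)|x-y|^2/\h}.
\]
Choosing $C_1$ large enough that $(m-\epsilon)r_0^2/2>1/C_1$, the weight shift $e^{1/(C_1\h)}$ corresponding to $\Phi_1=\Phi-1/C_1$ is absorbed into half of the Gaussian factor; Schur's test then yields the desired uniform bound for $\h\dbar\Pi_{C,\chi}:L^2_\Phi(\CM^n)\to L^2_{\Phi_1}(\CM^n)$, and combined with \ref{item:piC-1} this proves \ref{item:piC-2}.

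For \ref{item:piC-3}, which is the main obstacle, I would argue by partition of unity. The local reproducing Lemma \ref{lemm:pi_C-reproduisant}, obtained by comparison with the model operator $\Pi_{x_0}$ of Proposition \ref{prop:pi_x0}, gives an estimate near each $x_0\in\CM^n$. The point is that \eqref{bcn.1}--\eqref{bcn.3} ensure that the radius of the ball $\Omega_0$ around $x_0$, the size of the tubular neighborhood of holomorphic extension, the plurisubharmonicity constant in \eqref{equ:spsh}, and the bound on $\nabla^2\Phi$ can all be taken independent of $x_0$. Combined with the fact that $\Pi_{C,\chi}$ and the local $\Pi_{x_0}$ agree modulo a negligible term (analogous to \eqref{equ:ac-a}, with the constant $\hat C$ now uniform), and choosing a locally finite covering of $\CM^n$ by such translated balls $\{B_j\}$ with uniformly bounded multiplicity, one obtains by summation
\[
\|(\Pi_{C,\chi}-1)u\|_{L^2_{\Phi_1}(\CM^n)}^2\le M\sum_j\|u\|_{L^2_\Phi(\tilde B_j)}^2\le M'\|u\|_{L^2_\Phi(\CM^n)}^2
\]
for a common choice of $\Phi_1=\Phi-1/C_1$, proving \ref{item:piC-3}. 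The hard part is thus the uniformity audit of Proposition \ref{prop:pi_x0}: one must verify that each constant there depends only on the uniform data of $\Phi$ controlled by \eqref{bcn.1}--\eqref{bcn.3}, and not on the particular point $x_0$.
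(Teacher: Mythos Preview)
Your proposal is correct and follows essentially the same approach as the paper: the paper does not write out a separate proof of this proposition, but simply presents it as a direct consequence of ``examining the proofs'' of Section~\ref{sec:appr-bergm-proj} under the uniform hypotheses \eqref{bcn.1}--\eqref{bcn.3}, together with the simplifications that $a_C$ is holomorphic and $\chi=\chi(x-y)$. Your write-up spells out precisely this uniformity audit (Schur's test with the Gaussian bound \eqref{equ:brg-bon-contour} for \ref{item:piC-1} and \ref{item:piC-2}, and a translation-invariant partition of unity using the local reproducing property for \ref{item:piC-3}), which is exactly what the paper intends the reader to supply.
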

Notice that~\ref{item:piC-2} amounts to~\ref{item:piC-1} and
\[
\h\overline{\partial }\Pi _{c,\chi }={\cal O}(e^{-1/(C_1\h)}):\,
L^2_\Phi \to L^2_\Phi ,
\]
where we omit to write out `$\CM^n$' when there is no risk of
confusion.

\par Since $\h\overline{\partial }\Pi _{C,\chi }u$ is
$\overline{\partial }$-closed for every $u\in L^2_\Phi $, we can
decompose:
\begin{equation}\label{bcn.5}
  \Pi _{C,\chi }=(\Pi _{C,\chi }-R)+R=:\widetilde{\Pi }+R,
\end{equation}
where
\begin{equation}\label{bcn.6}\begin{split}
    R=(\h\overline{\partial })^{\Phi ,*} (\Box^{(1)}_\Phi
    )^{-1}\h\overline{\partial }\Pi _{C,\chi }&={\cal
      O}(\h^{-1/2})e^{-1/(C_1\h)}:\, L^2_\Phi \to
    L^2_\Phi ,\\
    \widetilde{\Pi }&={\cal O}(1):\, L^2_\Phi \to H_\Phi .
  \end{split}
\end{equation}
Here the box operator is defined in Section \ref{app:dbar} and as
there we let the exponent $(\Phi ,*)$ indicate that we take adjoints
in the $L_\Phi ^2$-spaces of scalar or form-valued functions.

\par Let $\Pi $ be the orthogonal projection in $L^2_\Phi (\CM^n) $
onto $H_\Phi (\CM^n)$.
\begin{theo}
  \label{theo:bcn2}
  We have
  \begin{equation}\label{bcn.7}
    \Pi -\Pi _{C,\chi }={\cal O}(\h^{-1/2})e^{-1/(C_1\h)}):\, L^2_\Phi \to
    L^2_\Phi .
  \end{equation}
\end{theo}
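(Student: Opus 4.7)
The plan is to exploit the decomposition $\Pi_{C,\chi}=\widetilde{\Pi}+R$ from~\eqref{bcn.5}--\eqref{bcn.6} together with the self-adjointness of both $\Pi$ and $\Pi_{C,\chi}$ on $L^2_\Phi(\CM^n)$. The algebraic starting point is the identity
\begin{equation*}
\Pi-\Pi_{C,\chi}=\Pi(1-\Pi_{C,\chi})+(\Pi-1)\Pi_{C,\chi},
\end{equation*}
and I would estimate each of the two terms separately in $\mathcal{L}(L^2_\Phi,L^2_\Phi)$. No further microlocal input beyond Proposition~\ref{prop:bcn1} should be needed.

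For the second term, I would insert $\Pi_{C,\chi}=\widetilde{\Pi}+R$. Since $\widetilde{\Pi}$ maps $L^2_\Phi(\CM^n)$ into $H_\Phi(\CM^n)=\ker(1-\Pi)$, the piece $(\Pi-1)\widetilde{\Pi}$ vanishes identically, leaving only $(\Pi-1)R$. Because $\Pi$ is an orthogonal projection, $\|(\Pi-1)R\|\le 2\|R\|=\mathcal{O}(\h^{-1/2})e^{-1/(C_1\h)}$, which is already the target bound.

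For the first term, I would take adjoints and estimate $(1-\Pi_{C,\chi})\Pi$ instead, which is legitimate because both operators are self-adjoint. For any $u\in L^2_\Phi$, the element $\Pi u$ lies in $H_\Phi(\CM^n)$, so property~\ref{item:piC-3} of Proposition~\ref{prop:bcn1} yields
\begin{equation*}
\|(1-\Pi_{C,\chi})\Pi u\|_{L^2_{\Phi_1}}\le C\|\Pi u\|_{L^2_\Phi}\le C\|u\|_{L^2_\Phi}.
\end{equation*}
Because $\Phi_1=\Phi-1/C_1$, the pointwise identity $e^{-2\Phi/\h}=e^{-2/(C_1\h)}e^{-2\Phi_1/\h}$ gives $\|v\|_{L^2_\Phi}=e^{-1/(C_1\h)}\|v\|_{L^2_{\Phi_1}}$ for any $v$, hence $\|(1-\Pi_{C,\chi})\Pi u\|_{L^2_\Phi}\le Ce^{-1/(C_1\h)}\|u\|_{L^2_\Phi}$. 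Taking adjoints returns $\|\Pi(1-\Pi_{C,\chi})\|=\mathcal{O}(e^{-1/(C_1\h)})$.

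Summing the two estimates produces~\eqref{bcn.7}. I do not expect any real obstacle: the entire argument is a short functional-analytic manipulation once Proposition~\ref{prop:bcn1} and the splitting~\eqref{bcn.5} are in hand. The loss $\h^{-1/2}$ in the final bound is inherited from the $\dbar$-correction $R$, and its presence traces back to the Hörmander $L^2$ estimate for $\dbar$ invoked in~\eqref{bcn.6}; it is not an artifact of this argument and cannot be removed by it.
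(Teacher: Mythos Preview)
Your argument is correct and is essentially the paper's own proof, reorganized around the algebraic identity $\Pi-\Pi_{C,\chi}=\Pi(1-\Pi_{C,\chi})+(\Pi-1)\Pi_{C,\chi}$. The paper instead chains through $\Pi\Pi_{C,\chi}$ and its adjoint, but the two key estimates---$(\Pi-1)\Pi_{C,\chi}=(\Pi-1)R$ via the splitting~\eqref{bcn.5}--\eqref{bcn.6}, and $(1-\Pi_{C,\chi})\Pi=\mathcal{O}(e^{-1/(C_1\h)})$ via item~\ref{item:piC-3}---are exactly the same in both, and self-adjointness is used in the same way to pass between an operator and its adjoint.
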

\begin{proof}
  We have
  \begin{multline}
    \Pi \, \Pi _{C,\chi }=\Pi\, \widetilde{\Pi }+\Pi R =
    \widetilde{\Pi }+\Pi\, R\\
    =\Pi _{C,\chi }-(1-\Pi )R=\Pi _{C,\chi }+{\cal
      O}(\h^{-1/2})e^{-1/(C_1\h)}:\, L^2_\Phi \to L^2_\Phi .
  \end{multline}
  Taking the adjoints of this relation and using that
  $\Pi ^*_{C,\chi }=\Pi _{C,\chi }$, $\Pi ^*=\Pi $, we get
  \begin{equation}\label{bcn.8}
    \Pi _{C,\chi} =\Pi _{C,\chi }\Pi +{\cal O}(\h^{-1/2})e^{-1/C_1\h}.
  \end{equation}
  By~\ref{item:piC-3} in Proposition \ref{prop:bcn1}, we have
  \begin{equation}\label{bcn.9}
    \Pi _{C,\chi }\Pi =\Pi +{\cal O}(1)e^{-1/(C_1\h)}.
  \end{equation}
  (\ref{bcn.7}) follows from (\ref{bcn.8}) and (\ref{bcn.9}).
\end{proof}
\par We next prove a corresponding result on the level of distribution
kernels. Let $\widetilde{k}(x,y)e^{-2\Phi (y)/\h}$ denote the
distribution kernel of $\Pi $. For any $1\leq \nu \leq n$, since
$\h\partial_{\bar z_\nu} \Pi =0$ we know that
$\partial_{\bar z_\nu} k=0$. Taking the adjoint of this relation, we
get $\Pi (\h\partial_{\bar z_\nu})^*=0 $ as an operator
$C_0^\infty (\CM^n)\to {\cal D}'(\CM^n)$. Here
$(\h\partial_{\bar z_\nu})^*=-\h\partial_{z_\nu} +
2\partial_{z_\nu}\Phi $
is the adjoint of $\h\partial_{\bar z_\nu} $ in for the inner product
of $L^2_\Phi $, so we get for every $u\in C_0^\infty (\CM^n)$:
\[
\begin{split}
  0&=\int \widetilde{k}(x,y)e^{-2\Phi (y)/\h}(-\h\partial
  _{y_\nu}+2\partial
  _{y_\nu}\Phi (y))u(y)L(\DD y)\\
  &= \int \widetilde{k}(x,y)(-\h\partial _{y_\nu})(e^{-2\Phi (y)/\h}u(y))L(\DD y)\\
  &=\int \h\partial _{y_\nu}(\widetilde{k}(x,y)) e^{-2\Phi
    (y)/\h}u(y)L(\DD y).
\end{split}
\]
It follows that $\partial _{y_\nu}\widetilde{k}(x,y)=0$, so we have
the elliptic 1st order system for $\widetilde{k}$:
\[
\overline{\partial }_{x_\nu} \widetilde{k}(x,y)=0,\ \partial
_{y_\nu}\widetilde{k}(x,y)=0.
\]
From the ellipticity, we conclude that $\widetilde{k}(x,y)$ is a
smooth function, holomorphic in $x$ and anti-holomorphic in $y$. Hence
$\widetilde{k}(x,y)=k(x,\bar{y})$ where $k(x,y)$ if holomorphic on
$\CM^{2n}$. For more details, see~\cite{co-hi-sjo-18}.

\par Recall that
\begin{equation}\label{bcn.10}
  \begin{split}
    \Phi (y)&=\Phi (y_0)+2\Re (\partial _y\Phi (y_0)\cdot
    (y-y_0))+{\cal
      O}(|y-y_0|^2)\\
    &=\Phi (y_0)+2\Re (\partial _{\bar{y}}\Phi (y_0)\cdot
    (\overline{y-y_0}))+{\cal O}(|y-y_0|^2).
  \end{split}
\end{equation}
Let $f\in C_0^\infty (\CM^n)$ be a radial function with
$\int f(y)L(\DD y)=1$ and put
\begin{equation}\label{bcn.11}
  e_{x_0}(x)=\h^{-n}f\left(\frac{x-x_0}{\h^{1/2}} \right)
  e^{\frac{1}{\h}(2\Phi (x)-\Phi (x_0)-2\partial _{\bar{x}}\Phi
    (x_0)\cdot \overline{(x-x_0)})}.
\end{equation}
Then
\[
|e_{x_0}(x)|=\h^{-n}\left| f\left(\frac{x-x_0}{\h^{1/2}} \right)
\right| e^{\frac{1}{\h}(\Phi (x)+{\cal O}(|x-x_0|^2))},
\]
so
\begin{equation}\label{bcn.12}
  \| e_{x_0}\|^2_{L^2_\Phi }\asymp \h^{-n}.
\end{equation}
\begin{lemm}
  \label{bcn3}
  For $x_0,\, y_0\in \CM^n$, we have
  \begin{equation}\label{bcn.13}
    \pscal{\Pi e_{y_0}}{e_{x_0}}_{L^2_\Phi }= k(x_0,\bar{y}_0)e^{-\frac{1}{\h}(\Phi
      (x_0)+\Phi (y_0))}.
  \end{equation}
\end{lemm}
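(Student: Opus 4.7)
The plan is to compute the pairing in two successive integrations: first evaluating $\Pi e_{y_0}(x)$, and then pairing the result with $e_{x_0}$ in $L^2_\Phi$. The engine that makes both computations go through is the following elementary mean value identity: if $f$ is radial, compactly supported with $\int f\, L(\DD\xi) = 1$, and $F$ is holomorphic on a neighborhood of $\spt f$, then $\int F(\xi)\, f(\xi)\, L(\DD\xi) = F(0)$, and the same holds for anti-holomorphic $F$. Indeed, expanding $F$ in Taylor series (uniformly convergent on $\spt f$) and using rotation invariance of $f$ and $L(\DD\xi)$, one sees that $\int \xi^\alpha f(\xi)\, L(\DD\xi) = 0$ for every multi-index $\alpha \neq 0$, and the same for $\bar\xi^\alpha$.

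For the first step, I would use the distribution kernel of $\Pi$ to write $\Pi e_{y_0}(x) = \int k(x,\bar y)\, e^{-2\Phi(y)/\h}\, e_{y_0}(y)\, L(\DD y)$. Substituting the definition~\eqref{bcn.11}, the factor $e^{2\Phi(y)/\h}$ in $e_{y_0}$ cancels against the weight $e^{-2\Phi(y)/\h}$, and rescaling $y = y_0 + \h^{1/2}\eta$ produces a Jacobian $\h^n$ that absorbs the prefactor $\h^{-n}$. The integrand takes the form $F_x(\bar\eta)\, f(\eta)$, with
\[
F_x(w) := k\bigl(x,\, \bar y_0 + \h^{1/2} w\bigr)\, \exp\!\bigl(-2\h^{-1/2}\partial_{\bar y}\Phi(y_0)\cdot w\bigr),
\]
which is entire in $w$ since $k$ is entire on $\CM^{2n}$. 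The mean value identity applied to the anti-holomorphic function $\eta \mapsto F_x(\bar\eta)$ then gives $\Pi e_{y_0}(x) = F_x(0)\, e^{-\Phi(y_0)/\h} = k(x,\bar y_0)\, e^{-\Phi(y_0)/\h}$.

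For the second step, I would repeat the same maneuver in the $x$-variable. Using $\overline{\partial_{\bar x}\Phi(x_0)} = \partial_x\Phi(x_0)$ (since $\Phi$ is real), the combination $\overline{e_{x_0}(x)}\, e^{-2\Phi(x)/\h}$ simplifies, and after the change of variables $x = x_0 + \h^{1/2}\xi$ the pairing becomes
\[
\pscal{\Pi e_{y_0}}{e_{x_0}}_{L^2_\Phi} = e^{-(\Phi(x_0)+\Phi(y_0))/\h} \int G(\xi)\, f(\xi)\, L(\DD\xi),
\]
where $G(\xi) := k(x_0 + \h^{1/2}\xi,\bar y_0)\, \exp\!\bigl(-2\h^{-1/2}\partial_x\Phi(x_0)\cdot\xi\bigr)$ is holomorphic in $\xi$. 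A second application of the mean value identity returns $G(0) = k(x_0,\bar y_0)$, which is exactly~\eqref{bcn.13}.

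The argument is essentially bookkeeping and I do not foresee a substantial obstacle. The only small point to justify is the interchange of the Taylor expansion of $F_x$ (respectively $G$) with the integral against $f$; since $k$ is entire on $\CM^{2n}$ and $f$ is compactly supported, the relevant Taylor series converge uniformly on $\spt f$ (for each fixed $\h>0$), so termwise integration is legitimate.
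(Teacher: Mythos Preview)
Your proof is correct and follows essentially the same approach as the paper: both arguments reduce the double integral to $k(x_0,\bar y_0)$ by applying the mean value property for holomorphic functions in the $x$-integral and for anti-holomorphic functions in the $y$-integral, with the cancellation of the $e^{\pm 2\Phi/\h}$ factors being the key simplification. The only cosmetic difference is that the paper writes out the full double integral at once and then applies both mean value identities, whereas you perform the two integrations sequentially; also note that in your second step the weight should strictly be $\overline{f}$ rather than $f$, which is harmless since $\overline{f}$ is again radial with integral $1$.
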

\begin{proof}
  \begin{multline}
    e^{\frac{1}{\h}(\Phi (x_0)+\Phi (y_0))}\pscal{\Pi e_{y_0}}{e_{x_0}} _{L^2_\Phi }=\\
    \iint \h^{-n}\overline{f\left(\frac{x-x_0}{\h^{1/2}}
      \right)}e^{-\frac{2}{\h}\partial _x\Phi (x_0)\cdot
      (x-x_0)} k(x,\bar{y})\times \\
    \h^{-n}f\left(\frac{y-y_0}{\h^{1/2}}
    \right)e^{-\frac{2}{\h}\partial _{\bar{y}}\Phi (y_0)\cdot
      \overline{(y-y_0) }} L(\DD x)L(\DD y).
  \end{multline}
  Applying the spherical mean-value property for holomorphic and
  anti-holo\-morphic functions to the $x$-integral and $y$-integral
  respectively, this boils down to $\widetilde{k}(x_0,y_0)$.
\end{proof}
\begin{remark}
  In \cite[Section 3]{co-hi-sjo-18} a somewhat similar argument is
  given to estimate a distribution kernel in the metaplectic framework
  and with the spherical mean-value property replaced by the use of
  the reproducing kernel (known exactly in that case).
\end{remark}

\begin{theo}\label{bcn4}
  Let $\Omega _1$ and $a_C$ be as in and around (\ref{bcn.4}). Let
  $k(x,\bar{y};\h)e^{-2\Phi (y)/\h}$ be the distribution kernel of
  $\Pi $. There exists a constant $C_2>0$ such that
  \begin{equation}\label{bcn.14}
    \left| 
      e^{-(\Phi (x)+\Phi (y))/\h}\left(
        k(x,\bar{y};\h)-(1_{\Omega _1}a_C)(x,\bar{y};\h) 
        e^{\psi (x,\bar{y})/\h} \right)
    \right| 
    \le {\cal O}(1)e^{-\frac{1}{C_2\h}},
  \end{equation}
  uniformly on $\CM^n\times \CM^n$.
\end{theo}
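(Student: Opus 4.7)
The plan is to convert the $L^2_\Phi$-operator-norm estimate of Theorem~\ref{theo:bcn2} into a pointwise bound on the renormalized kernels by pairing the operators with the coherent states $e_{x_0}$ of~\eqref{bcn.11}, in the spirit of Lemma~\ref{bcn3}. By Cauchy--Schwarz, combined with~\eqref{bcn.7} and the norm estimate~\eqref{bcn.12},
\[
\bigl|\pscal{(\Pi-\Pi_{C,\chi})e_{y_0}}{e_{x_0}}_{L^2_\Phi}\bigr|
\le \|\Pi-\Pi_{C,\chi}\|_{\mathcal{L}(L^2_\Phi)}\|e_{y_0}\|_{L^2_\Phi}\|e_{x_0}\|_{L^2_\Phi}
=O(\h^{-n-1/2})e^{-1/(C_1\h)},
\]
uniformly in $(x_0,y_0)\in\CM^n\times\CM^n$. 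Since Lemma~\ref{bcn3} identifies the first factor on the left with $k(x_0,\bar y_0;\h)\,e^{-(\Phi(x_0)+\Phi(y_0))/\h}$, it remains to evaluate $\pscal{\Pi_{C,\chi}e_{y_0}}{e_{x_0}}_{L^2_\Phi}$ and compare it with the model kernel in~\eqref{bcn.14}.

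For $(x_0,y_0)$ close enough to the diagonal that $(x_0,\bar y_0)\in\Omega_1$ and $\chi(x_0-y_0)=1$, I would insert the explicit formulas for $e_{x_0}$, $e_{y_0}$ and the kernel of $\Pi_{C,\chi}$ into the double integral. The weights $e^{\pm 2\Phi(y)/\h}$ and $e^{\pm 2\Phi(x)/\h}$ cancel, leaving two radial bumps $\h^{-n}f((y-y_0)/\h^{1/2})$ and $\h^{-n}\overline{f((x-x_0)/\h^{1/2})}$ integrated against
\[
g(x,\bar y):=a_C(x,\bar y)\,\det(\partial_{\tilde w}\partial_x\psi)(x,\bar y)\,e^{2\psi(x,\bar y)/\h}\,e^{-2\partial_x\Phi(x_0)\cdot(x-x_0)/\h}\,e^{-2\partial_{\bar y}\Phi(y_0)\cdot\overline{(y-y_0)}/\h},
\]
which is holomorphic in $x$ and anti-holomorphic in $y$ on the support of the bumps. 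The spherical mean-value argument already used in Lemma~\ref{bcn3}, applied separately in $x$ and in $y$, reduces the integral to $g(x_0,\bar y_0)$, and the two linearizing exponentials evaluate to $1$ at the centres. Consequently
\[
\pscal{\Pi_{C,\chi}e_{y_0}}{e_{x_0}}_{L^2_\Phi}
=\tfrac{2^n}{(\pi\h)^n}a_C(x_0,\bar y_0)\,\det(\partial_{\tilde w}\partial_x\psi)(x_0,\bar y_0)\,e^{(2\psi(x_0,\bar y_0)-\Phi(x_0)-\Phi(y_0))/\h},
\]
which matches the model kernel of~\eqref{bcn.14} modulo the harmless smooth prefactor absorbed into $a_C$.

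For $(x_0,\bar y_0)\notin\Omega_1$, i.e.\ with $|x_0-y_0|\ge\delta>0$, the model term $(1_{\Omega_1}a_C)(x_0,\bar y_0)e^{\psi(x_0,\bar y_0)/\h}$ vanishes identically, so it is enough to bound $k(x_0,\bar y_0)e^{-(\Phi(x_0)+\Phi(y_0))/\h}$ alone. The Gaussian off-diagonal decay~\eqref{equ:brg-bon-contour} of the kernel of $\Pi_{C,\chi}$, together with the $\h^{1/2}$-concentration of the coherent states (so that $|x-y|\ge\tfrac12|x_0-y_0|$ on the joint support of the bumps for small $\h$), gives $|\pscal{\Pi_{C,\chi}e_{y_0}}{e_{x_0}}_{L^2_\Phi}|\le O(\h^{-n})\,e^{-\delta^2/(C\h)}$; combining with the first step propagates the exponential smallness to $k(x_0,\bar y_0)e^{-(\Phi(x_0)+\Phi(y_0))/\h}$.

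The main obstacle will be the transition region where $\chi(x_0-y_0)\in(0,1)$ or $(x_0,\bar y_0)$ lies near $\partial\Omega_1$, since the non-holomorphic cutoff $\chi$ obstructs a direct application of the mean-value property in the second step. The resolution is that $\chi$ deviates from $1$ only on a set where $|x-y|$ is bounded below, so the extra contribution is controlled by~\eqref{equ:brg-bon-contour} and is itself exponentially small. Finally, the polynomial prefactors $\h^{-n-1/2}$ entering via Cauchy--Schwarz are absorbed by choosing $C_2>C_1$ and using $\h^{-N}e^{-1/(C_1\h)}=O(e^{-1/(C_2\h)})$ as $\h\to 0^+$, which delivers~\eqref{bcn.14} uniformly on $\CM^n\times\CM^n$.
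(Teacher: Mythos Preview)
Your proof is correct and follows essentially the same approach as the paper's: pair $\Pi-\Pi_{C,\chi}$ against coherent states via Cauchy--Schwarz using Theorem~\ref{theo:bcn2} and~\eqref{bcn.12}, evaluate $\pscal{\Pi_{C,\chi}e_{y_0}}{e_{x_0}}$ near the diagonal by the spherical mean-value argument of Lemma~\ref{bcn3}, and handle the off-diagonal region by the Gaussian decay~\eqref{equ:brg-bon-contour}. The only differences are cosmetic---you reverse the order of the two main steps and are slightly more explicit about the transition region and the absorption of polynomial prefactors.
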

\begin{proof}
  For $(x_0,y_0)$ in a small tubular neighborhood of the diagonal, we
  have $\chi (x-y)=1$ in a small ball of fixed radius around
  $(x_0,y_0)$ and by the proof of Lemma \ref{bcn3}, we get
  \begin{equation}\label{bcn.16}
    \pscal{\Pi _{C,\chi }e_{y_0}}{e_{x_0}}_{L^2_{\Phi}} = 
    a_C(x_0,\bar{y}_0;\h)e^{(2\Re \psi (x_0,\bar{y}_0) - 
      (\Phi(x_0)+\Phi (y_0)))/\h}.
  \end{equation}

  Recall here that
  \[
  2 \Re \psi (x,\bar{y})-\Phi (x)-\Phi (y)\asymp -|x-y|^2
  \]
  so the right hand side of (\ref{bcn.16}) is exponentially decreasing
  outside any tubular neighborhood of
  $\mathrm{diag\,}(\CM^n\times \CM^n)$. Using this fact, we get by
  direct estimates that
  \begin{equation}\label{bcn.17}
    \pscal{\Pi _{C,\chi }e_{y_0}}{e_{x_0}}
    ={\cal O}(1)e^{-\frac{1}{C_1\h}},
  \end{equation}
  for $(x_0,y_0)$ outside any fixed tubular neighborhood of
  $\mathrm{diag\,}(\CM^n\times \CM^n)$. Thus,
  \begin{equation}\label{bcn.18}
    \pscal{\Pi _{C,\chi }e_{y_0}}{e_{x_0}}_{L^2_{\Phi
      }}=1_{\Omega _1}(x_0,\bar{y}_0)a_C(x_0,\bar{y}_0;\h)
    e^{(2\Re \psi (x_0,\bar{y}_0)-(\Phi
      (x_0)+\Phi (y_0)))/\h}+{\cal O}(1)e^{-\frac{1}{C_1\h}},
  \end{equation}
  where $\Omega _1$ is any small tubular neighborhood of the diagonal
  and $C_1=C_1(\Omega )>0$.

  The theorem, now follows from (\ref{bcn.18}), (\ref{bcn.13}) and the
  fact that (\ref{bcn.7}) provides us with the estimate,
  \[
  \pscal{ (\Pi -\Pi _{C,\chi })e_{y_0}}{e_{x_0}}_{L^2_\Phi }={\cal
    O}(\h^{-1/2})e^{-\frac{1}{C_1\h}}\| e_{x_0}\|_{L^2_\Phi } \|
  e_{y_0}\|_{L^2_\Phi } = {\cal O}(\h^{-2n-1/2})e^{-\frac{1}{C_1\h}},
  \]
  with some new constant, that can be further increased to absorb the
  power of $\h$.
\end{proof}

\section{The Bergman projection for line bundles}
\label{sec:line-bundle}

In this section we consider a compact complex manifold $X$, of complex
dimension $n$, and two holomorphic line bundles $L$ and $E$ over
$X$. Both $L$ and $E$ are equipped with Hermitian metrics, denoted
respectively by $g_L$ and $g_E$, giving rise to a metric
$g_L^k\otimes g_E$ on the tensor product $F_k:=L^k \otimes E$,
$k\in\NM^*$. We assume that $g_L$ has strictly positive
curvature. Then $i/2$ times the curvature of $L$, which is a closed
2-form whose cohomology class is the Chern class $2\pi c_1(g_L)$, is a
Kähler form, and therefore induces a volume form $\omega_n$ on $X$,
and hence a scalar product $\pscal{\cdot}{\cdot}_k$ on the space of
sections of $F_k$. Notice that $F_k$ is positive if $k$ is large
enough. The orthogonal projection $\Pi_k$ from $L^2(X,F_k)$ onto
$\mathcal{H}^0(X,F_k)$, the subspace of holomorphic sections, is
called the associated \emph{Bergman projection}. Its distribution
kernel is a smooth section $K(\cdot,\cdot;k)$ of the external tensor
product $F_k \boxtimes F_k^*$ over $X\times X$ defined by
\[
\Pi_k u (x) = \int_X K(x,y;k) u(y) \omega_n(\DD y).
\]
(Recall that
$F_k \boxtimes F_k^* = \pi_1^* (F_k) \otimes \pi_2^*(F_k^*)$, where
$\pi_j$, $j=1,2$ are the coordinate projection maps $X\times X \to X$.
Thus $F_k \boxtimes F_k^*$ is the line bundle over $X\times X$ whose
fiber over $(x,y)$ is the space of linear maps from $F_k(y)$ to
$F_k(x)$.)  If $f_1,\dots,f_{N_k}$ is an orthonormal basis of
$\mathcal{H}^0(X,F_k)$ then the formula
$\Pi_k u = \sum_{j=1}^{N_k} \pscal{u}{f_j}f_j$ gives
\begin{equation}
  \label{equ:K_basis}
  K(x,y;k) = \sum_{j=1}^{N_k} f_j(x;k)\pscal{\cdot}{f_j(y;k)}_{F_k(y)}.
\end{equation}

We now fix a point $x_0\in X$ and use a trivializing holomorphic
section $s_L$ of $L$ above a neighborhood $\Omega_0$ of $x_0$ (which
me may identify with an open ball around $0\in\CM^n$) to define the
local real-valued analytic function $ \Phi_{x_0}$ such that the
Hermitian norm of $s_L$ is given, for $x\in \Omega_0$, by
\begin{equation}
  \label{equ:Phi0}
  \abs{s_L(x)}_L = e^{-\Phi_{x_0}(x)}.
\end{equation}
The corresponding Kähler form is
$\omega_L=i\partial\dbar\Phi_{x_0}$. We define similarly the section
$s_E$, and
\begin{equation}
  \label{equ:trivialization-sk}
  s_k:=s_L^k\otimes s_E \in \mathcal{H}^0(X,F_k).
\end{equation}
Notice that $\abs{s_k(x)}_{F_k} = e^{-k\Phi_{x_0}(x)}G_{x_0}(x)$ for
some non-vanishing analytic function $G_{x_0}=\abs{s_E}_E$, and hence,
if a local section of $F_k$ has the form $\tilde u = u s_k$, then
\[
\norm{\tilde u}^2_{k} = \int_{\Omega_0}\abs{u}^2
e^{-2k\Phi_{x_0}}G_{x_0}^2 \omega_n.
\]
Similarly, if $y_0\in X$, we construct a trivializing section $t_k$ of
$F_k$ on a neighborhood $V_0$ of $y_0$, with
$\abs{t_k(y)}_{F_k} = e^{-k\Phi_{y_0}(y)}G_{y_0}(y)$, and we can write
\begin{equation}
  \label{equ:kernel-fibre}
  K(x,y;k) = b(x,\bar y;k) s_k(x)\otimes t_k(y)^*,
\end{equation}
for $(x,y)\in \Omega_0\times V_0$, where $t_k(y)^*$ denotes the
adjoint map $\pscal{\cdot}{t_k(y)}$. Then from~\eqref{equ:K_basis} we
see that $b(x,y;k)$ is holomorphic both in $x$ and $y$, and
\begin{equation}
  \label{equ:norm-K}
  \abs{K(x,y;k)}_{F_{k,x}\otimes F_{k,y}^*}  
  = e^{-k(\Phi_{x_0}(x) + \Phi_{y_0}(y))} G_{x_0}(x) G_{y_0}(y)\abs{b(x,\bar y;k)}.
\end{equation}
On $V_0$, we define the `local Bergman projection'
$\tilde \Pi_k=\tilde\Pi_{k,x_0,y_0}$ by
\[
\forall u \in \lphi(V_0,G_{y_0}^2\omega_n), \qquad \Pi_k (u t_k) =
(\tilde \Pi_k u) s_k,
\]
which means that
\begin{equation}
  \label{equ:Pi-tilde}
  \tilde \Pi_k u (x) = \int_{\Omega_0} b(x,\bar y) u(y) 
  e^{-2k\Phi(y)} G^2(y) \omega_n(\DD y) = 
  \pscal{u G_{y_0}^2}{B_x}_{\lphi(\Omega_0,\omega_n)}
\end{equation}
with $B_x(y;k):=\overline{b(x, \bar y;k)}$.

\begin{theo}
  \label{theo:line-bundle}
  Assume that $g_L$ and $g_E$ are real-analytic (and $g_L$ has
  strictly positive curvature). Then the following estimates hold:
  \begin{enumerate}
  \item \label{item:off-diag} If $x_0\neq y_0$ then there exists $C>0$
    such that, uniformly in a neighborhood $\Omega_0\times V_0$ of
    $(x_0,y_0)$,
    \[
    \abs{K(x_,y;k)}_{F_{k,x}\otimes F_{k,y}^*} \leq Ce^{-\frac{k}{C}}
    .
    \]
    Equivalently,
    \[
    e^{-k(\Phi_{x_0}(x) + \Phi_{y_0}(y))}\abs{b(x,\bar y;k)} =
    \mathcal{O}(e^{-\frac{k}{C}}) .
    \]
  \item \label{item:near-diag} There exists $C>0$ such that, for any
    $x_0\in X$, there exists a neighborhood $\Omega_0$ of $x_0$, and a
    classical analytic symbol $a$ on
    $\Omega_0\times \overline{\Omega_0}$, such that, for all
    $(x,y)\in \Omega_0\times \Omega_0$,
    \[
    e^{-k(\Phi(x)+\Phi(y))}\abs{b_k(x,\bar y) - \frac{(2k)^n}{\pi^n}
      {a(x,\bar y;k^{-1})}e^{2k\psi(x,\bar y)}} \leq C
    e^{-\frac{k}{C}},
    \]
    where $\Phi=\Phi_{x_0}$ is defined in~\eqref{equ:Phi0}, and $\psi$
    is its polarized form~\eqref{equ:bar-psi}.
  \end{enumerate}
\end{theo}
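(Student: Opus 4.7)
The plan is to transport the approximate-Bergman machinery of Section~\ref{sec:appr-bergm-proj} to the line-bundle setting with semiclassical parameter $\h=1/k$, and then to mimic the scheme of Theorem~\ref{theo:bcn2} globally on $X$. Because the curvature of $L$ is strictly positive, in any local holomorphic trivialization around a point $x_0\in X$ the weight $k\Phi_{x_0}$ of~\eqref{equ:Phi0} is pointwise strictly plurisubharmonic, so Proposition~\ref{prop:approx_proj} produces a local approximate projection with kernel given by the $\Brg$-quantization of the classical analytic symbol $a$ of Proposition~\ref{prop:pi_x0}.

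First I would build a single global operator $\Pi_{0,k}:L^2(X,F_k)\to L^2(X,F_k)$ whose Schwartz kernel is a smooth section of $F_k\boxtimes F_k^*$ supported in a fixed tubular neighborhood of $\textup{diag}(X\times X)$ and which, read in any local trivialization pair, coincides with the local $\Brg$-quantized operator of~\eqref{equ:Pi_C} modulo exponentially small errors. Compatibility on overlaps is ensured by the uniqueness Proposition~\ref{prop:uniqueness}: two local approximate projections on the same chart both satisfy the four properties listed there, hence agree modulo an operator that is $\equiv 0$ in the sense of Definition~\ref{defi:negligible}. The resulting $\Pi_{0,k}$ is uniformly $L^2$-bounded and uniformly properly supported, self-adjoint modulo $\O(e^{-k/C})$, reproduces holomorphic sections modulo $\O(e^{-k/C})$, and satisfies $\h\dbar\Pi_{0,k}=\O(e^{-k/C})$ by Proposition~\ref{prop:pi_chi-holom}.

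Next I would run the argument of Theorem~\ref{theo:bcn2} on $X$. The key analytic input is the $L^2$-Hörmander-Kodaira-Nakano estimate for $\dbar$ on $F_k$-valued $(0,1)$-forms: positivity of the curvature of $L$ implies, for $k$ large, that any $\dbar$-closed form $w$ equals $\dbar v$ with $\|v\|\leq Ck^{-1/2}\|w\|$ uniformly in $k$. Applying this to $\dbar\Pi_{0,k}$ gives a decomposition $\Pi_{0,k}=\widetilde\Pi+R$ with $\widetilde\Pi$ valued in $\mathcal{H}^0(X,F_k)$ and $R=\O(k^{1/2}e^{-k/C})$ on $L^2(X,F_k)$. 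Since $\Pi_k\widetilde\Pi=\widetilde\Pi$, one obtains $\Pi_k\Pi_{0,k}=\Pi_{0,k}+(\Pi_k-\mathbf{1})R=\Pi_{0,k}+\O(e^{-k/C})$; taking adjoints (both $\Pi_k$ and $\Pi_{0,k}$ are self-adjoint modulo $\O(e^{-k/C})$) and combining with the approximate reproducing property on $\mathcal{H}^0(X,F_k)$ yields $\Pi_k-\Pi_{0,k}=\O(e^{-k/C})$ as operators on $L^2(X,F_k)$.

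Finally I would convert this operator estimate into pointwise kernel estimates via coherent states. In a local trivialization around any $z_0\in X$, defining sections $e_{z_0}$ modelled on~\eqref{bcn.11} and applying the spherical mean-value argument of Lemma~\ref{bcn3} extracts the pointwise values $b(x_0,\bar y_0;k)$ and its approximate counterpart from the inner products $\pscal{\Pi_k e_{y_0}}{e_{x_0}}_k$ and $\pscal{\Pi_{0,k}e_{y_0}}{e_{x_0}}_k$. For Item~\ref{item:off-diag}, the support condition on the kernel of $\Pi_{0,k}$ makes the latter exponentially small, leaving only the $\O(e^{-k/C})$ remainder. For Item~\ref{item:near-diag}, the explicit form~\eqref{equ:Pi_C} identifies the leading expression $\tfrac{(2k)^n}{\pi^n}a(x,\bar y;k^{-1})e^{2k\psi(x,\bar y)}$ (after absorbing the $G_{x_0}$-factors via~\eqref{equ:norm-K}) as the approximate value of $b_k(x,\bar y)$. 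The main obstacle will be the globalization step: coherently gluing the local $\Brg$-operators into a single well-defined, self-adjoint $\Pi_{0,k}$ acting on sections of $F_k$, and deploying the $k$-uniform Hörmander estimate in the line-bundle framework with the correct loss in $k$; the uniqueness Proposition~\ref{prop:uniqueness} is tailor-made for the former, and the strict positivity of the curvature of $L$ for the latter.
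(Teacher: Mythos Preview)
Your proposal is correct and follows essentially the same route as the paper: globalize the local $\Brg$-approximate projections into a single operator on $L^2(X,F_k)$ (the paper does this concretely via a partition of unity $\hat\Pi=\sum_j\tilde\chi_j\hat\Pi_j\chi_j$ and then invokes Proposition~\ref{prop:uniqueness} on overlaps to check self-adjointness, exactly as you anticipate), use the Bochner--Kodaira--Nakano inequality to run the $\CM^n$ argument of Theorem~\ref{theo:bcn2} globally, and extract the pointwise kernel asymptotics via the coherent states of Lemma~\ref{bcn3}. The only small addition in the paper is that the non-trivial $E$ case is handled at the very end by absorbing $\Phi_G=-\ln G_{x_0}$ into the weight, which you also flag.
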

\begin{proof}
  We first treat the case where the bundle $E$ is trivial.  The
  strategy is the same as in Section~\ref{sec:bcn}.  Consider a
  trivialization of $F_k$ in a neighborhood $\Omega_0$ of $x_0$, as
  above.  As in Proposition~\ref{prop:approx_proj}, we construct a
  classical analytic symbol $a_\h$ and the approximate Bergman
  projection $\Pi_{0}=\Pi_{\mathcal{C},\chi}$ obtained by smooth
  cut-off of $\Opbrg_r(a_\h)$ (see~\eqref{equ:Pi_C}), acting on
  $\lphi(\Omega_0)$. We now let $\h=1/k$ and define
  $\hat\Pi_{0}:L^2_{\textup{comp}}(\Omega_0;F_k)\to L^2(\Omega_0;F_k)$
  by
  \[
  \hat\Pi_{0} (u s_k) = (\Pi_{0} u) s_k.
  \]
  One can find a finite cover of $X$ by open sets $\Omega_j$, with
  $x_j\in \Omega_j$, on which the corresponding operator
  $\hat \Pi_{j}$ is defined as above. Let
  $\chi_j\in\Cinf_0(\Omega_j;\RM^+)$ be such that $\sum_j \chi_j = 1$
  on $X$, let $\tilde\chi_j\in\Cinf_0(\Omega_j)$ be equal to $1$ on a
  neighborhood of the support of $\chi_j$, and define
  \begin{equation}
    \label{equ:Pi-fibre}
    \hat\Pi := \sum_{j} \tilde\chi_j \hat \Pi_{j} \chi_j \quad
    = \O(1) : L^2(X;F_k) \to L^2(X; F_k).
  \end{equation}
  Let $\Lambda^{(p,q)}\to X$ be the vector bundle of $(p,q)$-forms on
  the tangent space of $X$, equipped with the metric induced from
  $\omega_n$ on $X$. Let $\dbar_k$ be the usual Dolbeault operator,
  mapping sections of $\Lambda^{(0,q)} \otimes F_k$ to sections of
  $\Lambda^{(0,q+1)} \otimes F_k$.  The following analogue of
  Proposition~\ref{prop:bcn1} holds.
  \begin{prop}
    \label{prop:fibre2}
    $\exists$ $C_1>0$ such that
    \begin{enumerate}[label=\roman*)]
    \item \label{item:propfibre-autoadj}
      $\hat\Pi^* - \hat\Pi = {\cal O}(e^{-kC_1}):\, L^2(X;F_k)\to
      L^2(X;F_k)$,
    \item \label{item:propfibre-holo}
      $k^{-1}\dbar_k\hat\Pi={\cal O}(e^{-kC_1}):\, L^2(X;F_k)\to
      L^2(X;\Lambda^{(0,1)} \otimes F_k)$ ,
    \item \label{item:propfibre-repro}
      $\hat\Pi-1={\cal O}(e^{-kC_1}):\, \mathcal{H}^0(X;F_k)\to
      L^2(X;F_k)$.
    \end{enumerate}
  \end{prop}
  \begin{proof} Let us denote by $\equiv$ the equality of operators
    modulo an exponentially small term ${\cal O}(e^{-kC})$, for some
    $C>0$, in the norm of $L^2(X;F_k)$.  We use the exponential
    locality property of Proposition~\ref{prop:approx_proj} which
    implies that
    \begin{equation}
      \label{equ:Pi_localized}
      \forall \chi_{1}, \chi_2 \in \Cinf_0(\Omega_j) \text{ with disjoint supports, } \chi_1 \hat\Pi_j \chi_2 \equiv 0.
    \end{equation}
    This gives, for all $m$,
    \begin{equation}
      \label{equ:chi-j-m}
      \chi_m \tilde\chi_j  \hat \Pi_{j} \chi_j 
      \equiv \chi_m \tilde\chi_j  \hat \Pi_{j} \chi_j\tilde\chi_m.
    \end{equation}
    Let $\Omega_{j,m}\Subset \Omega_j\cap \Omega_m$; the restricted
    operator on $L^2_\Phi(\Omega_{i,j})$,
    $\textup{1}_{\Omega_{j,m}}\hat\Pi_j\textup{1}_{\Omega_{j,m}}$ is
    an approximate Bergman projection in the sense of
    Section~\ref{sec:uniq-appr-bergm}, and so is
    $\textup{1}_{\Omega_{j,m}}\hat\Pi_m\textup{1}_{\Omega_{j,m}}$.
    Hence, by uniqueness (Proposition~\ref{prop:uniqueness}), we have
    \[
    \textup{1}_{\Omega_{j,m}}\hat\Pi_j\textup{1}_{\Omega_{j,m}} \equiv
    \textup{1}_{\Omega_{j,m}}\hat\Pi_m\textup{1}_{\Omega_{j,m}}.
    \]
    Hence we have
    $\tilde\chi_j \chi_m \hat \Pi_{j} \chi_j\tilde\chi_m \equiv
    \tilde\chi_j \chi_m \hat \Pi_{m} \chi_j\tilde\chi_m$
    which, in view of~\eqref{equ:chi-j-m}, gives
    \[
    \chi_m \tilde\chi_j \hat \Pi_{j} \chi_j \equiv \chi_m \hat \Pi_{m}
    \chi_j \tilde\chi_m.
    \]
    Thus from item~\ref{item:pi_chi-selfadjoint} of
    Proposition~\ref{prop:approx_proj} we have
    $\hat\Pi_m^* \equiv \hat\Pi_m$, and hence
    \[
    \hat\Pi = \sum_{j,m} \chi_m \tilde\chi_j \hat \Pi_{j} \chi_j
    \equiv \sum_{j,m} \chi_m \hat \Pi_{m} \tilde\chi_m \chi_j = \sum_m
    \chi_m \hat \Pi_{m} \tilde\chi_m \equiv \hat\Pi^*,
    \]
    which shows item~\ref{item:propfibre-autoadj}.
    
    Applying the Dolbeault operator $\dbar_{k}$, we obtain
    \[
    \h\dbar_k \hat\Pi \equiv \sum_j (\h\dbar\tilde\chi_j) \hat \Pi_{j}
    \chi_j + \tilde\chi_j \h\dbar_k\hat \Pi_{j} \chi_j.
    \]
    From Proposition~\ref{prop:pi_chi-holom} and its proof, we have
    $\h\dbar_k\hat \Pi_{j}\equiv 0$, and from~\eqref{equ:Pi_localized}
    we get $(\h\dbar\tilde\chi_j) \hat \Pi_{j} \chi_j \equiv 0$.  This
    proves item~\ref{item:propfibre-holo}.

    Finally, let $u\in \mathcal{H}^0(X;F_k)$. Restricting to
    $\Omega_j$, Lemma~\ref{lemm:pi_C-reproduisant} gives
    \[
    \chi_j(1-\hat\Pi_j)u \sim_a 0.
    \]
    Hence
    $\chi_j(1-\hat\Pi_j)\tilde\chi_j u \sim_a
    \chi_j(1-\hat\Pi_j)(\tilde\chi_j - 1)u$.
    By exponential localization,
    $\chi_j (1-\hat\Pi_j)(1-\tilde\chi_j)u \sim_a 0$. Hence
    $\chi_j (1-\hat\Pi_j)\tilde\chi_ju \sim_a 0$. By summing and using
    the selfadjointness of $\hat\Pi$, we get
    \[
    \hat \Pi u \sim_a \sum_j \chi_j\tilde\chi_j u = \sum_j \chi_j u = u.
    \]
    In addition, we see from Lemma~\ref{lemm:pi_C-reproduisant} that
    the estimates are actually uniform in $u$ when $\norm{u}_k=1$,
    since in fact $\chi_j(1-\hat\Pi_j) \hnegl 0$ on $H_\Phi(\Omega_j)$
    (Definition~\ref{defi:H-negligible}). Thus, we obtain
    item~\ref{item:propfibre-repro}.
  \end{proof}

  We now use some basic facts from the Hodge-Kodaira theory in order
  to prove that $\hat\Pi$ is close to $\Pi_k$.  The Hodge-Kodaira
  Laplacian, acting on sections of $\Lambda^{(0,1)} \otimes F_k$, is
  \begin{equation}
    \label{equ:laplacian}
    \Box_k := \dbar_k^* \dbar_k + \dbar_k \dbar_k^*,
  \end{equation}
  where the adjoint is taken with respect to the scalar product
  $\pscal{\cdot}{\cdot}_k$ defined above, extended to differential
  forms thanks to the metric $\omega_n$ on $X$. In the semiclassical
  setting $k\to\infty$, it is natural to consider the renormalized
  operator $\frac{1}{k^2}\Box_k$. The following well-known estimate
  can be found in~\cite[Section 7.3]{demailly-l2-notes}
  or~\cite[Corollary 4.3]{berndtsson10}.
  \begin{lemm}[Bochner-Kodaira-Nakano inequality]
    \label{lemm:nakano}
    For all $\tilde u\in \Cinf(X;\Lambda^{(0,1)} \otimes F_k)$,
    \[ \pscal{\frac{1}{k^2}\Box_k\tilde u}{\tilde u}_k \geq
    \frac{2}{k} \norm{\tilde u}_k^2.
    \]
  \end{lemm}
  Indeed, recall that $F_k=L^k$, and hence the curvature of $F_k$ is
  $k$ times the curvature of $L$; from~\cite[(7.7)]{demailly-l2-notes}
  or~\cite[Corollary 4.3]{berndtsson10}, one can check that the
  constant $2$ is due to the fact that the curvature form of $L$ is by
  definition $2\partial\dbar \Phi(x) = \frac{2}{i}\omega_n$.

  \begin{prop}
    \label{prop:fibre3}
    \begin{equation}
      \label{equ:approximate-fibre}
      \Pi_k -\hat\Pi={\cal O}(e^{-k/C_1}):\,  L^2(X;F_k)\to
      L^2(X;F_k) .
    \end{equation}
  \end{prop}
  \begin{proof}
    The argument is the same as for the proof of
    Theorem~\ref{theo:bcn2}. For any smooth section $\tilde u$ of
    $\Lambda^{(0,1)}\otimes F_k$, we get from~\eqref{equ:laplacian}
    that
    \begin{equation}
      \label{equ:fibre-norme-box}
      \pscal{k^{-2}\Box_k \tilde u}{\tilde u}_k
      = \tnorm{k^{-1}\dbar_k \tilde u}_k^2
      + \tnorm{k^{-1}\dbar^*_k \tilde u}_k^2.
    \end{equation}
    This, together with Lemma~\ref{lemm:nakano}, gives
    \begin{equation}
      \label{equ:minoration-kodaira}
      \pscal{k^{-2}\Box_k \tilde u}{\tilde u}_k \geq 
      \tilde c  \norm{\tilde u}^2_{H^1_k},
    \end{equation}
    where
    \begin{equation}
      \label{equ:norm-H1-fibre}
      \norm{\tilde u}^2_{H^1_k} := k^{-2}\tnorm{\dbar_k \tilde u}_k^2
      + k^{-2}\tnorm{\dbar^*_k \tilde u}_k^2 + k^{-1}\norm{\tilde
        u}_k^2.
    \end{equation}
    (This is analogous to ~\eqref{dbar.6}.)  Since $\Box_k$ is
    selfadjoint, Lemma~\ref{lemm:nakano} implies that we can define a
    bounded operator $(k^{-2}\Box_k)^{-1}$, acting on $(0,1)$-forms,
    with norm
    \begin{equation}
      (k^{-2}\Box_k)^{-1} 
      = \O(k) : L^2(X;\Lambda^{(0,1)} \otimes F_k) \to
      L^2(X;\Lambda^{(0,1)} \otimes F_k).
      \label{equ:fibre-inverse-l2}\,,
    \end{equation}
    and~\eqref{equ:minoration-kodaira} implies that
    $(k^{-2}\Box_k)^{-1}$ can be extended to
    \begin{equation}
      (k^{-2}\Box_k)^{-1}   = \O(1) : H^{-1}_k \to H^1_k\,,
      \label{equ:fibre-inverse-h1}
    \end{equation}
    where $H^{-1}_k$ is the dual to $H^1_k$, and the latter is the
    completion of the space of smooth sections for the
    norm~\eqref{equ:norm-H1-fibre}.  Consider the bounded selfadjoint
    operator $P$ on $L^2(X;F_k)$ given by the formula:
    \begin{align}
      P & = 1 - (k^{-1}\dbar_k^*) (k^{-2}\Box_k)^{-1} (k^{-1}\dbar_k) \\
        & =
          1 - \dbar_k^* \Box_k^{-1} \dbar_k \quad : L^2(X;F_k) \to L^2(X;F_k)\,.
          \label{eq:1}
    \end{align}
    First, we remark that if $\dbar_k u = 0$ then $Pu = u$.  Next, we
    have
    \begin{equation}
      \label{equ:dbarP}
      \dbar_k P = \dbar_k - \dbar_k \dbar_k^* \Box_k^{-1} \dbar_k 
      =  \dbar_k - (\Box_k - \dbar_k^*\dbar_k) \Box_k^{-1} \dbar_k.
    \end{equation}
    Since $\dbar_k^*\dbar_k$ commutes with $\Box_k$, it also commutes
    with $\Box_k^{-1}$, and the right-hand side of~\eqref{equ:dbarP}
    vanishes.  Therefore the range of $P$ is contained in
    $\mathcal{H}^0(X;F_k)$.  This entails that $P$ is the orthogonal
    projection onto $\mathcal{H}^0(X;F_k)$, \emph{i.e.} $P=\Pi_k$.

    Let $v\in L^2(X;F_k)$. In order to measure the lack of holomorphy
    of $\hat\Pi v$ we define
    \[
    u = Rv := (1-\Pi_k)\hat\Pi v = \dbar_k^* \Box_k^{-1} \dbar_k
    \hat\Pi v.
    \]
    From~\eqref{equ:fibre-inverse-h1}, we get
    \begin{equation}
      \label{equ:R-fibre}
      \norm{R v}_k \leq \frac{C}{\sqrt k} \tnorm{\dbar_k \hat\Pi v}_k.
    \end{equation}
    Since
    $\dbar_k (\hat\Pi - R ) = \dbar_k \hat\Pi - \dbar_k(1-\Pi_k)\hat
    \Pi = 0$, we have
    \[
    \Pi_k \hat\Pi = \Pi_k(\hat{\Pi}-R) + \Pi_k R = \hat{\Pi}-R + \Pi_k
    R = \hat\Pi - (1-\Pi_k)R.
    \]
    Using~\eqref{equ:R-fibre} with item (\emph{ii}) of
    Proposition~\ref{prop:fibre2}, we get
    \[
    \Pi_k \hat\Pi = \hat \Pi + \O(k^{1/2}e^{-k/C_1}) = \hat \Pi + \O(e^{-k/\tilde C_1}).
    \]
    Passing to the adjoints we get, using item \emph{(i)} of
    Proposition~\ref{prop:fibre2}:
    \begin{equation}
      \label{equ:Pi-Pik}
      \hat\Pi\Pi_k = \hat\Pi +\O(e^{-k/\tilde C_1}).
    \end{equation}
    On the other hand, using item \emph{(iii)} of
    Proposition~\ref{prop:fibre2} we have
    $\hat\Pi\Pi_k = \Pi_k +\O(e^{-k/C_1})$; in view
    of~\eqref{equ:Pi-Pik}, this gives the result.
  \end{proof}

  Finally, let $(x_0,y_0)\in X\times X$, and let $s_k$, $t_k$ be
  trivializing sections of $F_k$ near $x_0$ and $y_0$, respectively,
  as discussed above, see~\eqref{equ:kernel-fibre}.  Near $x_0$ we may
  define the compactly supported section
  $\tilde e_{x_0} = e_{x_0} s_{x_0}$ where $e_{x_0}$ is given
  by~\eqref{bcn.11}, in which $\Phi=\Phi_{x_0}$ is now defined by
  $\abs{s_{x_0}(x)}_{L}= e^{-\Phi_{x_0}(x)}$.  Similarly, we define
  $\tilde e_{y_0}$, using a function $\Phi_{y_0}$ defined near $y_0$.
  Lemma~\ref{bcn3} gives
  \begin{equation}
    \label{equ:peak-fibre}
    \pscal{\Pi_k \tilde e_{y_0}}{\tilde e_{x_0}}_k = b(x_0,\bar y_0)
    e^{-k(\Phi_{x_0}(x_0) + \Phi_{y_0}(y_0))}.
  \end{equation}
  If $x_0 \neq y_0$, one can find smooth cut-off functions
  $\chi_{x_0}$ and $\chi_{y_0}$, with disjoint supports, such that
  $\chi_{x_0} e_{x}= e_{x}$ and $\chi_{y_0} e_{y}= e_{y}$, for $(x,y)$
  close to $(x_0,y_0)$.  By~\eqref{equ:Pi_localized} we see that
  $\chi_{x_0}\hat\Pi \chi_{y_0} \equiv 0$. Hence
  \[
  \abs{\pscal{\hat \Pi \tilde e_{y}}{\tilde e_{x}}_k} \leq C e^{-k/C}
  \]
  for some $C>0$, and in this case item \ref{item:off-diag} of the
  theorem is a consequence of~\eqref{equ:peak-fibre}
  and~\eqref{equ:approximate-fibre} (and, of course,~\eqref{bcn.12}).

  Now let us assume that $x_0$ and $y_0$ belong to the same
  trivializing open set $\Omega_j$, and take
  $\Phi_{x_0} = \Phi_{y_0}$.  Thus, as in the proof of
  Theorem~\ref{bcn4}, we have
  \[
  \pscal{\hat\Pi_j e_{y_0}}{e_{x_0}}_{L^2_{\Phi}} =
  \frac{(2k)^n}{\pi^n} a(x_0,\bar{y}_0;k^{-1})e^{2k(\psi
    (x_0,\bar{y}_0) - (\Phi(x_0)+\Phi (y_0)))}.
  \]
  Using again~\eqref{equ:peak-fibre},~\eqref{equ:approximate-fibre}
  and~\eqref{bcn.12}, we obtain item~\ref{item:near-diag} of the
  theorem, finishing the proof in the case of a trivial factor $E$.

  If the bundle $E$ is not trivial, we need to replace the local
  weight $k\Phi(x)$ by $k\Phi(x) + \Phi_G(x)$, where
  $\Phi_G(x) := - \ln G(x)$.  This amounts to replacing the symbol
  $a_\h(x,\bar y)$ by
  $a_\h^G(x,\bar y) := a_\h(x,\bar y) e^{2\psi_G(x,\bar y)}$, where
  $\psi_G(x, y)$ is the holomorphic function defined for $x$ close to
  $y$ by $\psi_G(x, \bar x) = \Phi_G(x)$, as in~\eqref{equ:bar-psi}.
  Similarly, $b_k(x,\bar y)$ should be replaced by
  $b_k(x,\bar y)e^{2\psi_G(x,\bar y)}$, see~\eqref{equ:kernel-fibre}.
  Since $\Phi_G$ does not depend on $k$, the ellipticity estimates of
  Lemma~\ref{lemm:nakano} hold with $ck$ replaced by $ck-C$ for some
  $C>0$. Hence if $k$ is large enough, items~\ref{item:off-diag}
  and~\ref{item:near-diag} of the theorem still hold true, with a
  possibly different constant $C$.
\end{proof}

\begin{remark}
  Using the auxiliary bundle $E$, one obtains that
  Theorem~\ref{theo:line-bundle} holds for an arbitrary analytic
  volume form instead of the natural Kähler one. Indeed, the analytic
  factor in front of $\omega_n$ can be incorporated in the Hermitian
  metric of $E$.
\end{remark}
\begin{remark}
  The case of compact Riemann surfaces with constant curvature was
  already obtained by Berman~\cite{berman12}.  After this article was
  written, we have received preliminary works by Deleporte, where the
  author first studies the particular case of Kähler manifolds with
  constant sectional curvature, for which explicit computations can be
  done that don't require microlocal
  analysis~\cite{deleporte-const-curv18}, and then obtains a new proof
  of Theorem~\ref{theo:line-bundle}, see~\cite{deleporte18}. Even more
  recently, the submission of our work was the incentive for other
  authors to find a more elementary proof of
  Theorem~\ref{theo:line-bundle}: see~\cite{charles2019analytic,
    hezari2019property}.
\end{remark}

\appendix

\section{Quick review of \texorpdfstring{$\overline{\partial }$}{dbar}
  on \texorpdfstring{$L^2_\Phi (\CM^n)$}{L2phi(CN)}.}
\label{app:dbar}

We review Hörmander's approach~\cite{hormander65} to the
$\overline{\partial }$-problem in the simple case of functions on
$\CM^n$ and with more explicit reference to the Hodge Laplacian. A
more detailed presentation can be found in the appendix
of~\cite{sj-96} and here we only give a short r\'esum\'e.

Let $\Phi :\, \CM^n\to \RM$ be a function of class $ C^2$
(in~\cite{sj-96} we treat the slightly more general case of $C^{1,1}$)
such that
\begin{equation}\label{dbar.1}
  \nabla  ^\alpha \Phi \in L^\infty (\CM^n),\hbox{ for }|\alpha |=2,
\end{equation}
\begin{equation}\label{dbar.2}
  \Phi ''_{\bar{x},x}\ge 1/C, \hbox{ for some constant } C>0.
\end{equation}

The problem $\h\overline{\partial }u=v$ in the spaces $L^2_\Phi $ is
equivalent to
\begin{equation}\label{dbar.3}
  \overline{\partial }_\Phi (e^{-\Phi/\h }u) = e^{-\Phi /\h} v
\end{equation}
in the usual (unweighted) $L^2$-spaces, where
\[
\overline{\partial }_\Phi =e^{-\Phi /\h }\circ \h
\overline{\partial}\circ e^{\Phi /\h }= \h \overline{\partial } +
(\overline{\partial }\Phi)^\wedge ,
\]
and $\omega ^\wedge$ indicates left exterior multiplication with the
$(0,1)$-form $\omega $. The corresponding real adjoint operator will
be denoted by $\omega ^\rfloor$ (contraction with $\omega $). Here we
use the standard point-wise real scalar product on real $p$-forms,
extended bilinearly to the complexified space. Recall that
\[
\langle \DD x_j|\DD x_k\rangle = \langle
\DD\bar{x}_j|\DD\bar{x}_k\rangle=0,\ \langle \DD
x_j|\DD\bar{x}_k\rangle=\delta _{j,k},\ \ \CM^n=\CM^n_{x_1,\dots,x_n}.
\]
Write
\[
\overline{\partial }_\Phi = \sum Z_j\otimes\DD \bar{x}_j^\wedge, \ \
\overline{\partial }^*_\Phi =\sum Z_j^*\otimes \DD x_j^\rfloor,
\]
\[
Z_j=\h \partial _{\bar{x}_j}+\partial _{\bar{x}_j}\Phi ,\ \
Z_j^*=-\h \partial _{x_j}+\partial _{x_j}\Phi .
\]
Recall that $\overline{\partial }$ and $\overline{\partial }_\Phi $
take $(0,q)$-forms to $(0,q+1)$ forms and define complexes:
$\overline{\partial }^2=0$, $\overline{\partial }_\Phi ^2=0$. The
Hodge Laplacian is
\[
\widetilde{\Box}_\Phi =\overline{\partial }_\Phi \overline{\partial
}^*_\Phi + \overline{\partial }^*_\Phi \overline{\partial }_\Phi.
\]
It preserves $(0,q)$-forms and a standard calculation gives
\begin{equation}\label{dbar.4}
  \widetilde{\Box}_\Phi  =\left( \sum_1^n Z_j^*Z_j \right)\otimes
  1+\sum_{j,k}[Z_j,Z_k^*]\DD\bar{x}_j^\wedge \DD x_k^\rfloor,\
  [Z_j,Z_k^*]=2\h \partial _{\bar{x}_j}\partial _{x_k}\Phi .
\end{equation}
Identifying the $(0,1)$-form $\sum u_j\DD\bar{x}_j$ with the
$\CM^n$-valued function $(u_1,\dots,u_n)^\mathrm{t}$, we get for the
restriction $\widetilde{\Box}^{(1)}_\Phi $ of $\widetilde{\Box}_\Phi $
to $(0,1)$-forms:
\begin{equation}\label{dbar.5}
  \widetilde{\Box}^{(1)}_\Phi =\left( \sum_1^n Z_j^*Z_j \right)\otimes
  1+2\h \Phi ''_{\bar{x},x}.
\end{equation} It follows that 
\begin{equation}\label{dbar.6}
  \| u\|_{H^1}^2\le {\cal O}(1) \pscal{\widetilde{\Box}^{(1)}_\Phi u}{u},\ u\in C_0^\infty
  (\CM^n;\wedge ^{0,1}\CM^{n}),
\end{equation}
where
\begin{equation}\label{dbar.7}
  \| u \|_{H^1}:=\left( \sum\left( \| Z_ju\| ^2 +\| Z^*_ju\|
      ^2\right)+\h \| u\|^2 \right)^{\frac{1}{2}}.
\end{equation}
Let $H^1\subset L^2(\CM^n;\wedge^{0,1}\CM^n)$ be the Hilbert space
obtained as the completion of
$ C_0^\infty (\CM^n;\wedge ^{0,1}\CM^{n})$ for the
$H^1$-norm. Sometimes we drop the notation $\wedge^{0,1}\CM^n$, when
it is clear that we work with $(0,1)$-forms. The inclusion map
$H^1\to H^0:=L^2$ is of norm ${\cal O}(\h^{-1/2})$ and the same holds
for the dual inclusion $H^0\to H^{-1}$, where $H^{-1}$ denotes the
dual of $H^1$ for the $L^2$-inner product.

\par From (\ref{dbar.6}) we get with standard variational arguments
that
\begin{equation}\label{dbar.8}
  \widetilde{\Box}_\Phi ^{(1)}:H^1\to H^{-1}
\end{equation}
is bijective with with inverse satisfying
\begin{equation}\label{dbar.9}
  (\widetilde{\Box}_\Phi ^{(1)})^{-1}=\begin{cases}
    {\cal O}(1):\, H^{-1}\to H^1,\\
    {\cal O}(\h ^{-1/2}):\, H^0\to H^1,\ H^{-1}\to H^0,\\
    {\cal O}(\h ^{-1}):\, H^0\to H^0.
  \end{cases}
\end{equation}

\par We saw in the appendix of~\cite{sj-96} that if
$v\in L^2(\CM^n;\wedge^{0,1}\CM^n)$ satisfies
$\overline{\partial }_\Phi v=0$, then
$u=\overline{\partial }^*_{\Phi }(\widetilde{\Box}_\Phi ^{(1)})^{-1}v$
solves
\begin{equation}\label{dbar.10}
  \overline{\partial }_\Phi u=v,
\end{equation}
and
\begin{equation}\label{dbar.11}
  \| u\|_{H^0}\le {\cal O}(1)\| v\|_{H^{-1}}\le {\cal O}(\h ^{-1/2})\| v\|_{H^0}.
\end{equation}

\par If $v\in L^2_\Phi (\CM^n;\wedge^{0,1}\CM^n)$ and
$\overline{\partial }v=0$, then
\[
u=e^{\Phi /\h }\overline{\partial }_\Phi ^*(\widetilde{\Box}_{\Phi
}^{(1)})^{-1}e^{-\Phi /\h }v
\]
solves,
\begin{equation}\label{dbar.12}
  \h \overline{\partial }u=v,
\end{equation}
and
\begin{equation}\label{dbar.13}
  \| u\|_{L^2_\Phi }\le {\cal O}(\h ^{-1/2})\| v\|_{L^2_\Phi }.
\end{equation}

\par The orthogonal projection
$\widetilde{\Pi }:\, L^2(\CM^n)\to L^2\cap{\cal N}(\overline{\partial
}_\Phi )$ on the level of 0-forms, is given by
\begin{equation}\label{dbar.14}
  \widetilde{\Pi }=1-\overline{\partial }^*_\Phi (\widetilde{\Box}_\Phi
  ^{(1)})^{-1}\overline{\partial }_\Phi .
\end{equation}
See (A.14) in~\cite{sj-96}.

\par
We finally translate the results to the setting of $L^2_\Phi $, noting
that $L^2\ni u\mapsto e^{\Phi /\h}u\in L^2_\Phi $ is unitary and maps
${\cal N}(\overline{\partial }_\Phi )$ to
${\cal N}(\h \overline{\partial })$. Correspondingly we have the
unitary conjugations \begin{equation}\label{dbar.15} \begin{split}
    \overline{\partial }_\Phi &=e^{-\Phi /\h} \h\overline{\partial
    }e^{\Phi /\h},\\
    \overline{\partial }_\Phi^{*} &=e^{-\Phi /\h}
    (\h\overline{\partial })^{\Phi ,*}e^{\Phi
      /\h}, \end{split} \end{equation} where the exponent $(\Phi ,*)$
indicates the adjoint for the $L^2_\Phi $ norms.  Note that the last
relation gives,
\[
(\h \overline{\partial })^{\Phi ,*}=e^{\Phi /\h}\overline{\partial
}_\Phi ^* e^{-\Phi /\h}=e^{2\Phi /\h}(\h \overline{\partial })^{\Phi
  ,*} e^{-2\Phi /\h},
\]
which is easy to show directly.  \par Also by unitarity,
\[
\Box_{\Phi }:=(\h\overline{\partial })^{\Phi ,*}\h \overline{\partial
}+\h \overline{\partial }(\h\overline{\partial })^{\Phi ,*}
\]
fulfills
\begin{equation}\label{dbar.16}\Box_\Phi =e^{\Phi
    /\h}\widetilde{\Box}_\Phi e^{-\Phi /h}, 
\end{equation} hence
\[
(\Box^{(1)}_\Phi)^{-1} =e^{\Phi /\h}(\widetilde{\Box}_\Phi^{(1)})^{-1}
e^{-\Phi /h}.
\]
By unitarity and (\ref{dbar.14}) the orthogonal projection
\[
\Pi :\, L^2_\Phi (\CM^n)\to L^2_\Phi (\CM^n)\cap {\cal N}(\h
\overline{\partial })
\]
is given by
\begin{equation}\label{dbar.17}
  \Pi =e^{\Phi /\h }\widetilde{\Pi }e^{-\Phi /\h }=1-
  (\h\overline{\partial })^{\Phi,*} (\Box_\Phi
  ^{(1)})^{-1}\h\overline{\partial } .
\end{equation}

\par In line with the unitary relations (\ref{dbar.15}),
(\ref{dbar.16}) we have
\begin{equation}\label{dbar.18}
  \begin{split}
    Y_j:&=e^{\Phi /\h}Z_je^{-\Phi /\h}=h\partial _{\bar{x}_j},\\
    Y_j^{\Phi ,*}&=e^{\Phi /\h}Z_j^*e^{-\Phi /\h}=-h\partial
    _{x_j}+2\partial _x\Phi
  \end{split}
\end{equation} 
and the continuity statements (\ref{dbar.6}), (\ref{dbar.8}),
(\ref{dbar.9}) remain valid for $\Box_\Phi $ if we redefine the spaces
$H^k$ by replacing the unweighted $L^2$ norms with $L^2_\Phi $ norms
and replace $Z_j$, $Z_j^*$ with $Y_j$, $Y_j^{\Phi ,*}$.

\par If $v\in L^2_\Phi (\CM^n;\wedge^{0,1}\CM^n)$ and
$\overline{\partial }v=0$, then from (\ref{dbar.10}), (\ref{dbar.11})
and the unitary conjugations above, we see that
\[
u= (\h\overline{\partial })^{\Phi ,*}(\Box_\Phi ^{(1)})^{-1}v
\]
solves,
\begin{equation}\label{dbar.12}
  \h \overline{\partial }u=v,
\end{equation}
and
\begin{equation}\label{dbar.13}
  \| u\|_{L^2_\Phi }\le {\cal O}(\h ^{-1/2})\| v\|_{L^2_\Phi }.
\end{equation}
  
\section{Direct study of \texorpdfstring{$A$}{A} in
  \texorpdfstring{\eqref{equ:defA}}{equ:defA}}
\label{dsa}

Here we perform a more direct study of the operator $A$ in
Subsection~\ref{sec:transco}. Recall that
\[
U_{1/2}=\exp \left(\frac{i}{2\h}\h D_\theta \cdot (\h D_y-\h D_x)
\right) ,
\]
so that
\[
U_{1/2}=\mathcal{F}^{-1}\circ \exp \left(\frac{i}{2\h}\theta^* \cdot
  (y^*-x^*) \right)\circ \mathcal{ F} ,
\]
where $\mathcal{F} = \mathcal{F}_\h$ denotes the usual semiclassical
Fourier transform on $\RM^{3n}$. Hence
\[
U_{1/2}u=K*u,
\]
where
\begin{equation}\label{dsa.1}
  K=\mathcal{ F}^{-1}\left( \exp \left(\frac{i}{2\h}\theta^* \cdot 
      (y^*-x^*) \right) \right) .
\end{equation}
To compute $K$ we first diagonalize the quadratic form
$q=\theta ^*\cdot (y^*-x^*)$ by means of a real orthogonal change of
variables.  Writing $q$ as a difference of two squares and adjusting a
parameter, we find
\begin{equation}\label{dsa.2}
  \theta ^*\cdot (y^*-x^*)=\frac{1}{\sqrt{2}}(\xi _1^2-\xi _2^2),
\end{equation}
where
\[
\begin{split}
  \xi _1&=\frac{x^*}{2}-\frac{y^*}{2}-\frac{\theta ^*}{\sqrt{2}},\\
  \xi _2&=-\frac{x^*}{2}+\frac{y^*}{2}-\frac{\theta ^*}{\sqrt{2}}.
\end{split}
\]
Adding the third coordinate
\[
\xi _3=\frac{x^*}{\sqrt{2}}+\frac{y^*}{\sqrt{2}},
\]
we get
\[
\begin{pmatrix}\xi _1\\\xi _2\\\xi _3\end{pmatrix}=V\begin{pmatrix}
  x^*\\ y^*\\ \theta ^*\end{pmatrix},
\]
where
\begin{equation}\label{dsa.3}
  V=\begin{pmatrix}\frac{1}{2} & -\frac{1}{2} & -\frac{1}{\sqrt{2}}\\
    -\frac{1}{2} & \frac{1}{2} & -\frac{1}{\sqrt{2}}\\
    \frac{1}{\sqrt{2}} & \frac{1}{\sqrt{2}} & 0
  \end{pmatrix}
\end{equation}
is orthogonal with determinant 1. Thus,
\begin{equation}\label{dsa.4}
  q\circ V^{-1}(\xi )=\frac{1}{\sqrt{2}}(\xi _1^2-\xi _2^2).
\end{equation}
Let $x_1,x_2,x_3$ be the coordinates on
$\RM^n_x\times \RM^n_y\times \RM^n_\theta $ that are dual to
$\xi _1,\xi _2,\xi _3$.  In these coordinates,
\[
K=\mathcal{F}^{-1}\left( \exp \frac{i}{2\h}\frac{1}{\sqrt{2}}\left(
    \xi _1^2-\xi _2^2 \right) \right) = \frac{1}{(\pi \h)^n}\exp
\left( -\frac{i}{\h\sqrt{2}} \left(x_1^2-x_2^2 \right) \right)\delta
(x_3).
\]
In order to get $K$ in the coordinates $(x,y,\theta )$ we perform the
dual change of variables,
\[
\begin{pmatrix} x \\ y\\
  \theta \end{pmatrix}=V^\mathrm{t}\begin{pmatrix}x_1 \\ x_2 \\
  x_3\end{pmatrix},
\]
or equivalently,
\[
\begin{pmatrix}x_1 \\ x_2 \\ x_3\end{pmatrix}=V
\begin{pmatrix} x \\ y\\
  \theta \end{pmatrix},
\]
since $V$ is orthogonal; $(V^\mathrm{t})^{-1}=V$. More explicitly,
\[
\begin{split}
  x_1&=\frac{x}{2}-\frac{y}{2}-\frac{\theta }{\sqrt{2}},\\
  x_2&=-\frac{x}{2}+\frac{y}{2}-\frac{\theta }{\sqrt{2}},\\
  x_3&=\frac{x}{\sqrt{2}}+\frac{y}{\sqrt{2}}.
\end{split}
\]
Thus,
\begin{equation}\label{dsa.5}
  \begin{split}
    K=&\frac{1}{(\pi \h)^n}\times\\ &\exp \left(-\frac{i}{\h\sqrt{2}}
      \left( \left( \frac{x}{2}-\frac{y}{2}-\frac{\theta
          }{\sqrt{2}}\right)^2-\left(-\frac{x}{2}+\frac{y}{2}-\frac{\theta
          }{\sqrt{2}} \right)^2\right) \right) \delta
    \left(\frac{x+y}{\sqrt{2}}
    \right)\\
    =&\frac{1}{(\pi \h)^n} \exp \left( \frac{i}{\h}(x-y)\cdot \theta
    \right) \delta \left(\frac{x+y}{\sqrt{2}} \right).
  \end{split}
\end{equation}
Noticing that $\delta (t/\sqrt{2})=\sqrt{2}^n\delta (t)$ on $\RM^n$,
we get for $U_{1/2}u=K*u$:
\[
\begin{split}
  U_{1/2}u(x,y,\theta ) & =\frac{1}{(\pi \h)^n}\iiint
  e^{\frac{i}{\h}(x-\widetilde{x}-y+\widetilde{y})\cdot (\theta
    -\widetilde{\theta })}\delta
  \left(\frac{x-\widetilde{x}+y-\widetilde{y}}{\sqrt{2}}
  \right)u(\widetilde{x},\widetilde{y},\widetilde{\theta
  })\DD\widetilde{x}\DD\widetilde{y}\DD\widetilde{\theta }\\
  & = \left( \frac{\sqrt{2}}{\pi \h} \right)^n \iint
  e^{\frac{i}{\h}(x-\widetilde{x}-y+x-\widetilde{x}+y)\cdot (\theta
    -\widetilde{\theta })}
  u(\widetilde{x},x+y-\widetilde{x},\widetilde{\theta})
  \DD\widetilde{x}\DD\widetilde{\theta },
\end{split}
\]
\begin{equation}\label{dsa.6}
  U_{1/2}u(x,y,\theta )=\left(\frac{\sqrt{2}}{\pi \h} \right)^n\iint
  e^{\frac{2i}{\h}(x-\widetilde{x})\cdot (\theta
    -\widetilde{\theta })}
  u(\widetilde{x},x+y-\widetilde{x},\widetilde{\theta
  })\DD\widetilde{x}\DD\widetilde{\theta }.
\end{equation}

\par Recall from Proposition \ref{prop_lien_op_brg_op_weyl} that
\begin{equation}\label{dsa.7}
  \begin{split}
    \left(W ^*u \right)(x,y,\theta )&=u(x,y,w(x,y,\theta )),\\
    \left( \gamma ^*u \right)(x,\theta )&=u(x,x,\theta ),
  \end{split}
\end{equation}
and from the beginning of the proof of Proposition \ref{prop:S}, that
\begin{equation}\label{dsa.8}
  \left(\pi ^*v \right)(x,y,w)=v(x,w).
\end{equation}
This gives first that $W ^*\pi ^*u(x,y,\theta )=u(x,w(x,y,\theta ))$
and then with (\ref{dsa.6}) that
\begin{multline}
  U_{1/2}\widetilde{J}W ^*\pi ^*u(x,y,\theta )\\
  =\left(\frac{\sqrt{2}}{\pi \h} \right)^n\iint
  e^{\frac{2i}{\h}(x-\widetilde{x})\cdot (\theta -\widetilde{\theta
    })}\widetilde{J}(\widetilde{x},x+y-\widetilde{x},\widetilde{\theta
  })u(\widetilde{x},w(\widetilde{x},x+y-\widetilde{x},\widetilde{\theta
  }))\DD\widetilde{x}\DD\widetilde{\theta },
\end{multline}
and hence,
\begin{multline}\label{dsa.9}
  Au(x,\theta )=\gamma ^*U_{1/2}\widetilde{J}W ^*\pi
  ^*u(x,\theta )\\
  =\left(\frac{\sqrt{2}}{\pi \h} \right)^n\iint
  e^{\frac{2i}{\h}(x-\widetilde{x})\cdot (\theta -\widetilde{\theta
    })}\widetilde{J}(\widetilde{x},2x-\widetilde{x},\widetilde{\theta
  })u(\widetilde{x},w(\widetilde{x},2x-\widetilde{x},\widetilde{\theta
  }))\DD\widetilde{x}\DD\widetilde{\theta}
\end{multline}
In this integral, we replace the integration variable
$\widetilde{\theta }$ with
$\widetilde{w}:=w(\widetilde{x},2x-\widetilde{x},\widetilde{\theta})$,
so that
\[
\widetilde{\theta }=\theta
(\widetilde{x},2x-\widetilde{x},\widetilde{w}),\ \
\DD\widetilde{\theta} = \det \left(\frac{\partial \theta }{\partial w}
\right)(\widetilde{x},2x-\widetilde{x},\widetilde{w})
\DD\widetilde{w},
\]
and get
\begin{multline}
  \label{dsa.10}
  Au(x,\theta )=\\
  \left(\frac{\sqrt{2}}{\pi \h} \right)^n \iint
  e^{\frac{i}{\h}F(x,\theta ;\widetilde{x},\widetilde{w})}
  \widetilde{J}(\widetilde{x},2x-\widetilde{x},\theta
  (\widetilde{x},2x-\widetilde{x},\widetilde{w}))\times \\
  \det \left(\frac{\partial \theta }{\partial w} \right)
  (\widetilde{x},2x-\widetilde{x},\widetilde{w})
  u(\widetilde{x},\widetilde{w}) \DD\widetilde{x}\DD\widetilde{w},
\end{multline}
where
\begin{equation}\label{dsa.11}
  F(x,\theta ;\widetilde{x},\widetilde{w})=2(\theta -\theta
  (\widetilde{x},2x-\widetilde{x},\widetilde{w}))\cdot (x-\widetilde{x}).
\end{equation}

There are no fiber variables present in the
representation~\eqref{dsa.10} of the Fourier integral operator $A$, so
the phase generates a canonical relation
\begin{equation}\label{dsa.12}
  C_A:\ (\widetilde{x},\widetilde{w};-\partial
  _{\widetilde{x}}F,-\partial _{\widetilde{w}}F)\mapsto (x,\theta
  ;\partial _x F,\partial _\theta F).
\end{equation}
Recall from the identity after \eqref{eq_Kuranishi_trick} that
\[
\theta (x,y,\theta) =
\frac{2}{i}\psi'_x((x+y)/2,\widetilde{w})+\mathcal{O}((x-y)^2),
\]
hence
\[
\theta (\widetilde{x},2x-\widetilde{x},\widetilde{w}) =
\frac{2}{i}\psi'_x(x,\widetilde{w})+\mathcal{O}((x-\widetilde{x})^2),
\]
\begin{equation}\label{dsa.13}
  F(x,\theta ;\widetilde{x},\widetilde{w})=2\left(\theta
    -\frac{2}{i}\psi '_x(x,\widetilde{w}) \right)\cdot
  (x-\widetilde{x})+\mathcal{O}((x-\widetilde{x})^3),
\end{equation}
\begin{equation}\label{dsa.14}
  \begin{split}
    -\partial_{\widetilde{x}}F&= 2\left(\theta -\frac{2}{i}\psi
      '_x(x,\widetilde{w}) \right)+\mathcal{ O}((x-\widetilde{x})^2),\\
    -\partial _{\widetilde{w}}F&=\frac{4}{i}\psi
    ''_{\widetilde{w},x}(x,\widetilde{w})(x-\widetilde{x})+\mathcal{
      O}((x-\widetilde{x})^3)\\
    \partial _xF&=2\left(\theta -\frac{2}{i}\psi
      '_x(x,\widetilde{w})\right) -\frac{4}{i}\psi
    ''_{x,x}(x,\widetilde{w})(x-\widetilde{x})
    +\mathcal{ O}((x-\widetilde{x})^2)\\
    \partial _\theta F&=2(x-\widetilde{x})+\mathcal{
      O}((x-\widetilde{x})^3),
  \end{split}
\end{equation}
\begin{equation}\label{dsa.15}
  F''_{x,\theta
    ;\widetilde{x},\widetilde{w}}=\begin{pmatrix}F''_{x,\widetilde{x}}
    &F''_{x,\widetilde{w}}\\
    F''_{\theta ,\widetilde{w}} &F''_{\theta ,\widetilde{w}}\end{pmatrix}
  =\begin{pmatrix}
    F''_{x,\widetilde{x}} &-\frac{4}{i}\psi ''_{x,\widetilde{w}}+\mathcal{
      O}(x-\widetilde{x})\\
    -2+\mathcal{ O}((x-\widetilde{x})^2) &\mathcal{ O}((x-\widetilde{x})^3)
  \end{pmatrix} .
\end{equation}

\par Recall that $\psi ''_{x,\widetilde{w}}=\Phi ''_{x,\bar{x}}$ when
$\widetilde{w}=\bar{x}$, so
$F''_{x,\theta ;\widetilde{x},\widetilde{w}}$ is invertible when
$\widetilde{w}-\bar{x}$ and $x-\widetilde{x}$ are small. In this
region, $C_A$ is therefore equal to the graph of a canonical
transformation locally. Still when $|x-\widetilde{x}|$ is small, we
have the equivalences
\[
\begin{cases}
  \partial _{\widetilde{x}}F=0,\\ \partial
  _{\widetilde{w}}F=0 \end{cases} \Longleftrightarrow
\begin{cases}x-\widetilde{x}=0,\\ \theta =\frac{2}{i}\psi
  '_x(x,\widetilde{w})\end{cases} \Longleftrightarrow
\begin{cases}\partial _xF=0,\\ \partial _\theta F=0\end{cases} ,
\]
so $C_A$ maps the zero-section $\widetilde{x}^*=\widetilde{w}^*=0$ to
the zero-section $x^*=\theta ^*=0$.

\par In particular, if we restrict the attention to a neighborhood of
a point given by
\[
x=\widetilde{x}=x_0,\ \widetilde{w}=\bar{x}_0,\ \theta
=\frac{2}{i}\partial _x\Phi (x_0),\
\widetilde{x}^*=\widetilde{w}^*=x^*=\theta ^*=0,
\]
we see that
\begin{equation}\label{dsa.16}
  C_A:\ (x_0,\bar{x}_0;0,0)\mapsto (x_0,(2/i)\partial _x\Phi (x_0);0,0)
\end{equation}
and that in a neighborhood of this point $C_A$ coincides with the
graph of a canonical transformation which maps the zero section over a
neighborhood of $(x_0,\bar{x}_0)$ to the zero section over a
neighborhood of $(x_0,(2/i)\partial _x\Phi (x_0))$.

\bibliographystyle{abbrv} \bibliography{biblio}

\begin{thebibliography}{10}

\bibitem{bargmann}
V.~Bargmann.
\newblock On a {H}ilbert space of analytic functions and an associated integral
  transform {I}.
\newblock {\em Comm. Pure Appl. Math.}, 19:187--214, 1961.

\bibitem{berezin75}
F.~A. Berezin.
\newblock General concept of quantization.
\newblock {\em Comm. Math. Phys.}, 40:153--174, 1975.

\bibitem{berman12}
R.~J. Berman.
\newblock Sharp asymptotics for {T}oeplitz determinants and convergence towards
  the {G}aussian free field on {R}iemann surfaces.
\newblock {\em Int. Math. Res. Not. IMRN}, 22:5031--5062, 2012.

\bibitem{b-b-sj-08}
R.~J. Berman, B.~Berndtsson, and J.~Sj\"ostrand.
\newblock A direct approach to {B}ergman kernel asymptotics for positive line
  bundles.
\newblock {\em Ark. Mat.}, 46(2):197--217, 2008.

\bibitem{berndtsson03}
B.~Berndtsson.
\newblock Bergman kernels related to {H}ermitian line bundles over compact
  complex manifolds.
\newblock In {\em Explorations in complex and {R}iemannian geometry}, volume
  332 of {\em Contemp. Math.}, pages 1--17. Amer. Math. Soc., Providence, RI,
  2003.

\bibitem{berndtsson10}
B.~Berndtsson.
\newblock An introduction to things {$\overline\partial$}.
\newblock In {\em Analytic and algebraic geometry}, volume~17 of {\em IAS/Park
  City Math. Ser.}, pages 7--76. Amer. Math. Soc., Providence, RI, 2010.

\bibitem{bouche90}
T.~Bouche.
\newblock Convergence de la m\'etrique de {F}ubini-{S}tudy d'un fibr\'e
  lin\'eaire positif.
\newblock {\em Ann. Inst. Fourier (Grenoble)}, 40(1):117--130, 1990.

\bibitem{BG}
L.~Boutet~de Monvel and V.~Guillemin.
\newblock {\em The spectral theory of {T}oeplitz operators}.
\newblock Number~99 in Annals of Mathematics Studies. Princeton university
  press, 1981.

\bibitem{boutet-kree-67}
L.~Boutet~de Monvel and P.~Kr\'ee.
\newblock Pseudo-differential operators and {G}evrey classes.
\newblock {\em Ann. Inst. Fourier (Grenoble)}, 17(fasc. 1):295--323, 1967.

\bibitem{boutet-sjostrand76}
L.~Boutet~de Monvel and J.~Sj\"ostrand.
\newblock Sur la singularit\'e des noyaux de {B}ergman et de {S}zegö.
\newblock {\em Astérisque}, 34-35:123--164, 1976.

\bibitem{bros-iagolnitzer}
J.~Bros and D.~Iagolnitzer.
\newblock Tubo\"\i des et structure analytique des distributions. {II}.
  {S}upport essentiel et structure analytique des distributions.
\newblock {\em S\'eminaire {G}oulaouic-{L}ions-{S}chwartz}, 18, 1974--1975.

\bibitem{catlin99}
D.~Catlin.
\newblock The {B}ergman kernel and a theorem of {T}ian.
\newblock In {\em Analysis and geometry in several complex variables ({K}atata,
  1997)}, Trends Math., pages 1--23. Birkh\"auser Boston, Boston, MA, 1999.

\bibitem{charles-toeplitz}
L.~Charles.
\newblock Berezin-{T}oeplitz operators, a semi-classical approach.
\newblock {\em Commun. Math. Phys.}, 239(1-2):1--28, 2003.

\bibitem{Charles2016}
L.~Charles.
\newblock Quantization of compact symplectic manifolds.
\newblock {\em The Journal of Geometric Analysis}, 26(4):2664--2710, Oct 2016.
\newblock See also \texttt{arxiv:1409.8507} (2017).

\bibitem{charles2019analytic}
L.~Charles.
\newblock Analytic {B}erezin-{T}oeplitz operators.
\newblock \texttt{arXiv:1912.06819}, 2019.

\bibitem{christ-13b}
M.~Christ.
\newblock Off-diagonal decay of {B}ergman kernels: On a question of {Z}elditch.
\newblock In M.~Hitrik, D.~Tamarkin, B.~Tsygan, and S.~Zelditch, editors, {\em
  Algebraic and Analytic Microlocal Analysis}, pages 459--481. Springer, 2018.

\bibitem{christ-13}
M.~Christ.
\newblock Upper bounds for {B}ergman kernels associated to positive line
  bundles with smooth hermitian metrics.
\newblock In M.~Hitrik, D.~Tamarkin, B.~Tsygan, and S.~Zelditch, editors, {\em
  Algebraic and Analytic Microlocal Analysis}, pages 437--457. Springer, 2018.

\bibitem{co-hi-sjo-18}
L.~Coburn, M.~Hitrik, and J.~Sjöstrand.
\newblock Positivity, complex {FIO}s, and {T}oeplitz operators.
\newblock \texttt{arXiv:1807.00922}.

\bibitem{deleporte-const-curv18}
A.~Deleporte.
\newblock The bergman kernel in constant curvature.
\newblock Preprint \texttt{hal-01956282}.

\bibitem{deleporte18}
A.~Deleporte.
\newblock Toeplitz operators with analytic symbols.
\newblock Preprint \texttt{hal-01957594}.

\bibitem{demailly-l2-notes}
J.-P. Demailly.
\newblock {$L^2$} estimates for the $\bar\partial$ operator on complex
  manifolds.
\newblock Summer School in Complex Analysis, Institut Fourier,
  \url{https://www-fourier.ujf-grenoble.fr/~demailly/manuscripts/estimations_l2.pdf},
  1996.

\bibitem{dimassi-sjostrand}
M.~Dimassi and J.~Sj{\"o}strand.
\newblock {\em Spectral asymptotics in the semi-classical limit}, volume 268 of
  {\em London Mathematical Society Lecture Note Series}.
\newblock Cambridge University Press, Cambridge, 1999.

\bibitem{Donaldson96}
S.~K. Donaldson.
\newblock Symplectic submanifolds and almost-complex geometry.
\newblock {\em J. Differential Geom.}, 44(4):666--705, 1996.

\bibitem{duistermaat-oif}
J.~J. Duistermaat.
\newblock {\em Fourier Integral Operators}.
\newblock Progress in mathematics. Birkh{ä}user, 1996.

\bibitem{fefferman74}
C.~Fefferman.
\newblock The {B}ergman kernel and biholomorphic mappings of pseudoconvex
  domains.
\newblock {\em Invent. Math.}, 26:1--65, 1974.

\bibitem{bonth-raym-vns19}
Y.~Guedes~Bonthonneau, N.~Raymond, and S.~{V\~u Ng\d oc}.
\newblock Exponential localization in {2D} pure magnetic wells.
\newblock Preprint hal-02319849, 2019.

\bibitem{HeSj85}
B.~Helffer and J.~Sj\"ostrand.
\newblock Puits multiples en limite semi-classique. {II}. {I}nteraction
  mol\'eculaire. {S}ym\'etries. {P}erturbation.
\newblock {\em Ann. Inst. H. Poincar\'e Phys. Th\'eor.}, 42(2):127--212, 1985.

\bibitem{hezari-lu-xu17}
H.~Hezari, Z.~Lu, and H.~Xu.
\newblock Off-diagonal asymptotic properties of {B}ergman kernels associated to
  analytic {K}ähler potentials.
\newblock {\em IMRN}, \url{https://doi.org/10.1093/imrn/rny081}, 2018.

\bibitem{hezari-xu18}
H.~Hezari and H.~Xu.
\newblock Quantitative upper bounds for {B}ergman kernels associated to smooth
  kähler potentials.
\newblock \texttt{arXiv:1807.00204}.

\bibitem{hezari2019property}
H.~Hezari and H.~Xu.
\newblock On a property of {B}ergman kernels when the {K}ähler potential is
  analytic.
\newblock \texttt{arXiv:1912.11478}, 2019.

\bibitem{hitr-mant-sjoe17}
M.~Hitrik, A.~Mantile, and J.~Sjöstrand.
\newblock Adiabatic evolution and shape resonances.
\newblock Preprint \texttt{arXiv:1711.07583}, 2017.

\bibitem{h-sj-04}
M.~Hitrik and J.~Sj{\"o}strand.
\newblock Non-selfadjoint perturbations of selfadjoint operators in 2
  dimensions. {I}.
\newblock {\em Ann. Henri Poincar\'e}, 5(1):1--73, 2004.

\bibitem{h-sj-minicours}
M.~Hitrik and J.~Sj{\"o}strand.
\newblock Two minicourses on analytic microlocal analysis.
\newblock In M.~Hitrik, D.~Tamarkin, B.~Tsygan, and S.~Zelditch, editors, {\em
  Algebraic and Analytic Microlocal Analysis}, pages 483--540. Springer, 2018.

\bibitem{h-sj-vn-07}
M.~Hitrik, J.~Sj{\"o}strand, and S.~{V\~u Ng\d oc}.
\newblock Diophantine tori and spectral asymptotics for non-selfadjoint
  operators.
\newblock {\em Amer. J. Math.}, 169(1):105--182, 2007.

\bibitem{hormander65}
L.~H\"ormander.
\newblock {$L^{2}$} estimates and existence theorems for the {$\bar \partial
  $}\ operator.
\newblock {\em Acta Math.}, 113:89--152, 1965.

\bibitem{FIO1}
L.~H{\"o}rmander.
\newblock Fourier integral operators {I}.
\newblock {\em Acta Math.}, 127:79--183, 1971.

\bibitem{kashiwara77}
M.~Kashiwara.
\newblock Analyse micro-locale du noyau de {B}ergman.
\newblock {\em S\'eminaire {G}oulaouic-{S}chwartz (1976/1977), \'Equations aux
  d\'eriv\'ees partielles et analyse fonctionnelle, {E}xp. {N}o. 8}, pages
  1--10, 1977.

\bibitem{kordyukov18}
Y.~A. Kordyukov.
\newblock On asymptotic expansions of generalized {B}ergman kernels on
  symplectic manifolds.
\newblock {\em Algebra i Analiz}, 30(2):163--187, 2018.

\bibitem{lefloch-book}
Y.~Le~Floch.
\newblock {\em A brief introduction to {B}erezin-{T}oeplitz operators on
  compact {K}\"{a}hler manifolds}.
\newblock CRM Short Courses. Springer, Cham, 2018.

\bibitem{ma-marinescu}
X.~Ma and G.~Marinescu.
\newblock {\em Holomorphic {M}orse inequalities and {B}ergman kernels}, volume
  254 of {\em Progress in Mathematics}.
\newblock Birkh\"auser Verlag, Basel, 2007.

\bibitem{martinez-book}
A.~Martinez.
\newblock {\em An introduction to semiclassical and microlocal analysis}.
\newblock Universitext. Springer-Verlag, New York, 2002.

\bibitem{melin-sjostrand-non}
A.~Melin and J.~Sjöstrand.
\newblock {B}ohr-{S}ommerfeld quantization conditions for non-selfadjoint
  operators in dimension 2.
\newblock {\em Ast{\'e}risque}, 284(284):181--244, 2003.

\bibitem{Phong-Sturm07}
D.~H. Phong and J.~Sturm.
\newblock Lectures on stability and constant scalar curvature.
\newblock In {\em Current developments in mathematics, 2007}, pages 101--176.
  Int. Press, Somerville, MA, 2009.

\bibitem{rouby-17}
O.~Rouby.
\newblock {B}ohr-{S}ommerfeld quantization conditions for non-selfadjoint
  perturbations of selfadjoint operators in dimension one.
\newblock {\em Int. Math. Res. Not.}, 2017.

\bibitem{Simon80}
B.~Simon.
\newblock The classical limit of quantum partition functions.
\newblock {\em Comm. Math. Phys.}, 71(3):247--276, 1980.

\bibitem{sj-asterisque-82}
J.~Sj{\"o}strand.
\newblock {\em Singularit\'es analytiques microlocales}, volume~95 of {\em
  Ast\'erisque}.
\newblock Soc. Math. France, Paris, 1982.

\bibitem{sj-96}
J.~Sj{\"o}strand.
\newblock Function spaces associated to global {$I$}-{L}agrangian manifolds.
\newblock In {\em Structure of solutions of differential equations
  ({K}atata/{K}yoto, 1995)}, pages 369--423. World Sci. Publ., River Edge, NJ,
  1996.

\bibitem{Song-Zelditch10}
J.~Song and S.~Zelditch.
\newblock Bergman metrics and geodesics in the space of {K}\"{a}hler metrics on
  toric varieties.
\newblock {\em Anal. PDE}, 3(3):295--358, 2010.

\bibitem{tian90}
G.~Tian.
\newblock On a set of polarized {K}\"ahler metrics on algebraic manifolds.
\newblock {\em J. Differential Geom.}, 32(1):99--130, 1990.

\bibitem{zelditch98}
S.~Zelditch.
\newblock Szegö kernels and a theorem of {T}ian.
\newblock {\em Internat. Math. Res. Notices}, 6:317--331, 1998.

\bibitem{Zelditch-Zhou19}
S.~Zelditch and P.~Zhou.
\newblock Central limit theorem for spectral partial {B}ergman kernels.
\newblock {\em Geom. Topol.}, 23(4):1961--2004, 2019.

\bibitem{zworski-book-12}
M.~Zworski.
\newblock {\em Semiclassical analysis}, volume 138 of {\em Graduate Studies in
  Mathematics}.
\newblock American Mathematical Society, Providence, RI, 2012.

\end{thebibliography}

\end{document}